\newtheorem{thm}{Theorem}
\newtheorem{lem}{Lemma}
\newtheorem{cor}{Corollary}
\newtheorem{defi}{Definition}
\newtheorem{con}{Conjecture}
\title{Spaces of random plane triangulations and the density of states}
\author{Nathan Hannon}
\begin{document}
\maketitle

\section{Introduction}

An important tool for studying tilings is the tiling space.  \cite{sadun} The set of all tilings of $\mathbb{R}^n$ forms a topological space under a metric defined in such a way that two tilings are close if and only if, after a small translation, they agree on a large ball around the origin.  The tiling space of a tiling $T$, also called the hull of $T$, is the closure in this space of the set of all translates of $T$.  Another, perhaps more intuitive, characterization of the hull of $T$ is the set of all tilings $T'$ such that every finite patch of $T'$ can be found, up to translation, in $T$.  There are variations on this construction that also take rotations into account.  The spectral properties, particularly the integrated density of states (IDS), of operators on tiling spaces are of interest as they are related to physical properties of solids modeled by those tilings.  These properties have been determined in several cases, such as by Julien and Savinien \cite{julien}.

In a more general context in which a metric space is equipped with a group action (which includes tiling spaces with the action of translation), Lenz and Veseli\'{c} \cite{lenz2007} determined that the IDS of a class of operators can be approximated uniformly by analogues constructed on finite sets, and that jumps of the IDS correspond to compactly supported eigenfunctions of those operators.  In a different but related setting, Beckus and Pogorzelski \cite{beckus2020} proved that the density of states of a random operator on a Delone dynamical system is continuous with respect to the system (under appropriately defined topologies).

Other results have dealt with groupoid structures.  For example, Lenz et al.\,\cite{lenz2002} constructed a von Neumann algebra, trace, and density of states in a setting involving random operators on a groupoid.  Additionally, Beckus et al. \cite{beckus2018} proved that the spectra of certain operators on subsets of a groupoid are continuous, with respect to suitable topologies, as a function of the subset.  Gap-labeling conjectures for some cases have been proven by Benameur and Mathai \cite{benameur2018, benameur2020} and by Kaminker and Putnam \cite{kaminker2003}.

We want to generalize the tiling space construction to tiling-like structures that do not live in $\mathbb{R}^n$ - in this case, random triangulations of 2-manifolds, although the construction could easily work with any sufficiently well-behaved cell complex.  Although most of the triangulations that we will consider are homeomorphic to $\mathbb{R}^2$, they have no notion of translation or any useful group action, but can be given a groupoid structure.  Our goal is to prove results analogous to Lenz and Veseli\'{c} \cite{lenz2007} for these spaces.

Triangulation spaces can be given a discrete or continuous structure.  The continuous space is a foliated space as constructed by Moore and Schochet \cite{moore}, and the discrete space is a transversal of that space.  Although our work uses both of these structures, our results will focus primarily on the discrete space, since it is generally easier to work with and has the same large-scale properties.

Because we are modeling random triangulations, our results involve measures, which can be approximated by sequences of measures on spheres or on a single leaf.  In particular, approximating via spheres gives a way to approximate the IDS via computing eigenvalues on finite spaces.

Fabila Carrasco et al.\,\cite{fabila} studied a discrete magnetic Laplacian on graphs, and we are interested in studying similar operators in our setting:
\[ \Delta_{\mathrm{disc}} f(x_1, z_1) = \frac{1}{w(x_1)} \sum_{y_1 \in N(x_1)} \bar{w}(x_1, y_1) \left( f(y_1, z_1) - f(x_1, z_1) \right) \]
and
\[ \Delta_{\mathrm{disc}, V, B} f(x, z) = \frac{1}{w(x_1)} \sum_{y_1 \in N(x_1)} \bar{w}(x_1, y_1) \left( e^{i \alpha(x_1, y_1)} f(y_1, z_1) \right) + (V(x_1) - 1) f(x_1, z_1), \]
where $w$ and $\bar{w}$ are weight functions on edges and vertices, respectively, and $\alpha$ and $V$ are data related to the magnetic field and potential.  Although our results are formulated with these operators in mind, they apply to a large class of operators.

Our major results are as follows.  In these results, $\hat{G}$ is a groupoid consisting of twice-marked triangulations, $\nu$ is a transverse measure on $\hat{G}$, and $W^*(\hat{G})$ is a von Neumann algebra that we will define on $\hat{G}$ using the measure $\nu$.

\textbf{Theorem \ref{big1}}. \emph{Suppose that $\nu$ is such that, for a.e. $x$, $G^x$ is recurrent and $\nu^x = \nu$.  If $\nu(x) > 0$ for some (equivalently, a.e.) $x$, then $W^*(\hat{G})$ is a hyperfinite type $I_{|\hat{G}_0|}$ factor.  Otherwise, $W^*(\hat{G})$ is a hyperfinite type $II_1$ factor.}

\textbf{Theorem \ref{big2}}. \emph{If $H$ has finite hopping range, and $\nu'_n$ is a sequence of measures converging to $\nu$, then we can obtain the density of states $\kappa_H(\phi)$ as a limit of the leafwise local trace} $\mathrm{Tr} (\phi(H))$ \emph{averaged with respect to $\nu'_n$; that is,}
\[ \kappa_H(\phi) = \lim_{n \to \infty} \mathrm{Tr} \left(\nu'_n(\hat{G}^A) \phi(H) \right). \]

\textbf{Theorem \ref{big3}}. \emph{Suppose that $\mathcal{D}$ is discrete, $\hat{G}$ is ergodic, and that $H \in W^*(\hat{G})$ is $G$-invariant and has finite hopping range.  Suppose also that $t \in \mathbb{R}$.  The following are equivalent:}

\begin{enumerate}
	\item \emph{The density of states $\kappa_H(t) > 0$.}
	\item \emph{For some $x$, $H|_{\hat{G}^x}$ has an eigenfunction with eigenvalue $t$ supported on some finite patch $A$ with $\nu(G^A) > 0$. }
	\item \emph{For almost all $x$, $\ker (H_{\hat{G}^x} - \lambda I)$ is nontrivial and spanned by compactly supported eigenfunctions.}
\end{enumerate}

\section{Notation}

\begin{description}
\item[$A_r$] the $r$th interior of the discrete decorated finite patch $A$
\item[$C^*_r(G), C^*_r(\hat{G}$)] the reduced $C^*$-algebra associated with the groupoid $G$ or $\hat{G}$
\item[$d_D$] the distance on the decoration space
\item[$D$] the diagonal function on $\hat{G}$
\item[$\mathcal{D}$] the decoration space
\item[$g_{A, T}$] the inclusion map of a decorated (discrete or continuous) finite patch $A$ into the triangulation $T$
\item[$G$] the holonomy groupoid: the set of triangulations with two marked tangent vectors, up to automorphisms
\item[$G_0$] the set of units in the holonomy groupoid: the set of triangulations with a marked tangent vector, up to automorphisms
\item[$\hat{G}, \hat{G}_0$] the discrete holonomy groupoid, and its set of units
\item[$G_0^A, G^A, \hat{G}_0^A, \hat{G}^A$] the set of (discrete, continuous) triangulations containing the (marked, twice-marked) finite patch $A$
\item[$N(x)$] the set of neighbors of a vertex $x$
\item[$S(T)$] the discrete unit tangent bundle of $T$
\item[$S(|T|)$] the unit tangent bundle of $|T|$
\item[$|T|$] the smooth geometric realization of $T$
\item[$\mathcal{T}$] the set of abstract simplicial complexes
\item[$W^*(G), W^*(\hat{G}$)] the von Neumann algebra associated with the groupoid $G$ or $\hat{G}$
\item[$x_A$] the marked tangent vector on the (discrete or continuous) finite patch $A$
\item[$x_1, x_2$] the tail and head of the discrete tangent vector $x$
\item[$\delta$] the metric defined on $G_0$
\item[$\kappa_H$] the density of states associated with the operator $H \in W^*(\hat{G})$ 
\item[$\nu$] the transverse measure on $G_0$
\item[$\tau$] the trace on $W^*(G)$ or $W^*(\hat{G})$
\item[$\psi_{A, B}$] the coordinate patch associated with the patch $A$ and ball $B$
\item[$\Omega_A$] the set of permissible decorations on the decorated finite patch $A$

\end{description}

\section{Triangulation spaces}

\subsection{The space of pointed triangulations}

\begin{defi}[Triangulation; geometric realization]
	By a triangulation we mean an abstract simplicial complex $T'$ of dimension 2.  The sets of vertices, edges, directed edges, and faces of $T'$ will be denoted $V(T')$, $E(T')$ $\overrightarrow{E}(T')$, and $F(T')$.  The geometric realization of $|T'|$ of $T'$ is a metric space formed by assigning to each vertex $v$, edge $e$, and face $f$ a Euclidean simplex $|v|, |e|, |f|$ of the same dimension with marked points corresponding to vertices, respecting inclusion so that $|v|$ is identified with the corresponding point in $|e|$ if $v$ is a vertex of $e$, and $|e|$ is identified with the corresponding segment in $|f|$ if $e$ is an edge of $f$.  The usual metric on $|T'|$ is simply the Euclidean metric on each simplex, joined by shortest paths.
\end{defi}

Fix an integer $d > 6$ and a compact metric space $\mathcal{D}$.  Let $d_D$ denote the metric on $\mathcal{D}$.

\begin{defi}[Decorated triangulation]
	Let $\mathcal{T}$ be the set of all ordered pairs $T = (T', D_T)$, where $T'$ is a triangulation satisfying:
	\begin{itemize}
	\item each vertex of $T'$ has degree at most $d$, and
	\item the geometric realization $|T'|$ of $T'$ is a surface of genus 0,
	\end{itemize}
	and where $D_T: \overrightarrow{E}(T') \to \mathcal{D}$.  That is, a decorated triangulation $T$ consists of a triangulation $T'$ with decorations $D$.  (These decorations are analogous to what in the study of tilings are called markings; we call them decorations because we have a different use in mind for the term ``markings''.)  Any simplicial properties of $T'$ will be regarded as properties of $T$; e.g., we will define $V(T) = V(T')$.  We require isomorphisms between triangulations to preserve decorations: $T$ is isomorphic to $U$ if and only if there is a simplicial isomorphism $\phi: T' \to U'$ such that $D_{U}(\phi(e)) = D_{T}(e)$ for all $e \in \overrightarrow{E}(T_1)$.  Although we have defined decorations on edges, we could also speak of decorations on vertices, for example by considering triangulations where some components of $D$ are required to be equal for all edges emanating from a vertex.
	\end{defi}

Next we will define a geometric structure on such a triangulation.  There are many possible ways to define such a structure, including some that take decorations into account.  For our purposes, the following structure will suffice.

\begin{defi}[The smooth geometric realization of a triangulation]
	Let $T \in \mathcal{T}$, and let $|T|$ denote its geometric realization.  Denote by $d_v$ the degree of $v$ for each vertex $v$ of $T$.  For each directed edge $(v, w)$ of $T$, map $B(0, 2/3) \subset \mathbb{R}^2$ to $B(v, 2/3) \subset |T|$ by preserving the distance from 0 or $v$, and subdividing the unit circle into $d_v$ equal intervals and mapping the $k$th interval linearly to the angles on the $k$th face counterclockwise from $w$.  These maps form charts of $|T|$.  Let $\langle \cdot, \cdot \rangle_v$ be the Euclidean inner product on $B(0, 2/3)$ in the aforementioned chart, and let $\langle \cdot, \cdot \rangle_{pl}$ be the Euclidean inner product on the usual piecewise linear structure of $|T|$.  We observe that $\langle \cdot, \cdot \rangle_v$ does not depend on the choice of the second vertex $w$, and that $\langle \cdot, \cdot \rangle_{pl}$ is defined on all points except for vertices.  Hence we can define a Riemannian metric
	\[ \langle \cdot, \cdot \rangle = j(r) \langle \cdot, \cdot \rangle_v + (1 - j(r)) \langle \cdot, \cdot \rangle_{pl}, \]
	where $j$ is a smooth function with $j(0) = 1$ and $j(r) = 0$ for $r \geq \frac{1}{3}$, and $r$ is the distance from the point $x$ to the nearest vertex.
\end{defi}

Let $S(|T|)$ denote the unit tangent bundle of $|T|$, with the local product metric given by its structure as an $S^1$-fiber bundle.

We note that the isometry class of a face $|F|$ depends only on the degrees of the vertices of $F$.

For a triangulation $T \in \mathcal{T}$, we will denote by $\mathrm{Aut}(T)$ the group of simplicial automorphisms of $T$ (which is trivial in most cases).  The group $\mathrm{Aut}(T)$ acts naturally on $T$, $|T|$, and $S(|T|)$.

We are also interested in the discrete sphere bundle of a triangulation.

\begin{defi}[Discrete tangent vector; the discrete sphere bundle of a triangulation]
	Let $T$ be a triangulation.  A discrete tangent vector of $T$ is an ordered edge $(x_1, x_2)$ in $T$.  The discrete sphere bundle of $T$, denoted $S(T)$, is the set of discrete tangent vectors of $T$ equipped with the metric $d((x_1, x_2), (y_1, y_2)) = max(d(x_1, x_2), d(y_1, y_2))$.  Thus two distinct discrete tangent vectors $(x_1, x_2)$ and $(y_1, y_2)$ are adjacent if and only if $x_1$ and $y_1$ are either equal or adjacent, and $x_2$ and $y_2$ are either equal or adjacent.  We will often abbreviate, for example, $(x_1, x_2)$ to $x$.
\end{defi}

We can embed $S(T)$ in $S(|T|)$ by mapping to each $(x_1, x_2)$ the tangent vector at $|x_1|$ along the edge $|(x_1, x_2)|$.

Let $G_0$ be the set of pairs $(T, [x])$ where $[x]$ is an orbit of the action of $\mathrm{Aut}(T)$ on $S(|T|)$.  Since automorphisms are rare, we will usually think of such an orbit as a single point and write $(T, x)$ or simply $x$ if the context is clear.  When we define the holonomy groupoid later on, $G_0$ will be the set of units in that groupoid.

\subsection{Topology on the space of pointed triangulations}

Our next task is to define a topology on $G_0$.  Loosely speaking, this topology is defined by a metric in which points that are close together on the same triangulation to be close, and points on different triangulations are close if those triangulations agree on a large radius up to a small change in decorations.

\begin{defi}[Nearly decoration-preserving isometry]
	Let $T$ and $U$ be triangulations with $A \subset S(|T|)$.  If $\phi: A \to S(|U|)$ is an isometry such that, for every discrete tangent vector $x \in A$, $\phi(x)$ is a discrete tangent vector in $U$ with $d_D(D_T(x), D_U(\phi(x))) < \epsilon$, we say that $\phi$ is an $\epsilon$-nearly decoration-preserving isometry (henceforth $\epsilon$-NDPI).
\end{defi}

\begin{defi}[Asymmetric distance in $G_0$]
	Suppose that $(T, [x]), (U, [y]) \in \mathcal{T}$.  For each $x' \in [x]$, $r_\phi \geq 0$, and $\phi: \overline{B}_{r_\phi}(x') \to S(|U|)$ an $\epsilon_\phi$-NDPI, let $d_\phi$ be the distance from $\phi(x')$ to $[y]$.  We define 
	\[ \hat{\delta}((T, [x]), (U, [y])) = \inf \max(e^{-r_\phi}, ed_\phi, \epsilon_\phi), \]
	where the infimum is over all $x'$, $r_\phi$, and $d_\phi$ satisfying these conditions.
	\end{defi}

\begin{lem}
	The function $\hat{\delta}$ defined this way is nondegenerate; that is,
	\[ \hat{\delta}((T, [x]), (U, [y])) = 0 \]
	only if $(T, [x]) \cong (U, [y])$.
	\end{lem}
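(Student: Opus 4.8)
The plan is to extract from a vanishing value of $\hat{\delta}$ a single global nearly decoration-preserving isometry, obtained as a limit of the ones witnessing $\hat{\delta}=0$, and then to argue that such a limit must be an honest decoration-preserving isometry of the unit tangent bundles, which in turn is induced by a simplicial isomorphism. So suppose $\hat{\delta}((T,[x]),(U,[y]))=0$. Unwinding the infimum, I fix for each $n$ a representative $x_n'\in[x]$, a radius $r_n\geq 0$, and an $\epsilon_n$-NDPI $\phi_n\colon\overline{B}_{r_n}(x_n')\to S(|U|)$ with $\max(e^{-r_n},e\,d_{\phi_n},\epsilon_n)<1/n$, where $d_{\phi_n}$ is the distance from $\phi_n(x_n')$ to $[y]$; hence $r_n\to\infty$, $\epsilon_n\to 0$, and $d_{\phi_n}\to 0$. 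Since $\mathrm{Aut}(T)$ acts on $S(|T|)$ by decoration-preserving isometries, precomposing $\phi_n$ with the automorphism carrying a once-and-for-all fixed $x'\in[x]$ to $x_n'$ lets me assume $x_n'=x'$ for all $n$; choosing $y_n'\in[y]$ with $d(\phi_n(x'),y_n')\to 0$ and postcomposing with the automorphism of $U$ carrying $y_n'$ to a fixed $y'\in[y]$, I may further assume $\phi_n(x')\to y'$.

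Next I would run a compactness argument. Because the geometric realizations are built from finitely many isometry types of simplices and have vertex degrees bounded by $d$, they and their unit tangent bundles are proper geodesic spaces, and $S(|T|)=\bigcup_{R\geq 1}\overline{B}_R(x')$ since $|T|$ is connected. Each $\phi_n$ is $1$-Lipschitz, and for $r_n>R$ the image $\phi_n(\overline{B}_R(x'))$ lies in the fixed compact set $\overline{B}_{R+M}(y')$, where $M=\sup_n d(\phi_n(x'),y')<\infty$. By Arzel\`a--Ascoli and a diagonal argument over $R=1,2,3,\dots$ I pass to a subsequence along which $\phi_n$ converges uniformly on each $\overline{B}_R(x')$ to a map $\phi\colon S(|T|)\to S(|U|)$ with $\phi(x')=y'$. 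Being a pointwise limit of distance-preserving maps, $\phi$ is distance-preserving, hence a continuous injection. Moreover, for each discrete tangent vector $e$ of $T$ the points $\phi_n(e)$ lie in the uniformly discrete, closed subset $S(U)\subset S(|U|)$ and converge to $\phi(e)$, so they are eventually equal to $\phi(e)$ and $\phi(e)\in S(U)$; since $d_D(D_T(e),D_U(\phi_n(e)))<\epsilon_n\to 0$, in fact $D_U(\phi(e))=D_T(e)$. Thus $\phi$ restricts to an exactly decoration-preserving map $S(T)\to S(U)$.

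It then remains to see that $\phi$ is onto and comes from a simplicial isomorphism. The spaces $S(|T|)$ and $S(|U|)$ are $3$-manifolds without boundary (unit tangent bundles of the genus-$0$ surfaces $|T|$, $|U|$), and $S(|U|)$ is connected; since $\phi$ is a continuous injection between $3$-manifolds, invariance of domain shows that $\phi(S(|T|))$ is open, while completeness of $S(|T|)$ shows it is also closed, so $\phi$ is a surjective isometry $S(|T|)\to S(|U|)$. Finally, I would invoke the explicit construction of the smooth geometric realization: the vertex charts together with the fact that the isometry type of a face depends only on its vertex degrees allow the vertices, edges, and faces of $T$ to be recovered from $S(|T|)$, so that a decoration-preserving isometry $S(|T|)\to S(|U|)$ carrying $S(T)$ onto $S(U)$ is induced by a decoration-preserving simplicial isomorphism $T'\to U'$. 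Such an isomorphism carries $x'\in[x]$ to $y'\in[y]$, whence $(T,[x])\cong(U,[y])$.

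The normalization and the Arzel\`a--Ascoli extraction are routine. I expect the main obstacle to be the concluding step: establishing surjectivity of the limiting embedding cleanly --- this is where the genus-$0$ hypothesis really enters, guaranteeing that the ambient tangent bundles are connected $3$-manifolds so that invariance of domain applies --- and, more substantially, verifying that a metric isometry of these smoothed realizations is forced to be simplicial and decoration-preserving at the combinatorial level, i.e.\ that the decorated combinatorial data of a triangulation in $\mathcal{T}$ is recoverable from $S(|T|)$ equipped with the distinguished subset $S(T)$ and the function $D_T$.
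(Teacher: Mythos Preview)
Your argument is correct and follows the same overall skeleton as the paper's proof---extract a sequence of NDPI's witnessing $\hat\delta=0$, normalize by automorphisms, pass to a limiting global isometry, and check decorations---but your convergence mechanism differs. The paper bypasses Arzel\`a--Ascoli entirely: since each $\phi_n$ is an isometry on a ball of radius $>n$, it commutes with the Riemannian exponential, so writing $z=\exp_x(v)$ one has $\phi_n(z)=\exp_{\phi_n(x)}(v)$, and continuity of $\exp$ in the basepoint gives $\phi_n(z)\to\exp_y(v)$ directly for the full sequence, with no subsequence extraction needed. Your compactness route is heavier but perfectly sound, and in fact you are more careful than the paper on two points it leaves implicit: you supply a surjectivity argument (invariance of domain plus connectedness) and you spell out why a metric isometry of the smoothed tangent bundles must be induced by a decoration-preserving simplicial isomorphism, whereas the paper simply asserts that the limit ``is a well-defined isometry from $S(|T|)$ to $S(|U|)$'' and stops. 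The exponential-map approach buys brevity; yours buys explicitness about exactly where the genus-$0$ and bounded-degree hypotheses enter.
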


\begin{proof}
If $\hat{\delta}((T, [x]), (U, [y])) = 0$, then there exist $x'$ and $\phi$ as in the definition with arbitrarily large $r_\phi$ and arbitrarily small $d_\phi$ and $\epsilon_{\phi}$.  By composing with automorphisms if necessary, we may assume without loss of generality that $x' = x$ and that $d(\phi(x), y)$ is arbitrarily small.  Let $\phi_n$ be such that $r_{\phi_n} > n$ and $d(\phi_n(x), y) < 1/n$.  Each $z \in S(|T|)$ can be written as $\exp_x(v)$ for some $v \in \mathbb{R}^3$.  For sufficiently large $n$, we have $\phi_n(B_{|v|}(x)) \in B_n(y)$. Then $\phi_n(z) = \exp_{\phi_n(x)}(v)$ and $\phi_n(z) \to \exp_y(v)$.  This means that
\[ \phi := \lim_{n \to \infty} \phi_n, \]
where the limit is taken pointwise, is a well-defined isometry from $S(|T|)$ to $S(|U|)$ that sends $x$ to $y$.  Furthermore, if $z$ is a discrete tangent vector in $T$, then $d_D(D_T(z), D_U(\phi_n(z))) < \epsilon$ for every $\epsilon > 0$ because some $\phi_n$ is $\epsilon$-nearly decoration-preserving, and hence $D_T(z) = D_U(\phi(z))$.  
\end{proof}

\begin{lem}
	The function $\hat{\delta}$ defined this way satisfies the triangle inequality:
	\[ \hat{\delta}((T, [x]), (V, [z])) \leq \hat{\delta}((T, [x]), (U, [y])) + \hat{\delta}((U, [y]), (V, [z])). \]
\end{lem}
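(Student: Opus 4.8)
The plan is to prove the triangle inequality by composing near-optimal nearly decoration-preserving isometries (henceforth NDPIs). Fix $\epsilon>0$ and write $a=\hat\delta((T,[x]),(U,[y]))$ and $b=\hat\delta((U,[y]),(V,[z]))$. By definition of the two infima there are a representative $x'\in[x]$, a radius $r_\phi\ge 0$, and an $\epsilon_\phi$-NDPI $\phi\colon\overline B_{r_\phi}(x')\to S(|U|)$ with $\max(e^{-r_\phi},e\,d_\phi,\epsilon_\phi)<a+\epsilon$, where $d_\phi=d(\phi(x'),[y])$, and likewise a representative $y''\in[y]$, a radius $r_\psi\ge 0$, and an $\epsilon_\psi$-NDPI $\psi\colon\overline B_{r_\psi}(y'')\to S(|V|)$ with $\max(e^{-r_\psi},e\,d_\psi,\epsilon_\psi)<b+\epsilon$, where $d_\psi=d(\psi(y''),[z])$. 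First I align base points: choose $y'\in[y]$ with $d(\phi(x'),y')\le d_\phi$ (if the distance to the orbit is not attained, enlarge $d_\phi$ by $\epsilon$, which affects nothing below) and an automorphism $\sigma$ of $U$ with $\sigma(y'')=y'$; since automorphisms are decoration-preserving isometries, replacing $\psi$ by $\psi\circ\sigma^{-1}$ lets me assume $\psi$ is an $\epsilon_\psi$-NDPI defined on $\overline B_{r_\psi}(y')$ with $d(\psi(y'),[z])=d_\psi$.

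In the main case $d_\phi\le r_\psi$, put $\rho=\min(r_\phi,\,r_\psi-d_\phi)\ge 0$ and $\Phi=(\psi\circ\phi)|_{\overline B_\rho(x')}$. For $p\in\overline B_\rho(x')$, $d(\phi(p),y')\le d(p,x')+d(\phi(x'),y')\le\rho+d_\phi\le r_\psi$, so $\phi(\overline B_\rho(x'))$ lies in the domain of $\psi$; hence $\Phi$ is well defined and, being a composite of distance-preserving maps, an isometry onto its image. If $w$ is a discrete tangent vector in $\overline B_\rho(x')$, then $\phi(w)$ and then $\Phi(w)=\psi(\phi(w))$ are discrete tangent vectors (in $U$, resp.\ $V$), and the triangle inequality in $\mathcal{D}$ gives $d_D(D_T(w),D_V(\Phi(w)))<\epsilon_\phi+\epsilon_\psi$; thus $\Phi$ is an $(\epsilon_\phi+\epsilon_\psi)$-NDPI. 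Also $d_\Phi:=d(\Phi(x'),[z])\le d(\psi(\phi(x')),\psi(y'))+d(\psi(y'),[z])=d(\phi(x'),y')+d_\psi\le d_\phi+d_\psi$. Feeding $\Phi$ into the definition of $\hat\delta$ and using $e^{-\rho}=\max(e^{-r_\phi},e^{d_\phi}e^{-r_\psi})$ and $d_\phi<(a+\epsilon)/e$,
\[
\hat\delta((T,[x]),(V,[z]))\le\max\bigl(e^{-r_\phi},\,e^{d_\phi}e^{-r_\psi},\,e\,d_\Phi,\,\epsilon_\Phi\bigr)<\max\bigl(a+\epsilon,\;e^{(a+\epsilon)/e}(b+\epsilon),\;a+b+2\epsilon\bigr).
\]
Letting $\epsilon\to 0$, the middle term tends to $e^{a/e}b$, and since $b(e^{a/e}-1)\le a$ for $a,b\in[0,1]$ (from $b\le 1$, the convexity estimate $e^{a/e}-1\le(a/e)e^{a/e}$, and $e^{a/e}\le e$), the right-hand side is $\le a+b$; hence $\hat\delta((T,[x]),(V,[z]))\le a+b$.

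It remains the degenerate case $d_\phi>r_\psi$, in which the composed ball would have negative radius. Here $e^{-r_\psi}<b+\epsilon$ and $d_\phi<(a+\epsilon)/e$ give $-\ln(b+\epsilon)<r_\psi<d_\phi<(a+\epsilon)/e$, so $b+\epsilon>e^{-(a+\epsilon)/e}$; letting $\epsilon\to 0$, $b\ge e^{-a/e}$ and therefore $a+b\ge a+e^{-a/e}\ge 1$ (the function $t\mapsto t+e^{-t/e}$ increases from the value $1$ at $t=0$). One then checks directly, by taking $r_\phi=0$ and inspecting the possible representatives of $[x]$, that $\hat\delta((T,[x]),(V,[z]))\le 1$ (after the harmless normalization $\mathrm{diam}\,\mathcal{D}\le 1$), so $\hat\delta\le 1\le a+b$ here as well. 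The two points I expect to need care are this degenerate case and the radius estimate in the main case: the factor $e^{d_\phi}$ appearing in $e^{-\rho}$ is exactly what forces the elementary inequality $e^{a/e}b\le a+b$, so it is essential that $d_\phi$ is controlled by $(a+\epsilon)/e$ rather than merely by $a+\epsilon$.
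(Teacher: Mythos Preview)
Your proof is correct and follows the same strategy as the paper: pick near-optimal NDPIs $\phi,\psi$, align the base points via an automorphism of $U$, compose, and bound the three parameters of the composite. The only substantive difference is in the radius estimate. The paper bounds $e^{d_\phi-r_\psi}\le e\,d_\phi+e^{-r_\psi}$ in one step (using $d_\phi<1$ and $e^t-1\le et$ on $[0,1]$), which immediately gives $e^{-\rho}\le a+b$; you instead pass to the limit first, obtaining the term $e^{a/e}b$, and then invoke the equivalent inequality $b(e^{a/e}-1)\le a$. Both routes rest on the same elementary estimate, so there is no real gain or loss---the paper's version is just a line shorter.

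Two small remarks. First, you use $a,b\in[0,1]$ in the main case before having established any a~priori bound on $\hat\delta$; the paper front-loads this (``$\hat\delta\le e$, hence we may restrict to $d_\phi<1$''), and you should likewise state and justify your bound $\hat\delta\le 1$ at the outset rather than sketch it only in the degenerate-case paragraph. Second, your explicit treatment of the case $d_\phi>r_\psi$ (where the composed radius would be negative) is a genuine, if minor, improvement in completeness over the paper, which tacitly assumes the composite ball is nonempty.
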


\begin{proof}
We can always take $r_\phi$ to be $0$ and $\phi: \{x\} \to \{y\}$ to be the map sending $x$ to $y$.  Hence
\[ \hat{\delta}((T, [x]), (U, [y])) \leq e, \]
and we can restrict our attention to maps with $d_\phi < 1$.

If $(T, [x]), (U, [y]), (V, [z]) \in \mathcal{T}$, and we have a pair of maps $\phi_1: \overline{B}_{r_{\phi_1}}(x') \to S(|U|)$ and $\phi_2: \overline{B}_{r_{\phi_2}}(y') \to S(|V|)$, we can compose them as follows.  By composing with an action of $\mathrm{Aut}(U)$, we can assume without loss of generality that the distance from $\phi_1(x')$ to $y'$ is at most $d_{\phi_1}$.  We can then define
\[ r_\phi = \min(r_{\phi_1}, r_{\phi_2} - d_{\phi_1}), \]
\[ \epsilon_\phi = \epsilon_{\phi_1} + \epsilon_{\phi_2}, \]
and
\[ \phi =\phi_2 \circ \phi_1. \]
If $w \in \overline{B}_{r_{\phi_1}}(x')$  is a discrete tangent vector, we have
\begin{eqnarray*}
d_D(D_T(w), D_V(\phi(w))) & \leq & d_D(D_T(w), D_U(\phi_1(w))) + d_D(D_U(\phi_1(w)), D_V(\phi(w))) \\
& \leq & \epsilon_{\phi_1} + \epsilon_{\phi_2} \\
& = & \epsilon_\phi,
\end{eqnarray*}
so $\phi$ is $\epsilon_\phi$-nearly decoration-preserving. Then
\begin{eqnarray*}
ed_\phi & \leq & ed_{\phi_1} + ed_{\phi_2} \\
& \leq & \left( \hat{\delta}((T, [x]), (U, [y])) + \hat{\delta}((U, [y]), (V, [z])) \right)
\end{eqnarray*}
and
\begin{eqnarray*}
e^{-r_\phi} & \leq & \max(e^{-r_{\phi_1}}, e^{d_{\phi_1} - r_{\phi_2}}) \\
& \leq & \max(e^{-r_{\phi_1}}, ed_{\phi_1} + e^{-r_{\phi_2}}) \\
& \leq & \hat{\delta}((T, [x]), (U, [y])) + \hat{\delta}((U, [y]), (V, [z])),
\end{eqnarray*}
where the second inequality comes from the fact that $d_{\phi_1} < 1$ and
\[ \frac{d}{dt} e^t < e \]
for $t < 1$.
\end{proof}

\begin{defi}[Distance in $G_0$]
	We define
	\[ \delta((T, [x]), (U, [y])) = \hat{\delta}((T, [x]), (U, [y])) + \hat{\delta}((U, [y]), (T, [x])). \]
\end{defi}

\begin{thm}
	The function $\delta$ defined in this way is a metric.
\end{thm}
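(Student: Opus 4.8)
The plan is to verify the three metric axioms for $\delta$ directly, using the two lemmas already established for the asymmetric function $\hat{\delta}$. Symmetry is immediate from the definition $\delta((T,[x]),(U,[y])) = \hat{\delta}((T,[x]),(U,[y])) + \hat{\delta}((U,[y]),(T,[x]))$, since swapping the two arguments merely interchanges the two summands. For the triangle inequality, I would add the two instances of the triangle inequality for $\hat{\delta}$ (one for each order of the arguments) proved in the previous lemma: since
\[ \hat{\delta}((T,[x]),(V,[z])) \leq \hat{\delta}((T,[x]),(U,[y])) + \hat{\delta}((U,[y]),(V,[z])) \]
and
\[ \hat{\delta}((V,[z]),(T,[x])) \leq \hat{\delta}((V,[z]),(U,[y])) + \hat{\delta}((U,[y]),(T,[x])), \]
summing gives $\delta((T,[x]),(V,[z])) \leq \delta((T,[x]),(U,[y])) + \delta((U,[y]),(V,[z]))$.

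For nondegeneracy, first note $\delta \geq 0$ since it is a sum of infima of maxima of nonnegative quantities, and $\delta((T,[x]),(T,[x])) = 0$ by taking the identity isometry on arbitrarily large balls with $\epsilon_\phi = 0$ and $d_\phi = 0$. Conversely, if $\delta((T,[x]),(U,[y])) = 0$ then both $\hat{\delta}((T,[x]),(U,[y])) = 0$ and $\hat{\delta}((U,[y]),(T,[x])) = 0$ (each being nonnegative), and the first nondegeneracy lemma already gives $(T,[x]) \cong (U,[y])$ from either one. So $\delta$ descends to a genuine metric on isomorphism classes, which is exactly how the points of $G_0$ are defined.

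The work is essentially bookkeeping: the two substantive facts (nondegeneracy of $\hat{\delta}$ and the triangle inequality for $\hat{\delta}$) are already in hand, and symmetrizing an asymmetric ``distance'' by summing it with its transpose is a standard device that automatically produces a metric provided the asymmetric version already satisfies the oriented triangle inequality and the nondegeneracy condition. The one point that deserves a sentence of care is that $\hat{\delta}$ itself need not be symmetric — a small-radius isometry from $T$ into $U$ need not come with one in the other direction — so the symmetrization is genuinely needed rather than cosmetic; I would remark on this explicitly. There is no real obstacle here; the only thing to watch is that all quantities entering the infima are nonnegative so that vanishing of $\delta$ forces vanishing of each $\hat{\delta}$ summand separately.
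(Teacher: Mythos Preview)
Your proposal is correct and follows essentially the same approach as the paper: derive symmetry directly from the definition, and inherit nondegeneracy and the triangle inequality for $\delta$ from the two lemmas already proved for $\hat{\delta}$. The paper's proof is just a terse two-sentence version of exactly what you wrote out in detail.
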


\begin{proof}
	Since $\hat{\delta}$ is nondegenerate and satisfies the triangle inequality, so does $\delta$.  Furthermore, $\delta$ is symmetric by definition. 
\end{proof}

\begin{defi}[The triangulation topology]
	The triangulation topology is the topology induced by the metric $\delta$.
\end{defi}

\subsection{Foliated structure}
Next, we will give $G_0$ a foliated structure as defined by, for example, Moore and Schochet \cite{moore}.  Foliated spaces differ from classical foliations in that the total space is not required to be a manifold; instead, a foliated space locally looks like the product of $\mathbb{R}^n$ with some model space (often, as in this case, a Cantor-like set).  In this case our model space will be the discrete space $\hat{G}_0$ with the metric defined above.  Short distances in this metric correspond to small motions of the marked tangent vector (motion in the leafwise direction), changes in the triangulation far from the marked tangent vector (motion in the transverse direction), and small changes in decorations on the triangulation (also motion in the transverse direction).

We will start by defining a useful family of subsets of $G_0$ and $\hat{G}_0$.  These will be used to define the foliated structure, but, more importantly, will be a basis of the $\sigma$-algebra upon which our transverse measure is defined.

\begin{defi}[Decorated finite patch]
	By a decorated discrete finite patch we mean an ordered triple $(A, x, \Omega)$ where $A$ is a finite triangulation, $x$ is a marked discrete tangent vector of $A$, and $\Omega$ is a Borel subset of $\prod_{E(A)} \mathcal{D}$ (identified with the space of functions from $E(A)$ to $\mathcal{D}$).  Generally we will refer to such a patch simply as $A$ and write $x_A$ and $\Omega_A$.  We similarly define a decorated continuous finite patch as an ordered triple where $A$ and $\Omega$ are as above and $x \in S(|A|)$.  We call $\Omega$ the set of permissible decorations on $A$. If $|\Omega| = 1$ we say the finite patch is exact.  Likewise, we define a twice-marked decorated (discrete or continuous) finite patch as an ordered quadruple $(A, x, y, \Omega)$ where $x$ and $y$ are marked (discrete or continuous) tangent vectors.
	\end{defi}

\begin{defi}[Sets of triangulations with a particular decorated finite patch]
	Let $A$ be a decorated discrete finite patch.  We define $\hat{G}_0^A \subset \hat{G}_0$ to be the set of triangulations with a marked discrete tangent vector $(T, [x])$ such that there exists $g_{A, T}: A \to T$ with the properties that:
	\begin{itemize}
		\item $g_{A, T}$ is a simplicial embedding;
		\item $g_{A, T}(x_A) = x$;
		\item the function $x \mapsto D_T(g_{A, T}(x)) \in \Omega_A$.
	\end{itemize}
	For any particular $A$ and $T$, if such a $g_{A, T}$ exists, it is unique up to isomorphism (or choice of $x \in [x]$).  We analogously define $\hat{G}^A \subset \hat{G}$ if $A$ is a twice-marked decorated discrete finite patch, $G_0^A \subset G_0$ if $A$ is a decorated continuous finite patch, or $G^A \subset G$ if $A$ is a twice-marked decorated continuous finite patch.
\end{defi}

\begin{defi}[Coordinate patches of $G_0$]
	Let $A$ be a decorated continuous finite patch.  Let $B: S \to A$ be any smooth, but not necessarily isometric, embedding of a region $S \subset \mathbb{R}^3$ into $A$.  We define $\psi_{A, B}: \hat{G}_0^A \times B$ to $G_0$ by
	\[ \psi_{A, B}(T, s) = (T, [g_{A, T}(B(s))]), \]
	and call it the coordinate patch associated with the patch $A$ and region $B$.
\end{defi}

We can think of such a patch as a triangulation that is fixed in the region $A$ and allowed to vary outside of $A$ and whose decorations are allowed to vary within $\Omega_A$, with a marked tangent vector that is allowed to move within the subset $B$.

\begin{lem}
	The space $G_0$ equipped with the coordinate patches $\psi_A$ is a foliated space.
\end{lem}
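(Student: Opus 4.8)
The plan is to verify the defining conditions of a foliated space in the sense of Moore–Schochet: we must exhibit an open cover of $G_0$ by the coordinate patches $\psi_{A,B}$, show that each is a homeomorphism onto an open subset of $G_0$, and show that the transition maps between overlapping patches are continuous and locally of the form (leafwise diffeomorphism) $\times$ (homeomorphism of the transversal), so that the leaves — the path components of the ``$s$-direction'' — piece together into $2$-manifolds (here they are copies of $S(|T|)$, which is a $3$-manifold, so ``$\mathbb{R}^n$'' is $\mathbb{R}^3$) while the transverse model is the discrete space $\hat G_0$.

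First I would check that each $\psi_{A,B}$ is injective and continuous, with continuous inverse onto its image. Injectivity follows from the uniqueness (up to automorphism) of the embedding $g_{A,T}$ noted after the definition of $\hat G_0^A$: if $\psi_{A,B}(T,s) = \psi_{A,B}(U,s')$ then the germ of the triangulation near the marked vector determines $T$ and $U$ together with the embedded copy of $A$, hence $T \cong U$ as decorated triangulations, and then $g_{A,T}(B(s)) = g_{A,U}(B(s'))$ in $S(|T|)$ forces $s = s'$ since $B$ is an embedding. For continuity I would unwind the metric $\delta$: moving $s$ a small amount moves the marked tangent vector a small leafwise distance inside the fixed patch $A$ (so $d_\phi$ small, $\epsilon_\phi = 0$, $r_\phi$ large), while moving $T$ a small amount in $\hat G_0$ means $T$ and $T'$ agree on a large combinatorial ball around $A$ up to a small decoration change, which transfers directly to a large $r_\phi$ and small $\epsilon_\phi$ for the realized tangent vectors. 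The inverse is continuous because, conversely, two points of $G_0$ that are $\delta$-close and both lie in the image of $\psi_{A,B}$ have a large common radius of agreement, which pins down the $\hat G_0$-coordinate to high precision and the $B$-coordinate to within the corresponding leafwise displacement. That $\mathrm{im}\,\psi_{A,B}$ is open is essentially the same estimate run in reverse: any triangulation sufficiently $\delta$-close to one containing $A$ also contains $A$ (with nearby decorations), so it too lies in a slightly shrunken patch.

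Next I would show the patches cover $G_0$: given $(T,[x])$, choose a small metric ball $B$ around $x$ in $S(|T|)$ on which the exponential chart is a diffeomorphism, let $A$ be the (exact, or small-$\Omega$) finite patch consisting of all simplices of $T$ within combinatorial distance, say, $2$ of the support of $B$, with marked vector $x_A$ a discrete tangent vector near $x$; then $(T,[x]) \in \mathrm{im}\,\psi_{A,B}$ by construction. Finally, the transition maps: on an overlap $\psi_{A,B}(\hat G_0^A \times B) \cap \psi_{A',B'}(\hat G_0^{A'} \times B')$, a point determines a triangulation $T$ containing both $A$ and $A'$ near the marked vector, so the change of coordinates sends $(T,s)$ to $(T, (B')^{-1}(\text{the }A'\text{-realization of }B(s)))$ — that is, it is the identity on the transverse ($\hat G_0$) factor up to the canonical identification of the two patches of admissible triangulations, and a smooth map $(B')^{-1}\circ(\text{change of geometric chart})\circ B$ on the leafwise $\mathbb{R}^3$ factor, which is smooth because the two smooth geometric realizations glue via the smooth charts of the definition of $|T|$. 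The main obstacle, and the place I would spend the most care, is the continuity of $\psi_{A,B}^{-1}$ and the openness of the image: here one has to argue uniformly that the asymmetric distance $\hat\delta$ controls the combinatorial radius of agreement in \emph{both} directions, including handling the automorphism orbits $[x]$ (so that a near-isometry realizing small $\hat\delta$ can be adjusted by an element of $\mathrm{Aut}$ to actually carry the marked vector to the marked vector) and the bounded-degree hypothesis $d > 6$, which guarantees the relevant balls are finite and the patches genuinely clopen in the transversal. Everything else is bookkeeping with the definition of $\delta$ and of the smooth geometric realization.
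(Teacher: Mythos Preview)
Your proposal is correct and follows the same direct-verification approach as the paper. The paper's own proof is far terser --- two sentences noting that $\psi_{A,B}$ is injective (because $B$ meets each $\mathrm{Aut}(T)$-orbit in $S(|T|)$ at most once) and that $\psi_{A',B'}^{-1}\psi_{A,B}$ is smooth (from the manifold structure of $S(|T|)$) --- leaving the covering, continuity, and openness checks that you spell out entirely implicit.
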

\begin{proof}
Since $B$ in the definition contains at most one point in any $\mathrm{Aut}(T)$ orbit of $S(|T|)$ for any $T$ containing $A$, the coordinate patch $\psi_A$ is one-to-one.  It follows from the manifold structure of $S(|T|)$ that $\psi_{A', B'}^{-1}\psi_{A, B}$ is smooth.  
\end{proof}

\subsection{Holonomy groupoid}

Our next objective is to construct $G$, the holonomy groupoid of $G_0$.  The precise construction of the holonomy groupoid is given in Moore and Schochet \cite{moore}.

Briefly, a groupoid may be thought of as a small category in which all morphisms are invertible.  It consists of a groupoid $G$ with a space of units $G_0$, a diagonal map $\Delta: G_0 \to G$, an inversion map $\cdot^{-1}$ on $G$, range and source maps $r, s: G \to G_0$, and an associative multiplication on pairs $(u, v)$ where $r(v) = s(u)$.  These must satisfy $\Delta r(x) = \Delta s(x) = x$, $u \cdot \Delta s(u) = \Delta r(u) \cdot u = u$, $r(u^{-1}) = s(u)$, and $u u^{-1} = \Delta r(u)$.  We denote $G_x = s^{-1}(x)$ and $G^x = r^{-1}(x)$.

One simple example of a groupoid is an equivalence relation $\sim$, where $(x, y) \in G$ iff $x \sim y$, and we have $r(x, y) = x$, $s(x, y) = y$, $(x, y)^{-1} = (y, x)$, and $(x, y)(y, z) = (x, z)$.  In fact, we will see that our holonomy groupoid is such a groupoid in the absence of symmetry.

In general the holonomy groupoid is constructed as follows.  By a plaque we mean a connected component of $\ell \cap U$ where $\ell$ is a leaf and $U$ is an open set.  Given a path $\gamma$ from $x$ to $y$, and  transversals $N_x$ and $N_y$ through $x$ and $y$, respectively, we cover $\gamma$ by small open sets $V_1, \ldots, V_n$.  If the $V_i$ are sufficiently small, any plaque in $V_i$ intersects a unique plaque in $V_{i + 1}$, so that we can start with the plaque in $V_1$ containing some $x' \in N_x$ and follow the sequence of intersecting plaques to a unique plaque in $V_n$, which intersects a unique point $y'$ in $N_y$.  Hence this results in a map from a neighborhood in $N_x$ to one in $N_y$, realized by following a path from $N_x$ to $N_y$ sufficiently close to $\gamma$.  The germ of this map is independent of the choice of $V_i$, and two paths correspond to the same holonomy element if the germs of their corresponding maps on $N_x$ and $N_y$ are the same.

\begin{lem}
	The holonomy groupoid $G$ consists of $\mathrm{Aut}(T)$ orbits of pairs of points $[(x, y)]$, where $x, y \in S(|T|)$ for some $T \in \mathcal{T}$.
\end{lem}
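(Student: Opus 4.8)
The plan is to identify the leaves of the foliated space $G_0$, prove that the foliation carries no holonomy around the leafwise loops that lift to genuine loops in the tangent bundle $S(|T|)$, and then read off the description of $G$ from the Moore--Schochet construction \cite{moore} while tracking the role of $\mathrm{Aut}(T)$.

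First I would determine the leaves. In a coordinate patch $\psi_{A,B}(\hat{G}_0^A \times B)$ the plaques are the slices $s \mapsto (T', [g_{A,T'}(B(s))])$ with $T'$ fixed, so a leafwise motion keeps the triangulation and all of its decorations fixed and merely moves the marked tangent vector, whereas the transverse directions are exactly the changes of $T'$ far from the marked vector together with the small changes of decoration. Since $S(|T|)$ is connected, the leaf through $(T, [x])$ is therefore $\{(T, [x']) : x' \in S(|T|)\}$, the image of $S(|T|)$ under the $\mathrm{Aut}(T)$-action. In particular every element of $G$ has source $(T, [x])$ and range $(T, [y])$ for one and the same triangulation $T$, so it suffices to describe, for each $T$, the holonomy elements living over this single leaf.

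The crux is the following: if $\gamma$ is a leafwise path whose lift to $S(|T|)$ starting at $x$ returns to $x$, then its holonomy germ is the identity. Being compact, $\gamma$ is covered by finitely many coordinate patches $\psi_{A_i, B_i}$, and I would choose the $A_i$ so large that every $T'$ sufficiently close to $(T, [x])$ in the transversal at $x$ agrees with $T$, up to an arbitrarily small decoration change, on a neighborhood of $\gamma$; this is where the rigidity of simplicial embeddings --- the uniqueness of $g_{A, T'}$ --- enters. For such a fixed $T'$, the chain of successively intersecting plaques defining the holonomy transport lies in $S(|T'|)$ and, the patches being small, traces a path $C^0$-close to $\gamma$; pulled back to $S(|T|)$ by the canonical decoration-respecting isometry of the agreement regions it is a path close to the loop $\gamma$, hence itself a loop, so the transport sends the nearby point back to itself. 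Consequently the germ of $\gamma$ is trivial, and hence any two leafwise paths whose lifts to $S(|T|)$ share the same endpoints $x, y$ induce the same holonomy germ (they differ by such a loop); call it $h(x, y)$. Moreover, for $\phi \in \mathrm{Aut}(T)$ the paths realizing $h(\phi x, \phi y)$ and $h(x, y)$ differ by the map induced by $|\phi|$, which descends to the identity on $G_0$, so $h$ factors through $\mathrm{Aut}(T)$-orbits of pairs.

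Finally I would assemble $G$. Every leafwise path from $(T, [x])$ to $(T, [y])$ lifts to a path in $S(|T|)$ from $x$ to some $y' \in [y]$, with the only ambiguity in the lift being the diagonal $\mathrm{Aut}(T)$-action; together with the previous paragraph, $[(x,y)] \mapsto h(x,y)$ is then a well-defined surjection from $\mathrm{Aut}(T)$-orbits of pairs onto the elements of $G$ with source $(T,[x])$ and range $(T,[y])$. For injectivity I would observe that a holonomy germ remembers its endpoints up to automorphism: it is a map between transversals at $(T,[x])$ and $(T,[y])$, and evaluating it on a generic nearby triangulation (which carries no automorphism) recovers the pairing, so distinct orbits give distinct germs; this takes a short perturbation argument. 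Transporting the groupoid operations through the resulting bijection yields $r([(x,y)]) = (T,[x])$, $s([(x,y)]) = (T,[y])$, $[(x,y)]^{-1} = [(y,x)]$, and $[(x,y)]\cdot[(y,z)] = [(x,z)]$ (with the evident $\mathrm{Aut}(T)$-twist when composing representatives), which is the claimed form of $G$. The main obstacle is the no-holonomy step: it rests entirely on taking the patches $A_i$ large compared with the diameter of $\gamma$, so that the transverse data --- the far-away part of the triangulation and the decorations --- is literally carried around the loop unchanged.
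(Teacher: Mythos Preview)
Your approach is essentially the same as the paper's: both arguments hinge on covering a leafwise path $\gamma$ by coordinate patches built from a simplicial patch $A$ large enough to contain all of $\gamma$, and then using the rigidity of the inclusion $g_{A,T'}$ to see that the holonomy transport of a nearby transverse point $(T',x')$ along $\gamma$ lands at $g_{A,T'}(g_{A,T}^{-1}(y))$, which depends only on the endpoints $x,y$ and not on $\gamma$ itself. The paper does this in one stroke with a single patch $A$; you phrase the same idea as ``loops have trivial holonomy'' and use several patches $A_i$, but the content is identical.

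Where you go beyond the paper is in the injectivity direction. The paper simply asserts that there is ``a unique holonomy element for each $\mathrm{Aut}(T)$ orbit of pairs,'' having really only argued the well-definedness (surjectivity) part; it does not explain why distinct orbits $[(x,y)] \neq [(x,\psi y)]$ with $\psi \in \mathrm{Aut}(T)$ nontrivial give distinct germs. Your perturbation sketch --- evaluate the germ on a nearby $T'$ with trivial automorphism group, where the two candidate image points become genuinely distinct in $G_0$ --- is the right idea and fills a gap the paper leaves open. You should be aware that this step does need the existence of such asymmetric $T'$ arbitrarily close to $T$; when $\mathcal{D}$ is nontrivial a small decoration perturbation suffices, and when $\mathcal{D}$ is a point one must instead modify $T'$ far from the patch $A$, which is still available but worth saying explicitly.
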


\begin{proof}
The leaves of $G_0$ correspond to $S(|T|) / \mathrm{Aut}(T)$ with $T$ a triangulation.  A plaque on a leaf of $G_0$ corresponds to a small open subset of $S(|T|)$.

Suppose that $x, y \in S(|T|)$ and $\gamma$ is a path from $x$ to $y$.  We can choose a sufficiently large decorated continuous finite patch $A$ such that $x = g_{A, T}(x_A)$ and $\gamma$ is contained in the image of $g_{A, T}$.  Suppose additionally that $V_1, \ldots, V_n$ are open subsets of $S(|A|)$ such that the $g_A(V_i)$ cover $\gamma$.  

Suppose that $(T', x')$ is another triangulation with $x' = g_{A, T'}(x_A)$.   Let $y' = g_{A, T'}(g_{A, T}^{-1}(y))$; define $y'$, $\gamma'$, and $V'_i$ analogously.

Then $V'_1, \ldots, V'_n$ cover $\gamma'$, which is a path from $x'$ to $y'$.  It follows that the holonomy element corresponding to $\gamma$ sends $[x']$ to $[y']$; it depends on $x$ and $y$ but not on $\gamma$ itself.  This holds for any $(T', x') \in G_0^A$.  Hence there is a unique holonomy element for each $\mathrm{Aut}(T)$ orbits of pairs of points $[(x, y)]$ on some triangulation $T$.
\end{proof}

\begin{defi}[The discrete holonomy groupoid and its space of units]
	The holonomy groupoid $G$ contains a subgroupoid $\hat{G}$ consisting of elements of the form $[(|x|, |y|)]$ where $x$ and $y$ are discrete tangent vectors in $S(T)$ for some triangulation $T$.  We denote its set of units by $\hat{G}_0$.
\end{defi}

If $\mathrm{Aut}(T)$ is trivial, as we expect to be true in many cases, we can simply think of an element of $G$ as a triangulation with two marked tangent vectors, and an element of $\hat{G}$ as a triangulation with two marked discrete tangent vectors.  In any case, we will often suppress the brackets and write such an element $(x, y)$.

Our primary objects of interest will be $\hat{G}$ and $\hat{G}_0$.  However, we will often need to refer to $G$ and $G_0$ in order to use results about foliated spaces, as $\hat{G}_0$ does not have (for our purposes) a particularly useful foliated structure in its own right (for example, its leaves would consist of a single point).

\subsection{Transverse measure}\label{transverse}

The notion of measure needed to consider a "random triangulation" is a transverse measure.  Moore and Schochet \cite{moore} define a transversal of a groupoid $G$ as a Borel subset of its space of units $G_0$ whose intersection with each equivalence class (leaf, in this case) is countable, and a transverse measure as a measure on transversals satisfying certain properties.  We will not repeat the precise definition here, as we intend to use a simpler characterization, also from Moore and Schochet \cite{moore}: a transverse measure is equivalent to a measure on a single complete transversal (that is, a transversal that intersects every leaf).  Since $\hat{G}_0$ is such a transversal, we will simply think of our transverse measures as measures on $\hat{G}_0$.

We will construct our measures as limits of finitely supported ones, taken in the following sense:

\begin{defi}[Convergence of transverse measures]
  We say that a sequence of transverse measures $\nu_1, \nu_2, \ldots$ converges to a transverse measure $\nu$ if $\nu_k(\hat{G}_0^A) \to \nu(\hat{G}_0^A)$ for every decorated discrete finite patch $A$.
\end{defi}

There are (at least) two useful means of obtaining a measure as a limit of finitely supported measures.

\begin{defi}[Transverse measures constructed as limits of measures on spheres]\label{spheremeasure}
Let $\hat{G}_0^{S^2}$ be the subset of $\hat{G}_0$ consisting of points on finite (sphere) triangulations, and $\nu_k$ be a sequence of measures on $\hat{G}_0^{S^2}$.  We require that $\nu_k(x) = \nu_k(y)$ for $(T, x, y) \in \hat{G}$.  In other words, the restriction of each $\nu_k$ to a single sphere is a discrete uniform measure.
\end{defi}

There are many possible choices for $\nu_k$.  A few examples may be instructive:
\begin{enumerate}
\item Consider the case where $\mathcal{D}$ is trivial and $\nu_k$ is a uniform probability measure on the sphere triangulations with at most $k$ faces.  To our knowledge, it has not been proven whether or not the resulting sequence of measures converges, but in any event it is easy to show via diagonalization that a subsequence of the $\nu_k$ converges.  That is, let $A_1, A_2, \ldots$ be an enumeration of decorated finite patches that includes, for every finite patch $A$, a basis of the Borel $\sigma$-algebra on  $\prod_{E(A)} \mathcal{D}$.  Let $\nu_{0, j} = \nu_j$.  For each $k$, choose a subsequence $\nu_{k, j}$ of $\nu_{(k - 1), j}$ such that $\nu_{k, j} \to p_k$ for some $p_k \in [0, 1]$ (by compactness, such a subsequence must exist).  Then the diagonal sequence $\nu_{j, j}$ must have $\nu_{j, j}(A_k) \to p_k$ for each $k$.  Henceforth we will assume that such a sequence $\nu_k$ is fixed, and denote the limit by $\nu_u$.

\begin{con}\label{thisconjecture}
	The sequence of measures $\nu_k$ in this construction converges.
\end{con}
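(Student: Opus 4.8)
\medskip

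Conjecture~\ref{thisconjecture} is, in essence, a Benjamini--Schramm (local) convergence statement for uniform random bounded-degree sphere triangulations, and the plan would be to prove it by establishing a ratio-limit theorem for the underlying enumeration sequence. Keep $d>6$ fixed and take $\mathcal{D}$ trivial. For even $f$ write $T_f$ for the number of \emph{rooted} triangulations of the sphere with $f$ faces and all vertex degrees at most $d$, where ``rooted'' means carrying a distinguished directed edge (a sphere triangulation has an even number of faces, which is why the index is restricted). For a finite patch $A$ write $T_f^A$ for the number of such rooted triangulations into which $A$ embeds simplicially at the root. Unwinding the definition of $\nu_k$ (uniform over unrooted triangulations with at most $k$ faces, spread evenly over the $3f$ directed edges of an $f$-face triangulation) and discarding the negligible contribution of triangulations with nontrivial automorphisms, one obtains
\[ \nu_k(\hat{G}_0^A) = \frac{\sum_{f \le k} T_f^A/(3f)}{\sum_{f \le k} T_f/(3f)}, \]
both sums effectively over even $f$. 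Since $T_f$ grows exponentially in $f$ — with $d > 6$ the ensemble has positive entropy, a large triangulation having almost all vertex degrees near $6$ and hence room for $e^{\Omega(f)}$ local modifications — the partial sums are dominated by the terms with $f$ in a window of width $o(k)$ just below $k$, and an Abel/Ces\`aro argument reduces the conjecture to the single claim that $T_f^A/T_f$ converges as $f \to \infty$ through even values, for every patch $A$.

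Convergence of $T_f^A/T_f$ would in turn follow from two ingredients, both classical in the unrestricted case (the Angel--Schramm construction of the UIPT, the Uniform Infinite Planar Triangulation, where Tutte's exact enumeration makes every step explicit). First, a ratio-limit theorem for $T_f$ itself: $T_{f+2}/T_f \to \rho^{-2}$ for some $\rho > 0$. That $\lim_{f\to\infty} T_f^{1/f}$ exists is elementary (Fekete, via gluing two bounded-degree triangulated disks along their boundary after a bounded-cost local adjustment that keeps the gluing vertices of small degree); the substance of the theorem is that the ratios themselves converge, i.e.\ that $\sum_f T_f z^f$ has a single dominant singularity of a definite type and no asymptotic oscillation beyond the parity. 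Second, a surgery estimate: a rooted triangulation counted by $T_f^A$ is exactly $A$ glued to a triangulation of the complementary disk with a prescribed boundary length and boundary-degree profile $\beta$, so $T_f^A$ is a finite linear combination of the $A$-independent disk counts $D_f^{(\beta)}$, and one needs the companion ratio limits $D_f^{(\beta)}/T_f \to c^{(\beta)}$.

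The main obstacle is that the degree cap destroys the exact enumeration and the Schaeffer-type bijections that drive the unrestricted case, so the singularity analysis of $\sum_f T_f z^f$ is not handed to us. I would approach it by peeling: explore a sphere triangulation face by face, keeping track at each step of the unexplored region as a triangulated disk whose boundary records the \emph{current} degrees of its boundary vertices. These current degrees lie in a fixed finite range (a vertex with no unexplored incident face has its final degree frozen and already respects the cap), so the disk counts $D_f^{(\beta)}$ appear as entries of iterates of a single nonnegative transfer operator acting on words over a finite alphabet, and the desired ratio limits become a Perron--Frobenius / renewal statement for that operator. The delicate points, and where I expect the real work to be, are: (i) showing this operator has a spectral gap — equivalently, that the associated disk generating functions carry a unique dominant square-root singularity — despite its acting on an infinite-dimensional space of boundary words, which presumably requires a quasi-compactness argument or a Tutte-type functional equation with catalytic variables indexed by boundary profiles; (ii) returning from disks to the sphere, since peeling manufactures disks whereas the conjecture concerns closed genus-$0$ surfaces, which needs the gluing-of-two-disks argument above with matched boundary statistics; and (iii) an aperiodicity check (beyond the parity already accounted for) so as to obtain honest limits rather than merely Ces\`aro ones — though, as noted, a Ces\`aro statement would already suffice here. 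For small $d$ it may be cleaner to bypass peeling entirely and write down by hand the finite algebraic system that encodes ``all degrees at most $d$'' as a local constraint, then analyze its dominant singularity directly.
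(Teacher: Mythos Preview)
The paper does not prove this statement: it is listed as an open conjecture, and is repeated as such in the closing ``Questions'' section. There is therefore no proof in the paper to compare your attempt against. What you have written is likewise not a proof, and you are candid about that --- it is a research outline flagging where the work would lie.

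As an outline it is the natural one. The identification with Benjamini--Schramm local convergence is correct; the reduction (via exponential growth of $T_f$ and an Abel/Ces\`aro step) to convergence of the ratios $T_f^A/T_f$, and the further reduction by root-surgery to disk counts $D_f^{(\beta)}$ indexed by boundary degree-profiles, mirror the Angel--Schramm route to the UIPT. The difficulties you single out are precisely what make this a conjecture rather than an exercise. In particular, step (i) is the crux: upgrading the Fekete limit $T_f^{1/f}\to\rho^{-1}$ to a genuine ratio limit $T_{f+2}/T_f\to\rho^{-2}$ needs real control of the dominant singularity of $\sum_f T_f z^f$, and the degree cap removes the exact enumeration and Schaeffer-type bijections that make the unrestricted case tractable. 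The peeling/transfer-operator picture is the right heuristic, but the operator acts on boundary words of unbounded length, and establishing quasi-compactness or a spectral gap in that setting is a substantial analytic problem that, to my knowledge, has not been carried out for degree-constrained planar maps. Your point (ii), passing from disks back to spheres, is real but almost certainly the least of the obstacles once (i) is in hand; (iii) is a routine check. One small caution on the setup: the paper's phrase ``uniform probability measure on the sphere triangulations with at most $k$ faces'' is ambiguous between uniform-on-unrooted-then-uniform-root (your reading) and uniform-on-rooted directly; both satisfy the paper's constraint that the restriction to each sphere be uniform, and both reduce to the same ratio question, but the intermediate formula for $\nu_k(\hat G_0^A)$ differs.
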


\item Let $\mathcal{D} = \{0, 1\}$ and let $\alpha_k$ be the sphere triangulation formed by identifying the boundaries of two balls of radius $k$ on a 2-dimensional triangular grid.  Define $\nu_k$ to be the measure that decorates each edge of $\alpha_k$ independently with 0 or 1 with equal probability and chooses a marked edge uniformly, and let $\nu_f$ be the limit of the $\nu_k$.  If $A$ is a finite patch that is not a subset of a 2-dimensional triangular grid, then $\nu_f(A) = 0$ since the identified boundaries on each $\alpha_k$ have measures approaching 0.  Hence $\nu_f$ is a measure on decorated triangular grids, and $\hat{G}$ is essentially a discrete tiling space.  Without the decorations almost every leaf would be periodic and $\hat{G}_0$ would consist of a single point.  However, with the decorations almost every leaf is aperiodic.

\item We can construct spheres via a process analogous to a substitution tiling.  Let $\mathcal{D}$ be a finite set.  Then there are only finitely many possible decorated faces.  Choose a map $\beta$ that assigns to each decorated face with an ordering on the vertices $(F, a, b, c)$ a finite patch with three distinct marked points $(\beta(F), \beta(a), \beta(b), \beta(c))$ such that $\beta(F) \cong D^2$ and $\beta(a)$, $\beta(b)$, and $\beta(c)$ are on the boundary of $\beta(F)$ with no interior edges.  Let $\beta(\overline{ab})$ denote the portion of the boundary from $\beta(a)$ to $\beta(b)$ that does not pass through $\beta(c)$, and define $\beta(\overline{bc})$ and $\beta(\overline{ca})$ analogously.  We require that $\beta(\overline{ab})$ depend only on $D(a)$ and $D(b)$ and the analogous conditions hold for $\beta(\overline{bc})$ and $\beta(\overline{ca})$.  We can define $\beta$ on any triangulation by applying it to each face.  We say that $\beta$ is nondegenerate if there exists $k$ such that $\beta^k(F)$ has at least one interior face for every face $F$.  This implies that there exists $r < 1$ such that the proportion of boundary faces to total faces in $\beta^n(F)$ is at most $r^n$.  Then we can set $T_0$ to be any sphere triangulation, set $T_n = \beta^n(T_0)$, and choose $\nu_k$ to be the uniform measure on $T_k$.  Since $\beta$ is nondegenerate, almost all faces are in the interior (and eventually arbitrarily far from the boundary) of $\beta^n(F)$ for some $F$, and we can determine the limiting frequency of each patch from analyzing $\beta$, as in tiling theory.  We denote the limit by $\nu_s$.

\item Let $\theta_1$ be the finite triangulation consisting of a single vertex of degree 7, its neighboring faces, and their vertices and edges.  For $n > 0$, let $\theta_n$ be constructed from $\theta_{n - 1}$ by adding a face on every external edge, then adding additional vertices adjacent to each of the previous external vertices so that it becomes a vertex of degree 7, and connecting the newly added vertices in a cycle.  The first three iterations are shown in \ref{hyper}.

\begin{figure}
\centering
\begin{tikzpicture}
\draw (0, 2) -- (-2, 1) -- (0,0) -- (0, 2) -- (2, 1) -- (0, 0) -- (2, -1) -- (2, 1);
\draw (1, -2) -- (2, -1) -- (0, 0) -- (1, -2) -- (-1, -2) -- (0, 0) -- (-2, -1) -- (-1, -2);
\draw (-2, 1) -- (-2, -1);
\draw [red] (-2, 3) -- (0, 2) -- (-1, 4) -- (-2, 3) -- (-2, 1) -- (-3, 2) -- (-2, 3);
\draw [red] (-1, 4) -- (1, 4) -- (0, 2) -- (2, 3) -- (1, 4);
\draw [red] (3, 2) -- (2, 3) -- (2, 1) -- (3, 2) -- (4, 1) -- (2, 1) -- (4, 0) -- (4, 1);
\draw [red] (4, -1) -- (4, 0) -- (2, -1) -- (4, -1) -- (4, -2) -- (2, -1) -- (3, -3) -- (4, -2);
\draw [red] (1, -4) -- (3, -3) -- (1, -2) -- (1, -4) -- (0, -4) -- (1, -2);
\draw [red] (-1, -4) -- (-3, -3) -- (-1, -2) -- (-1, -4) -- (-0, -4) -- (-1, -2);
\draw [red] (-4, -1) -- (-4, 0) -- (-2, -1) -- (-4, -1) -- (-4, -2) -- (-2, -1) -- (-3, -3) -- (-4, -2);
\draw [red] (-4, 1) -- (-4, 0) -- (-2, 1) -- (-4, 1) -- (-3, 2);
\draw [blue] (-1, 6) -- (0, 6) -- (-1, 4) -- (-1, 6) -- (-2, 6) -- (-1, 4) -- (-3, 5) -- (-2, 6);
\draw [blue] (-3.5, 4.5) -- (-3, 5) -- (-2, 3) -- (-3.5, 4.5) -- (-4, 4) -- (-2, 3);
\draw [blue] (-4.5, 3.5) -- (-4, 4) -- (-3, 2) -- (-4.5, 3.5) -- (-5, 3) -- (-3, 2) -- (-5.5, 2.5) -- (-5, 3);
\draw [blue] (-6, 2) -- (-5.5, 2.5) -- (-4, 1) -- (-6, 2) -- (-6, 1) -- (-4, 1) -- (-6, 0.5) -- (-6, 1);
\draw [blue] (-6, 0) -- (-6, 0.5) -- (-4, 0) -- (-6, 0) -- (-6, -0.5) -- (-4, 0);
\draw [blue] (-6, -1) -- (-6, -0.5) -- (-4, -1) -- (-6, -1) -- (-6, -1.5) -- (-4, -1) -- (-6, -2) -- (-6, -1.5);
\draw [blue] (-6, -2.5) -- (-6, -2) -- (-4, -2) -- (-6, -2.5) -- (-6, -3) -- (-4, -2) -- (-5.5, -3.5) -- (-6, -3);
\draw [blue] (-4.5, -4.5) -- (-5.5, -3.5) -- (-3, -3) -- (-4.5, -4.5) -- (-3, -5.5) -- (-3, -3);
\draw [blue] (-2, -6) -- (-3, -5.5) -- (-1, -4) -- (-2, -6) -- (-1, -6) -- (-1, -4) -- (-0.5, -6) -- (-1, -6);
\draw [blue] (0, -6) -- (-0.5, -6) -- (0, -4) -- (0, -6) -- (0.5, -6) -- (0, -4);
\draw [blue] (1, 6) -- (0, 6) -- (1, 4) -- (1, 6) -- (2, 6) -- (1, 4) -- (3, 5) -- (2, 6);
\draw [blue] (3.5, 4.5) -- (3, 5) -- (2, 3) -- (3.5, 4.5) -- (4, 4) -- (2, 3);
\draw [blue] (4.5, 3.5) -- (4, 4) -- (3, 2) -- (4.5, 3.5) -- (5, 3) -- (3, 2) -- (5.5, 2.5) -- (5, 3);
\draw [blue] (6, 2) -- (5.5, 2.5) -- (4, 1) -- (6, 2) -- (6, 1) -- (4, 1) -- (6, 0.5) -- (6, 1);
\draw [blue] (6, 0) -- (6, 0.5) -- (4, 0) -- (6, 0) -- (6, -0.5) -- (4, 0);
\draw [blue] (6, -1) -- (6, -0.5) -- (4, -1) -- (6, -1) -- (6, -1.5) -- (4, -1) -- (6, -2) -- (6, -1.5);
\draw [blue] (6, -2.5) -- (6, -2) -- (4, -2) -- (6, -2.5) -- (6, -3) -- (4, -2) -- (5.5, -3.5) -- (6, -3);
\draw [blue] (4.5, -4.5) -- (5.5, -3.5) -- (3, -3) -- (4.5, -4.5) -- (3, -5.5) -- (3, -3);
\draw [blue] (2, -6) -- (3, -5.5) -- (1, -4) -- (2, -6) -- (1, -6) -- (1, -4) -- (0.5, -6) -- (1, -6);
\end{tikzpicture}
\caption{First three stages in the construction of $\nu_h$: $\theta_1$ in black, $\theta_2 \setminus \theta_1$ in red, and $\theta_3 \setminus \theta_2$ in blue.} \label{hyper}
\end{figure}
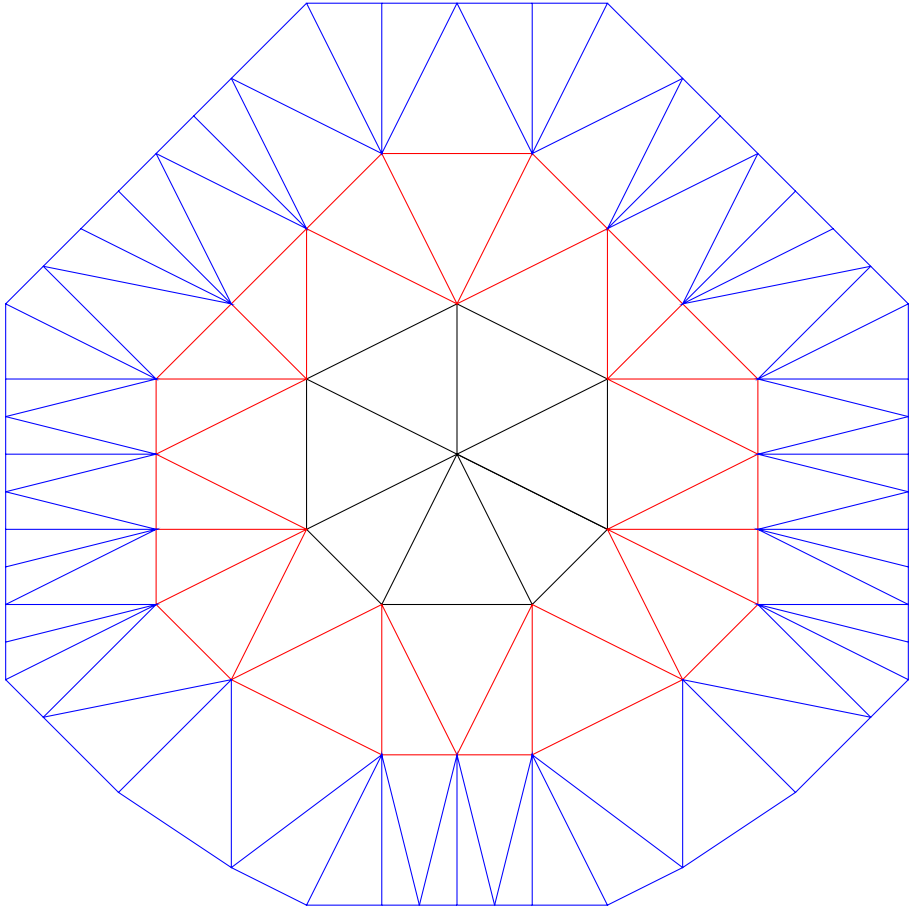

We can see that each boundary vertex has degree 3 or 4.  If $A$ denotes a vertex of degree 3 and $B$ denotes a vertex of degree 4, the boundary of $\theta_n$ is obtained from that of $\theta_{n - 1}$ via the substitution
\[ A \mapsto BAA; B \mapsto BA. \]
The largest eigenvalue of the matrix of this substitution is $\frac{3 + \sqrt{5}}{2}$, and hence the number of vertices in the boundary is asymptotically proportional to
\[ \left( \frac{3 + \sqrt{5}}{2} \right)^n; \]
in particular, it grows exponentially in $n$.  Furthermore, since $\frac{3 + \sqrt{5}}{2}$ is irrational, the limit of this substitution is aperiodic.

Now let $\Theta_k$ be the sphere triangulation formed by identifying the boundaries of two copies of $\theta_k$.  The boundaries become a cycle, which we will call the ridge, of vertices of degree 4 and 6 in a pattern given by the aforementioned substitution.  All vertices not on the ridge have degree 7, and the structure of vertices in a ball intersecting the ridge is determined by the ridge.  We define $\nu_h$ to be the limit of the measures $\nu_k$ that are uniform on the $\Theta_k$.  The proportion of vertices of degree 7 is asymptotically
\[ \frac{4}{5 + \sqrt{5}} < 1, \]
meaning that vertices of degree 4 and 6 have positive measure.  

It is not too hard to see that $\nu_h$ is well-defined.  Indeed, with some work, we could compute it for any ball of finite radius.  If the ball contains vertices of degree 4 and/or 6, and the vertices are consistent with the structure of $\Theta_n$ for large $n$, the ball's frequency is determined by the frequency of the pattern of ridge vertices (which can be determined with the usual techniques for substitution tilings).  All balls of radius $k$ with only degree 7 vertices are isomorphic, and their frequency is simply the limiting proportion of vertices that are a distance more than $k$ from the ridge.  Furthermore, because the limit of the substitution that produces the ridge is aperiodic, we need not consider automorphisms.

\end{enumerate}

Another means of constructing transverse measures uses a single leaf.  Not every leaf produces a well-defined transverse measure.  In general, it is more difficult to construct measures this way compared to constructing them on spheres.  However, this construction also leads to the idea of patch density, which will be useful in subsequent sections.

\begin{defi}[Transverse measures constructed as limits of measures on a leaf; patch density on a leaf]\label{leafmeasure}
	Let $x \in \hat{G}_0$ and let $f_1, f_2, \ldots$ be a sequence of functions on $\hat{G}^x$ such that $\int f_k \,d\lambda^x = 1$ for each $f_k$.  Often we will consider a leaf $\hat{G}^x$ that is recurrent in the sense of random walks and take $f_k$ to be the distribution of a random walk from $x$ after $k$ steps.  For a decorated discrete finite patch $A$, we define $\nu_k^x(A) = \int_{\hat{G}^{A} \cap \hat{G}^x} f_k \,d\lambda(x)$.

 	If $\nu^x(A) = \lim_{k \to \infty} \nu_k^x(A)$ exists, we call it the patch density of $A$ on $G^x$ (with respect to the exhaustion $(f_k)$).  If the exhaustion is not specified, we will assume it to be the exhaustion with respect to random walks.
\end{defi}

\section{Properties}

\subsection{Amenability}

Our goal is to study operators on $\hat{G}$ by averaging along leaves.  If $\hat{G}$ has two properties, amenability and ergodicity, such averages will be well-behaved.  Additionally, when we construct a von Neumann algebra on $\hat{G}$, these properties will be useful for classifying it.   We will first deal with amenability.

There are multiple equivalent criteria for amenability.  The most useful one to us is Reiter's criterion, which in our terminology is that there exists a sequence of functions $g_n: \hat{G} \to \mathbb{R}$ satisfying the following for $\nu$-a.e. $x$:
\begin{enumerate}
\item $\int g_n \,d\lambda_x = 1$, and
\item for all $y \in G^x$, we have $\lim_{n \to \infty} \int g_n(x, z) - g_n(y, z) \,d\lambda^x(z) = 0$.
\end{enumerate}

If almost all leaves had subexponential growth, we could simply take $g_n$ to be $\frac{1}{|B_n(x)|} \chi_{B_n(x)}(y)$.  This does not work in the case of, for example, $\nu_h$, and so we resort to an argument using random walks.

\begin{thm}
	If the groupoid $\hat{G}$ is given a transverse measure such that $G^x$ is recurrent for a.e. x, then $\hat{G}$ is amenable.
\end{thm}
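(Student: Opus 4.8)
The plan is to verify Reiter's criterion directly, taking the functions $g_n$ to be Cesàro averages of the leafwise random walk. Fix $x\in\hat G_0$, lying on a triangulation $T$, and let $q_k(x,\cdot)$ denote the distribution, with respect to $\lambda$, of the random walk on the leaf $\hat G^x$ determined by the weights $\bar w$ after $k$ steps, so that $\int q_k(x,\cdot)\,d\lambda_x = 1$. Because every vertex has degree at most $d$, each leaf has bounded geometry, so that $S(T)$, the $1$-skeleton of $T$, and $|T|$ are mutually roughly isometric; hence recurrence of $G^x$ — the hypothesis — is equivalent to recurrence of this discrete walk, and in particular implies that every hitting time on the leaf is finite almost surely. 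Set
\[ g_n(x,z) = \frac{1}{n}\sum_{k=0}^{n-1} q_k(x,z), \qquad (x,z)\in\hat G. \]
Then $\int g_n\,d\lambda_x = 1$, which is condition (1), and $g_n$ is Borel on $\hat G$ since each $q_k$ is a finite sum of products of the local ratios $\bar w/w$, which vary continuously in the triangulation topology. (One could instead take $g_n=q_n$ and invoke the $0$–$2$ law, admissible since triangular faces make the walk aperiodic, but the Cesàro choice needs only recurrence and avoids that machinery.)

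The content is condition (2): for $\nu$-a.e. $x$ and every $y\in G^x$,
\[ \int \big|g_n(x,z) - g_n(y,z)\big|\,d\lambda^x(z) \to 0. \]
Write $\mu_n^{(x)} = g_n(x,\cdot)$. Since $n\,\mu_n^{(x)}$ is the expected occupation measure of the first $n$ steps of the walk from $x$, the left-hand side equals $\|\mu_n^{(x)} - \mu_n^{(y)}\|_1$, and I bound the latter by coupling. Let $(X_k)$ be the walk from $x$ and $(\tilde Y_k)$ an independent walk from $y$; let $\tau$ be the first time $(\tilde Y_k)$ hits $x$, finite almost surely by recurrence and irreducibility of the leaf; and set $Y_k = \tilde Y_k$ for $k\le\tau$ and $Y_{\tau+j} = X_j$ for $j\ge 0$. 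Then $(Y_k)$ has the law of the walk from $y$, and comparing visit counts on $[0,\tau)$ with those on $[n-\tau,n)$ gives, on every path, $\sum_z |N_n^Y(z) - N_n^X(z)| \le 2\min(\tau,n)$. Taking expectations through the coupling,
\[ \big\|\mu_n^{(x)} - \mu_n^{(y)}\big\|_1 \;=\; \frac{1}{n}\sum_z\big|\mathbb{E}[N_n^X(z)-N_n^Y(z)]\big| \;\le\; \frac{2}{n}\,\mathbb{E}\big[\min(\tau,n)\big]\;\to\; 0 \]
by bounded convergence, since $\tfrac{1}{n}\min(\tau,n)\le 1$ for all $n$ and tends to $0$ almost surely. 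That is exactly condition (2).

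Finally the almost-everywhere statement is packaged as follows: the set of $x$ whose leaf is recurrent is Borel and leaf-saturated (recurrence is a property of the leaf, the walk being irreducible) and is $\nu$-conull by hypothesis; on it the coupling estimate runs verbatim for each $y$ in the leaf, producing the $g_n$ required by Reiter's criterion, so $\hat G$ is amenable.

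I expect the main friction to be bookkeeping rather than anything conceptual. The delicate points are (i) making precise which leafwise walk the recurrence hypothesis controls and recording the standard equivalences (bounded geometry; rough isometry of $S(T)$, the $1$-skeleton of $T$, and $|T|$; rough-isometry invariance of recurrence) so that ``recurrent'' may be used interchangeably; and (ii) checking that $x\mapsto q_k(x,\cdot)$ and the coupling can be chosen measurably across the transversal, so that the estimate holds simultaneously for $\nu$-almost every $x$. The probabilistic heart — the hitting-time splice and the $2\min(\tau,n)$ bound — is short, and recurrence enters exactly there, through $\tau<\infty$ almost surely.
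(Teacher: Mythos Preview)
Your proposal is correct and follows essentially the same route as the paper: both take $g_n$ to be the Ces\`aro average of the leafwise random walk and establish Reiter's condition (2) via the observation that, by recurrence, the walk from one basepoint hits the other in finite time, after which the two walks can be identified. The only cosmetic difference is packaging---you phrase the hitting-time argument as an explicit coupling with the clean pathwise bound $\sum_z|N_n^Y(z)-N_n^X(z)|\le 2\min(\tau,n)$ and finish with bounded convergence, whereas the paper conditions on the hitting time, splits into $\{\tau\le t\}$ and $\{\tau>t\}$, and chooses $t$ and then $n$ by hand; these are the same idea.
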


\begin{proof}
For $T$ a triangulation, $x$ a vertex of $T$, and $n$ a positive integer, let $B^x_1, B^x_2, \ldots$ denote a simple random walk on $T$ (that is, on the 1-skeleton of $T$).  To make our notation less cumbersome, we will use the shorthand $x \xrightarrow[n]{} y$ to denote $B^x_n = y$.  Let $g_n$ be the distribution of $B^x$ after a uniformly random number of steps from $1, \ldots, n$; that is,
\[ g_n(x, y) = \frac{1}{n} \sum_{k = 1}^n P(x, y, k). \]

Let $\epsilon > 0$.

Let $A_x^y$ be the first time that the walk $B^x$ hits $y$ - that is, the smallest positive integer such that $B^x_{A_x} = y$.  If $T$ is recurrent and $t \in \mathbb{N}$ is sufficiently large, then $P(A_x^y > t) < \frac{\epsilon}{2}$ and $P(A_y^x > t) < \frac{\epsilon}{2}$.  We will use $x \xrightarrow{n} y$ to denote $A_x^y = n$.

Now suppose that $n > \frac{2t}{\epsilon}$.

We have
\begin{eqnarray*}
g_n(x, z) - g_n(y, z) & = & \frac{1}{n} \sum_{k = 1}^n \left( P(x \xrightarrow[k]{} z) - P(y \xrightarrow[k]{} z) \right) \\
& = & \frac{1}{n} \sum_{m = 1}^\infty P(x \xrightarrow{m} y) \sum_{k = 1}^n \left( P(x \xrightarrow[k]{} z | x \xrightarrow{m} y) - P(y \xrightarrow[k]{} z | x \xrightarrow{m} y) \right).
\end{eqnarray*}

Our next step is to decompose this sum into two parts: one with $m \leq t$ and the other with $m > t$.  (That is, either the random walk starting at $x$ hits $y$ before time $t$, in which case the distribution of a random walk starting at $x$ is similar to one starting at $y$, or it does not hit $y$ before time $t$, which is unlikely because of our choice of $t$.)

First, we will consider the sum where $m \leq t$; that is, the random walk starting at $x$ hits $y$ before time $t$.  If $m \leq t$, we have
\begin{eqnarray*}
	& & \sum_{k = 1}^n \left( P(x \xrightarrow[k]{} z | x \xrightarrow{m} y) - P(y \xrightarrow[k]{} z | x \xrightarrow{m} y) \right) \\
	& = & \sum_{k = 1}^m P(x \xrightarrow[k]{} z | x \xrightarrow{m} y) + \sum_{k = m + 1}^n P(x \xrightarrow[k]{} z | x \xrightarrow{m} y) \\
	& & - \sum_{j = 1}^{n - m} P(y \xrightarrow[j]{} z = z | x \xrightarrow{m} y) - \sum_{j = n - m + 1}^n P(y \xrightarrow[j]{} z | x \xrightarrow{m} y) \\
	& \leq & \sum_{k = m + 1}^n P(x \xrightarrow[k]{} z | x \xrightarrow{m} y) - \sum_{j = 1}^{n - m} P(y \xrightarrow[j]{} z | x \xrightarrow{m} y) + \sum_{j = n - m + 1}^n P(x \xrightarrow[j]{} z | x \xrightarrow{m} y) \\
	& = & \sum_{j = 1}^{n - m} \left( P(x \xrightarrow[j + m]{} z | x \xrightarrow{m} y) - P(y \xrightarrow[j]{} z = z | x \xrightarrow{m} y) \right) + \sum_{k = n - m + 1}^n P(x \xrightarrow[k]{} z | x \xrightarrow{m} y).
\end{eqnarray*}
Now, if $x \xrightarrow{m} y$, then $x \xrightarrow[m]{} y$ and $B_{j + m}^x$ is a random walk of $j$ steps starting at $y$.  Hence each term in the first sum is $0$, and we are left with
\begin{eqnarray*}
\sum_{k = 1}^n \left( P(x \xrightarrow[k]{} z | x \xrightarrow{m} y) - P(y \xrightarrow[k]{} z | x \xrightarrow{m} y) \right) & \leq & \sum_{k = n - m + 1}^n P(x \xrightarrow[k]{} z | x \xrightarrow{m} y) \\
& \leq & m \\
& \leq & t \\
\sum_{m = 1}^t P(x \xrightarrow{m} y) \sum_{k = 1}^n \left( P(x \xrightarrow[k]{} z | x \xrightarrow{m} y) - P(y \xrightarrow[k]{} z | x \xrightarrow{m} y) \right) & \leq & t \sum_{m = 1}^t P(x \xrightarrow{m} y) \\
& \leq & t.
\end{eqnarray*}

Next, consider the sum with $m > t$.  We have
\begin{eqnarray*}
& & \sum_{m = t + 1}^\infty P(x \xrightarrow{m} y) \sum_{k = 1}^n \left( P(x \xrightarrow[k]{} z | x \xrightarrow{m} y) - P(y \xrightarrow[k]{} z | x \xrightarrow{m} y) \right) \\
& \leq & \sum_{m = t + 1}^\infty P(x \xrightarrow{m} y) \sum_{k = 1}^n P(x \xrightarrow[k]{} z | x \xrightarrow{m} y) \\
& \leq & \sum_{m = t + 1}^\infty n P(x \xrightarrow{m} y) \\
& = & n P(A_x^y > t) \\
& < & \frac{n \epsilon}{2}.
\end{eqnarray*}

Hence
\begin{eqnarray*}
\sum_{m = 1}^\infty P(x \xrightarrow{m} y) \sum_{k = 1}^n \left( P(x \xrightarrow[k]{} z | x \xrightarrow{m} y) - P(y \xrightarrow[k]{} z | x \xrightarrow{m} y) \right) & \leq & t + \frac{n \epsilon}{2} \\
g_n(x, z) - g_n(y, z) & \leq & \frac{1}{n} \left( t + \frac{n \epsilon}{2} \right) \\
& = & \frac{t}{n} + \frac{\epsilon}{2} \\
& \leq & \frac{\epsilon}{2} + \frac{\epsilon}{2} \\
& = & \epsilon.
\end{eqnarray*}
Exchanging $x$ and $y$ gives
\[ g_n(y, z) - g_n(x, z) < \epsilon \]
and hence
\[ |g_n(x, z) - g_n(y, z)| < \epsilon, \]
and we have shown that $\hat{G}$ is amenable.
\end{proof}

\begin{cor}
	The groupoid $\hat{G}$ with any transverse measure $\nu$ constructed as in Definition \ref{spheremeasure} is amenable.
\end{cor}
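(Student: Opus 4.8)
The plan is to deduce this from the preceding theorem: it suffices to show that a transverse measure $\nu$ built as in Definition~\ref{spheremeasure} makes $G^x$ recurrent for $\nu$-a.e.\ $x$, after which that theorem gives amenability immediately. By the proof of that theorem, recurrence of $G^x$ means recurrence of the simple random walk on the $1$-skeleton of the triangulation underlying $x$; this property is insensitive to the decorations and to the marked tangent vector, so only the law of the underlying rooted triangulation matters.

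First I would identify that law as a Benjamini--Schramm (distributional) limit of uniform random rooted finite triangulations. The requirement $\nu_k(x) = \nu_k(y)$ for $(T,x,y) \in \hat{G}$ says exactly that, given the finite sphere triangulation, the marked directed edge is uniform; that is, each $\nu_k$ is a finite graph rooted at a uniform directed edge, which is the reversible (degree-biased) rooting and so produces a unimodular random rooted graph in the limit. Rooting at a directed edge rather than a vertex is harmless here: every vertex of a triangulation of a closed surface has degree in $[3,d]$, so the uniform-vertex and uniform-directed-edge rootings are mutually absolutely continuous with density bounded above and below, and recurrence of the limit holds a.s.\ for one iff it does for the other. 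Moreover, convergence of transverse measures in the earlier sense --- $\nu_k(\hat{G}_0^A) \to \nu(\hat{G}_0^A)$ for every finite patch $A$ --- is precisely convergence of the laws of the rooted $R$-balls for all $R$, i.e.\ Benjamini--Schramm convergence. Finally, the triangulations in $\mathcal{T}$ have genus $0$, hence are planar, and all degrees are at most the fixed integer $d$, so every $\nu_k$ is supported on finite planar graphs of uniformly bounded degree.

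I would then invoke the theorem of Benjamini and Schramm that a distributional limit of finite planar graphs of uniformly bounded degree is almost surely recurrent (the uniform degree bound makes the exponential-tail hypothesis of their statement automatic, so the later strengthening of Gurel-Gurevich and Nachmias is not needed). This yields that the simple random walk on the $1$-skeleton of the triangulation of $x$ is recurrent for $\nu$-a.e.\ $x$, hence $G^x$ is recurrent for $\nu$-a.e.\ $x$, and the preceding theorem gives that $\hat{G}$ is amenable. The only real work beyond citing the recurrence input is the bookkeeping of the middle paragraph: matching ``convergence of transverse measures'' with Benjamini--Schramm convergence and reading off the unimodularity of the limit from the uniform-on-each-sphere condition. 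Once that dictionary is set up, the recurrence theorem is a black box and the conclusion is immediate.
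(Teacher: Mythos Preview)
Your invocation of Benjamini--Schramm is the same move the paper makes, and your dictionary matching convergence of transverse measures with Benjamini--Schramm convergence of rooted graphs is in fact more explicit than the paper's one-line citation. There is, however, a step you skip that the paper carries out. You write that ``recurrence of $G^x$ means recurrence of the simple random walk on the $1$-skeleton of the triangulation underlying $x$,'' but $G^x$ is not the vertex graph of $T$: it is (identified with) the discrete sphere bundle $S(T)$, whose points are directed edges $(y_1,y_2)$ and whose adjacency is the one defined earlier (both coordinates equal-or-adjacent). Benjamini--Schramm gives recurrence of the $1$-skeleton of $T$, not directly of $S(T)$, whereas the hypothesis of the preceding theorem is recurrence of $G^x$.

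The paper bridges this with the flow/energy criterion. If $S(T)$ were transient it would carry a finite-energy flow $\rho$ with a single source; pushing it down by summing over second coordinates,
\[
\rho'(y,z)=\sum_{y_2\in B_1(y)}\sum_{z_2\in B_1(z)}\rho\bigl((y,y_2),(z,z_2)\bigr),
\]
produces a flow on $T$ with a single source and energy at most $(d+1)^2\,\mathcal{E}(\rho)<\infty$, so $T$ would be transient as well. Contrapositively, $T$ recurrent forces $G^x\cong S(T)$ recurrent, and then the preceding theorem applies. You could equally fill this gap by noting that $S(T)$ and $T$ are roughly isometric bounded-degree graphs and that recurrence is a rough-isometry invariant in that class, but some argument of this sort is needed; reading the preceding theorem's proof as if it only uses a walk on the $1$-skeleton leans on a notational looseness there rather than on the hypothesis as stated.
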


\begin{proof}
Using the result of Benjamini and Schramm \cite{benjamini}, $T$ is recurrent for $\nu$-a.e. $(T, [x])$.  By the preceding lemma, it suffices to show that this implies $G^x$ is recurrent.  Recall that each point $y \in G^x$ is actually a discrete tangent vector, or directed edge, $(y_1, y_2) \in E(T)$, and that two such distinct tangent vectors $(y_1, y_2)$ and $(z_1, z_2)$ are adjacent if and only if $y_1$ and $z_1$ are either equal or adjacent, and $y_2$ and $z_2$ are either equal or adjacent.  We will use the flow theorem, which states that a graph $\Gamma$ is transient if and only if it has a flow $\rho$ with a single source, no sinks, and a finite energy
\[ \mathcal{E}(\Gamma) = \sum_{(x, y) \in E(\Gamma)} \left( \rho(x, y) \right)^2 < \infty. \]

Suppose that $G^x$, where $x$ is a tangent vector on the triangulation $T$, is transient.  Then there exists a flow $\rho$ with a single source $w = (w_1, w_2) \in G^x$ and finite energy.  Define a flow on $T$ by
\[ \rho'(y, z) = \sum_{\stackrel{y_2 \in B_1(y)}{z_2 \in B_1(z)}} \rho((y, y_2), (z, z_2)). \]
We note that 
\[ \mathrm{div}(\rho')(y) = \sum_{y_2 \in B_1(y)} \mathrm{div}(\rho)(y, y_2), \]
and hence $\rho'$ has a single source at $w$ and no sinks.

Furthermore,
\begin{eqnarray*}
\left( \rho'(y, z) \right)^2 & = & \left( \sum_{\stackrel{y_2 \in B_1(y)}{z_2 \in B_1(z)}} \rho((y, y_2), (z, z_2)) \right)^2 \\
& \leq & \sum_{\stackrel{y_2 \in B_1(y)}{z_2 \in B_1(z)}} \rho((y, y_2), (z, z_2))^2 |B_1(y)| |B_1(z)| \\
& \leq & (d + 1)^2 \sum_{\stackrel{y_2 \in B_1(y)}{z_2 \in B_1(z)}} \rho((y, y_2), (z, z_2))^2.
\end{eqnarray*}
Taking sums gives
\[ \mathcal{E}(\rho') \leq (d + 1)^2 \mathcal{E}(\rho) < \infty, \]
from which we conclude that $T$ is transient.  By contrapositive, if $T$ is recurrent, then $G^x$ is recurrent.
\end{proof}

The case of measures constructed using a leaf as in Definition \ref{leafmeasure} is less clear.
\begin{con}\label{thatconjecture}
	If $G^x$ is recurrent and $\nu_x$ exists, then $G^y$ is recurrent for $\nu_x$-a.e. y.
\end{con}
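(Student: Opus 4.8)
The plan is to establish the conjecture by showing that, under the hypotheses, $\nu_x$ is (up to the harmless degree-unbiasing explained below) a unimodular random rooted triangulation, and then to invoke the planar bounded-degree case of the Benjamini--Schramm recurrence theorem \cite{benjamini} --- the same tool used in the corollary above, but now in its ``unimodular'' rather than ``limit of finite graphs'' form. Note first that, since vertex degrees are bounded by $d$ and $\mathcal{D}$ is compact, $(\hat{G}_0,\delta)$ is compact; hence the probability measures $\nu^x_k$ of Definition \ref{leafmeasure} always have weak-$*$ subsequential limits, and the hypothesis ``$\nu_x$ exists'' is precisely the statement that the full sequence of patch-frequencies $\nu^x_k(\hat{G}_0^A)$ converges. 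Every triangulation in $\mathcal{T}$ has genus-$0$ realization, so it (and each of its finite subcomplexes) is planar; thus once $\nu_x$ is known to be unimodular and supported on bounded-degree planar triangulations, \cite{benjamini} gives recurrence of $\nu_x$-a.e.\ $G^y$. Automorphisms form a $\nu_x$-null set and may be ignored exactly as elsewhere in the paper.

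The leafwise simple random walk induces the ``environment seen from the walker'' Markov chain on $\hat{G}_0$: re-root the $1$-skeleton at a uniformly chosen neighbour of the current root. Calling this chain $P$, the measure $\nu^x_k$ is exactly $\delta_{[x]}P^k$ pushed forward by ``root at the current walk position'', so $\nu_x$ is $P$-stationary. What remains is to show that $\nu_x$ is moreover \emph{reversible} for $P$. Granting this, invariance under re-rooting along a single edge together with connectedness of the leaves makes $\nu_x$ invariant under re-rooting anywhere in the leaf; multiplying its density by $1/\deg$ (a bounded, bounded-below operation since $1\le\deg\le d$, hence preserving null sets) then yields a measure invariant for the leaf equivalence relation with leafwise counting measure --- equivalently a transverse measure in the sense of \cite{moore} satisfying the mass-transport principle, i.e.\ a unimodular random rooted triangulation.

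Reversibility of $\nu_x$ is where recurrence of $G^x$ is used, and I expect it to be the main obstacle. On the recurrent graph $G^x$ the walk from $x$ returns to $x$ almost surely, and a simple-random-walk loop $x=v_0,v_1,\dots,v_m=x$ and its reversal have the same probability $\prod_i \deg(v_i)^{-1}$; consequently the occupation statistics of an excursion from $x$ --- in particular the joint law of the radius-$r$ patch around two consecutive walk positions along such an excursion --- are invariant under time reversal. One then wants: (i) the patch-frequencies $\nu_x(\hat{G}_0^A)$, and more generally the two-consecutive-position statistics, can be computed via the excursion decomposition by a renewal argument, so that the time-reversal symmetry of a single excursion passes to $\nu_x$; and (ii) the contribution of the initial segment before the first return and of the final, possibly long, incomplete excursion is asymptotically negligible. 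Step (ii) is delicate exactly when $G^x$ is \emph{null} recurrent (excursion lengths of infinite mean, as for a planar grid): one cannot bound the incomplete excursion by a fixed fraction, and the argument must rely on the assumed existence of the patch-density limits --- equivalently, on the equidistribution of the walk's occupation measure --- rather than on a naive renewal count. In the transient case step (ii) genuinely fails: a positive density of the walk's time is spent escaping, with no compensating reversed excursion; this is precisely why recurrence is assumed, and a by-product of the argument is that under these hypotheses $\nu_x$ is a bona fide transverse measure, something one does not expect for arbitrary leaves.

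Finally, with $\nu_x$ exhibited as a unimodular random rooted triangulation of degree $\le d$ supported on planar graphs, the Benjamini--Schramm theorem \cite{benjamini} (equivalently: circle-pack such a triangulation and apply the finite-sum-of-squared-radii recurrence criterion) gives that $\nu_x$-a.e.\ $G^y$ is recurrent, as required.
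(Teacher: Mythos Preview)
The paper does not prove this statement: it is stated as a \emph{conjecture} (the environment is \verb|\begin{con}...\end{con}|), and the sentence immediately following it reads ``If this conjecture holds, it will also imply\ldots''.  There is therefore no proof in the paper to compare your attempt against.

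That said, your outline is a sensible attack on the problem, and it is worth recording where the genuine gap lies.  The reduction you propose --- show that $\nu_x$ (after the $1/\deg$ reweighting) is unimodular, then invoke the bounded-degree planar recurrence theorem --- is the natural strategy, and the stationarity of $\nu_x$ for the environment-seen-from-the-walker chain is indeed automatic from its definition as a Ces\`aro limit.  The step you yourself flag as ``the main obstacle'' is the real one: passing from stationarity to \emph{reversibility} of $\nu_x$.  Your excursion argument is correct in spirit (a closed loop and its time-reversal have the same probability under simple random walk), but the null-recurrent case is not handled: when excursions have infinite mean, the renewal bookkeeping you describe does not converge, and ``the assumed existence of the patch-density limits'' does not by itself supply the missing symmetry --- one can have a stationary, non-reversible environment process whose one-point marginals all converge.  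What is needed is a two-point statement: that the joint law of $(B_k^x,B_{k+1}^x)$, averaged over $k\le n$, is asymptotically exchangeable.  This does not obviously follow from convergence of the one-point frequencies, and making it precise in the null-recurrent regime is exactly the content of the conjecture.

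A minor bibliographic point: the result you invoke at the end --- recurrence for \emph{unimodular} bounded-degree planar graphs --- is not quite what is in \cite{benjamini}, which treats distributional limits of finite planar graphs.  The extension to general unimodular random planar graphs is in later work (Gurel-Gurevich--Nachmias; Angel--Hutchcroft--Nachmias--Ray), so if this line of argument is ever completed the citation should be adjusted.
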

If this conjecture holds, it will also imply that $\hat{G}$ is amenable when equipped with the transverse measure $\nu_x$ constructed subject to these conditions.

\subsection{Ergodicity}

Next we consider ergodicity.  We will start by proving the following lemma:

\begin{lem}\label{ergodiclemma}
Suppose that $\nu$ is chosen such that, for any decorated finite patch $A$, $\nu^x = \nu$ for $\nu$-almost every $x$ (that is, the patch density of any finite patch $A$ almost surely does not depend on $x$).  For all $f, g \in L^1(G^0)$, we have
\[ \lim_{n \to \infty} \int f(x) \int P(x \xrightarrow[n]{} y) g(y) \,d\lambda^x(y) \,d\mu(x) = \int f \,d\mu \int g \,d\mu. \]
\end{lem}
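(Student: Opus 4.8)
The plan is to read the inner integral $\int P(x\xrightarrow[n]{}y)\,g(y)\,d\lambda^x(y)$ as $U_n g(x)$, the action of the $n$-step leafwise transition operator on $g$, and to observe that when $g=\chi_{\hat G_0^B}$ this number is exactly the probability that the leafwise random walk started at $x$ sits in $\hat G_0^B$ at time $n$. By Definition~\ref{leafmeasure} this probability converges, as $n\to\infty$, to the patch density $\nu^x(B)$, and by the standing hypothesis $\nu^x=\nu$ it converges to the \emph{constant} $\nu(\hat G_0^B)$. So the whole lemma is, morally, a statement about indicator functions together with a density argument to pass to general $L^1$ functions; throughout I will write $\mu=\nu$ for the transverse measure.

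First I would reduce to the case $f=\chi_{\hat G_0^A}$, $g=\chi_{\hat G_0^B}$. Both sides of the asserted identity are bilinear in $(f,g)$, and finite linear combinations of patch indicators $\chi_{\hat G_0^A}$ are dense in $L^1(\mu)$ (the sets $\hat G_0^A$ generate the relevant $\sigma$-algebra and $\mu$ is finite), so it suffices to have two uniform bounds on the operators $U_n$: first, $\|U_n g\|_\infty\le\|g\|_\infty$, immediate because $U_n$ comes from a Markov kernel; and second, $\|U_n g\|_{L^1(\mu)}\le\|g\|_{L^1(\mu)}$, which follows from invariance of $\mu$ under the leafwise walk, i.e.\ from
\[ \int_{G_0}\int P(x\xrightarrow[n]{}y)\,h(y)\,d\lambda^x(y)\,d\mu(x)=\int_{G_0}h\,d\mu\qquad(h\ge0). \]
(Taking $f=g=\chi_{\hat G_0^A}$ with $\hat G_0^A=\hat G_0$ in the conclusion, incidentally, forces $\mu(\hat G_0)=1$.) Given these bounds, a routine $3\varepsilon$ estimate — first approximating $f$ and $g$ by bounded functions, then by patch-simple functions — reduces the lemma to the indicator case.

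For $f=\chi_{\hat G_0^A}$ and $g=\chi_{\hat G_0^B}$ we have
\[ \int f(x)\int P(x\xrightarrow[n]{}y)\,g(y)\,d\lambda^x(y)\,d\mu(x)=\int_{\hat G_0^A}\int_{\hat G^x}P(x\xrightarrow[n]{}y)\,\chi_{\hat G_0^B}(y)\,d\lambda^x(y)\,d\mu(x). \]
By Definition~\ref{leafmeasure} the inner integral tends to the patch density $\nu^x(B)$ for $\mu$-almost every $x$ (the hypothesis $\nu^x=\nu$ presupposes this limit exists a.e.), and $\nu^x(B)=\nu(\hat G_0^B)$ a.e.\ by hypothesis; since the integrand is bounded by $1$ and $\mu$ is a probability measure, dominated convergence gives the limit $\mu(\hat G_0^A)\,\nu(\hat G_0^B)=\int f\,d\mu\int g\,d\mu$, which is the assertion.

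The main obstacle is the second uniform bound used in the reduction, namely that $\mu$ is stationary for the leafwise random walk so that each $U_n$ is an $L^1(\mu)$-contraction: this is precisely what legitimizes passing from patch indicators to arbitrary $L^1$ functions, and it must be checked against the precise normalization of the leafwise measure $\lambda$ and the precise walk used in Definition~\ref{leafmeasure} (it holds naturally, for instance, for a walk reversible with respect to $\lambda$). A minor additional point is to confirm that the transition probabilities $P(x\xrightarrow[n]{}y)$ appearing in the statement coincide, after this normalization, with the exhaustion $f_n$ of Definition~\ref{leafmeasure}, so that the limit identified above is literally the patch density; once these identifications are in place the argument is routine.
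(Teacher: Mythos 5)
Your proof takes essentially the same route as the paper's: verify the identity for patch indicators $f=\chi_{\hat G_0^A}$, $g=\chi_{\hat G_0^B}$ using the hypothesis $\nu^x=\nu$, then extend by bilinearity and density of patch-simple functions in $L^1$. In fact you are more careful than the paper, whose proof is a two-line sketch: you correctly supply the dominated convergence step in the indicator case and, more importantly, you identify that the passage to general $L^1$ functions requires uniform (in $n$) $L^\infty$ and $L^1$ bounds on the transition operators $U_n$ --- the latter resting on stationarity of $\mu$ for the leafwise walk, a point the paper does not address at all.
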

\begin{proof}
	For each finite patch $A$, let $\phi(A) = \nu(\hat{G}^A)$.  If $A$ and $B$ are decorated finite patches and $f = \chi_{\hat{G}^A}$ and $g = \chi_{\hat{G}^B}$, the claim holds since both sides equal $\phi(A) \phi(B)$.  The claim in general follows from bilinearity and the fact that functions of the form $\chi_{\hat{G}^A}$ span a dense subset of $L^1(\hat{G}^0)$.
\end{proof}

\begin{thm}
	Suppose that $\nu$ is chosen such that, for any decorated finite patch $A$, $\nu^x = \nu$ for $\nu$-almost every $x$.  Then $G$ equipped with the measure $\nu$ is ergodic.
\end{thm}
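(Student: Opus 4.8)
The plan is to deduce ergodicity of $G$ (equivalently of $\hat G$ with the transverse measure $\nu$) from the mixing-type statement of Lemma \ref{ergodiclemma}. Recall that a foliated space with transverse measure is ergodic precisely when every leafwise-saturated measurable set $S \subset \hat G_0$ has $\nu(S) \in \{0, \nu(\hat G_0)\}$ — i.e.\ $S$ agrees, up to a null set, with $\emptyset$ or the whole space. So suppose $S$ is such a saturated set with $\nu(S) > 0$; I must show its complement is null.

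First I would set $f = g = \chi_S$ in Lemma \ref{ergodiclemma}. Because $S$ is leafwise-invariant, for every $x$ and every $n$ the random walk from $x$ after $n$ steps stays inside $S$ whenever $x \in S$, so $\int P(x \xrightarrow[n]{} y)\,\chi_S(y)\,d\lambda^x(y) = \chi_S(x)$ for $\nu$-a.e.\ $x$. Hence the left-hand side of the identity equals $\int \chi_S\,d\mu = \nu(S)$ for every $n$, and the limit is $\nu(S)$. The right-hand side is $\nu(S)^2$. Therefore $\nu(S) = \nu(S)^2$, forcing $\nu(S) \in \{0,1\}$ after normalizing (or, without normalization, $\nu(S) \in \{0, \nu(\hat G_0)\}$ once one notes the same argument applied to $\chi_{\hat G_0}$ in the $g$ slot gives the total mass). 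This is the whole content of ergodicity.

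The point requiring care — and the main obstacle — is the bookkeeping connecting "ergodic as a foliated space / transverse measure" to "every leafwise-saturated set is trivial," together with the measurability and $L^1$ issues: $\chi_S$ need not be integrable if $\nu(\hat G_0) = \infty$, so I would either pass to the finite-measure case by normalization (legitimate since a transverse measure here is a measure on the complete transversal $\hat G_0$ and the statement is scale-invariant) or apply the lemma to $f = \chi_{S \cap \hat G_0^A}$, $g = \chi_S$ for patches $A$ and let $A$ exhaust, using that the $\hat G_0^A$ generate the $\sigma$-algebra. I would also remark that "leafwise-saturated" and "invariant under the random-walk dynamics up to null sets" coincide here because the walk moves only within a leaf and every edge is traversed with positive probability, so a set invariant mod $\nu$-null under the walk is leafwise-saturated mod null; this is where the hypothesis $\nu^x = \nu$ a.e.\ (via the lemma) does the real work, since it is what makes the walk "mix" the transverse coordinate. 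Modulo that identification, the computation above closes the proof immediately.
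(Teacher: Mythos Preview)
Your proposal is correct and is essentially the same argument as the paper's: both plug an invariant object into Lemma~\ref{ergodiclemma} with $f=g$, use leafwise invariance to collapse the inner integral $\int P(x\xrightarrow[n]{}y)g(y)\,d\lambda^x(y)$ to $g(x)$, and read off that $\int g^2\,d\mu = \bigl(\int g\,d\mu\bigr)^2$. The only cosmetic difference is that the paper takes $g$ to be an arbitrary invariant $L^1$ function (normalized to mean zero, concluding $g\equiv 0$), while you take $g=\chi_S$ for an invariant set and conclude $\nu(S)=\nu(S)^2$; these are equivalent formulations of ergodicity, and your extra care about finiteness of $\nu$ is more than the paper provides.
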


\begin{proof}
Suppose that $f$ is an invariant, real-valued function (that is, $g(y) = f(x)$ for any $y \in G^x$) with $\int f \,d\mu = 0$.  Setting $g = f$ in Lemma \ref{ergodiclemma},
\begin{eqnarray*}
0 & = & \left( \int f \,d\mu \right)^2 \\
& = & \lim_{n \to \infty} \int f(x) \int P(x \xrightarrow[n]{} y) f(x) \,d\lambda^x(y) \,d\mu(x) \\
& = & \lim_{n \to \infty} \int (f(x))^2 \,d\mu(x) \\
\end{eqnarray*}
and hence $f \equiv 0$ $\mu$-a.e.  Since every such $f$ is zero, $G$ is ergodic.
\end{proof}

\begin{con}\label{theotherconjecture}
	The measure $\nu_u$ is ergodic.
\end{con}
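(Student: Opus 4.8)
The plan is to deduce ergodicity of $\nu_u$ from the ergodicity theorem above, by checking that $\nu=\nu_u$ satisfies its hypothesis: that for every decorated finite patch $A$ the patch density $\nu_u^x(A)$ exists and equals $\nu_u(\hat{G}_0^A)$ for $\nu_u$-a.e.\ $x$. (The construction of $\nu_u$ already fixes a convergent subsequence of the $\nu_k$, so Conjecture \ref{thisconjecture} is not needed here.) The natural way to obtain the patch-density statement is to prove the matching concentration estimate for the finite approximants and transfer it. Write $\hat{f}_k(A)$ for the fraction of discrete tangent vectors $x$ of a $\nu_k$-random sphere triangulation $T$ with $(T,[x])\in\hat{G}_0^A$; then $E_{\nu_k}[\hat{f}_k(A)]=\nu_k(\hat{G}_0^A)\to\nu_u(\hat{G}_0^A)$ by construction, so it would suffice to show $\mathrm{Var}_{\nu_k}(\hat{f}_k(A))\to 0$ for every $A$. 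Equivalently, a $\nu_k$-random sphere carrying two independently and uniformly chosen marked tangent vectors should converge, in the two-rooted triangulation topology, to $\nu_u\otimes\nu_u$. This ``decoupling of far-apart regions of a large uniform sphere'' is, at the level of the limit, precisely ergodicity; it also yields the patch-density criterion, because on a recurrent leaf the random-walk exhaustion of Definition \ref{leafmeasure} is a Reiter sequence (this is exactly the content of the amenability proof), and an ergodic-theorem argument along it then forces $\nu_u^x(A)$ to be $\nu_u$-a.s.\ constant.

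The heart of the argument is a \emph{two-point enumeration estimate}: for fixed finite patches $A$ and $B$, the number of sphere triangulations of maximum degree at most $d$ that carry an occurrence of $A$ at one marked tangent vector and of $B$ at a second, far-away one should, after dividing by the total count, factor asymptotically into $\nu_u(\hat{G}_0^A)\,\nu_u(\hat{G}_0^B)$. One attacks this by surgery: excise the two patches and enumerate triangulations of a sphere with two disjoint disks removed, using Tutte-type generating-function asymptotics to show that the relevant bivariate series has the same exponential growth rate as the unmarked one and that the subexponential corrections cancel in the ratio. An essentially equivalent but perhaps sturdier route is the spatial Markov (peeling) property --- $\nu_u$ should be the local limit of critical Boltzmann triangulations, for which the complement of a discovered region bounded by a separating cycle is a pair of conditionally independent Boltzmann disks, and iterating this gives a $0$--$1$ law for the invariant $\sigma$-algebra directly. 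Either route carries the real difficulty; note also that the constraint $d>6$ keeps $\nu_u$ from being literally the standard UIPT, so one cannot just quote known ergodicity of the UIPT without first matching the models --- in particular, verifying that the peeling/Markov structure survives the degree cap.

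What remains is the transfer plus some bookkeeping. By Benjamini and Schramm \cite{benjamini}, $\nu_u$-a.e.\ triangulation is recurrent (as already used in the amenability corollary), so the random-walk Reiter sequence $g_n$ is available on $\hat{G}^x$ and realizes the patch density there. The quantitative point to pin down is that the polynomial scale in $k$ at which the two-point estimates become effective is reached by a random walk on a $\nu_k$-random sphere before it feels the global geometry, so that the finite concentration genuinely passes to a statement about a single limiting leaf rather than merely to Benjamini--Schramm convergence $\nu_k\to\nu_u$. The residual issues are minor: a uniform sphere triangulation has trivial automorphism group with probability tending to $1$, so the passage between $S(T)$ and its quotient by $\mathrm{Aut}(T)$ costs nothing in the limit, and the bounded-degree hypothesis enters only through the enumeration. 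I expect the two-point/surgery estimate --- equivalently, establishing the spatial Markov property for this degree-capped model --- to be, by a wide margin, the main obstacle.
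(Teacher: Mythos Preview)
The statement you are addressing is a \emph{conjecture} in the paper, not a theorem: the paper gives no proof of it and explicitly lists it among the open questions at the end. There is therefore nothing to compare your proposal against --- the paper's ``own proof'' does not exist.

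Your proposal is not a proof either, and you seem aware of this: it is a research outline whose declared ``main obstacle'' (the two-point enumeration/decoupling estimate, equivalently a spatial Markov property for the degree-capped model) is left entirely open. The reduction you set up is reasonable --- checking the hypothesis $\nu_u^x=\nu_u$ for a.e.\ $x$ would indeed feed into the paper's ergodicity theorem --- and you correctly flag that the degree bound $d$ prevents a direct citation of known UIPT ergodicity. But none of the substantive steps (the surgery/generating-function asymptotics, the survival of peeling under the degree cap, or the passage from finite-sphere concentration to a leafwise statement) is carried out, and each is a genuine problem in its own right. As written, this is a plausible plan of attack on an open conjecture, not a proof, and should be presented as such.
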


\section{Operator algebras}

\subsection{Construction}

We are interested in the properties of operators on $G$.  To study these operators, we will consider operator algebras.  Both the $C^*$-algebra and the von Neumann algebra are generated using leafwise convolution operators.

\begin{defi}[The reduced $C^*$-algebra of $G$]
	We construct $C^*_r(G)$ as a $C^*$-algebra operating on $\bigoplus_{x \in G_0} L^2(G_x)$.  To each continuous, compactly supported function $f$ on $G$, and for each $x \in G_0$, we associate the operator $\pi_x(f)$ on $L^2(G_x)$ given by
	\[ \pi_x(f)(\phi)(z) = \int_{y \in G_x} \phi(y) f(y, z) \,d\lambda_x(y) \]
	for each $\phi \in L^2(G_x)$.
	The closure of all $\pi_{C^*}(f) = \{\pi_x(f): x \in G_0\}$ under the norm
	\[ \|\pi_{C^*}(f)\| = \sup_{(T, [x]) \in G} \|\pi_x(f)\| \]
	forms the $C^*$-algebra $C^*_r(G)$ associated to $G$.
\end{defi}

\begin{defi}[The von Neumann algebras of $G$ and $\hat{G}$]
	We construct $W^*(G)$ as a von Neumann algebra operating on $L^2(G)$.  Using the transverse measure $\nu$ defined above, we can construct the spaces $L^p(G)$.  For $f \in L^1(G)$ and $\phi \in L^2(G)$, we define the operator
	\[ \pi(f)(\phi)([(x, z)]) = \int_{y} f(x, y) \phi(y, z) \,d\lambda_x(y). \]
	The weak closure of all such $\pi(f)$ is the von Neumann algebra $W^*(G)$ associated with $G$.  That is, $H \in B(L^2(G))$ belongs to $W^*(G)$ if there exists a sequence $(f_n)$ such that $\langle \pi(f_n)\phi, \eta \rangle \to \langle H \phi, \eta \rangle$ for all $\phi, \eta \in L^2(G)$.
\end{defi}
	
	We can repeat this construction using $\hat{G}$ in place of $G$.  Since $\hat{G}$ is a complete transversal of $G$, we have
	\[ W^*(G) \cong W^*(\hat{G}) \otimes \mathcal{B}(L^2([0, 1])) \]
	by Moore and Schochet \cite{moore} (Proposition 6.21).

The $C^*$ and von Neumann algebras are related as follows.

\begin{lem}
	If $f$ is a continuous, compactly supported function, then
	\[ |\pi(f)| \leq |\pi_{C^*}(f)|. \]
\end{lem}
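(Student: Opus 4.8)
The plan is to realize $L^2(G)$ as a direct integral over the source fibration of $G$ and to observe that $\pi(f)$ is diagonal with respect to this decomposition, with each fiber operator unitarily equivalent (up to transposition) to one of the regular representations $\pi_x(f)$ used to define the $C^*$-norm. The inequality then follows by comparing norms fiberwise.

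First I would set up the disintegration. Using the transverse measure $\nu$ together with the leafwise measures $\lambda_u$ on the source fibers $G_u = s^{-1}(u)$, one has
\[ \|\phi\|^2_{L^2(G)} = \int_{G_0} \bigl\|\phi|_{G_u}\bigr\|^2_{L^2(G_u)} \, d\mu(u), \]
where $\mu$ is the measure on $G_0$ determined by $\nu$ and $\phi|_{G_u}$ is the restriction of $\phi$ to $G_u$; this is exactly the statement $L^2(G) = \int_{G_0}^{\oplus} L^2(G_u)\,d\mu(u)$, and it is a basic feature of the construction of $L^2(G)$ from a transverse measure (the leafwise measure is symmetric, so integrating first over source fibers or first over range fibers gives the same total measure on $G$).

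Second I would check that $\pi(f)$ respects this decomposition. From the defining formula
\[ \pi(f)(\phi)([(x,z)]) = \int_{y} f(x,y)\,\phi([(y,z)])\,d\lambda_x(y), \]
fixing the source coordinate $z = u$ shows that $(\pi(f)\phi)|_{G_u}$ depends only on $\phi|_{G_u}$, and the induced operator on $L^2(G_u)$ (identified with $L^2$ of the leaf through $u$) has integral kernel $(x,y) \mapsto f(x,y)$. Comparing with the $C^*$-definition, this is precisely the transpose of $\pi_u(f)$, whose kernel is $(z,w)\mapsto f(w,z)$. Since transposition is conjugation of the adjoint by the anti-unitary complex conjugation of $L^2$, it preserves operator norm, so the $u$-th fiber operator of $\pi(f)$ has norm equal to $\|\pi_u(f)\| \le \sup_{x \in G_0}\|\pi_x(f)\| = \|\pi_{C^*}(f)\|$. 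Combining the two steps, for $\phi \in L^2(G)$,
\[ \|\pi(f)\phi\|^2_{L^2(G)} = \int_{G_0}\bigl\|(\pi(f)\phi)|_{G_u}\bigr\|^2 \, d\mu(u) \le \|\pi_{C^*}(f)\|^2\int_{G_0}\bigl\|\phi|_{G_u}\bigr\|^2\,d\mu(u) = \|\pi_{C^*}(f)\|^2\,\|\phi\|^2_{L^2(G)}, \]
which gives $\|\pi(f)\| \le \|\pi_{C^*}(f)\|$. Continuity and compact support of $f$ are used only to guarantee that the integrals converge and that $\pi(f)$ and the $\pi_x(f)$ are bounded operators to begin with.

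The main obstacle is the first step: pinning down the disintegration of $L^2(G)$ over the source fibration with the correct measures, and verifying that the resulting fiber of $\pi(f)$ is genuinely a transpose of $\pi_u(f)$ rather than some other operator. This requires matching the conventions of the $W^*$- and $C^*$-constructions carefully and invoking the symmetry of the leafwise measure; once these are in place, the rest of the argument is routine.
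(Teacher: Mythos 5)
Your argument is essentially the paper's own proof: the paper likewise writes $\|\pi(f)\phi\|^2$ as an iterated integral, i.e.\ a fiberwise decomposition of $L^2(G)$, identifies the fiber action of $\pi(f)$ with $\pi_x(f)$, and bounds it by $\sup_x\|\pi_x(f)\| = \|\pi_{C^*}(f)\|$. Your only addition is the explicit care that the fiber operator is the transpose of $\pi_x(f)$ (hence of equal norm), a point the paper glosses over, so the proposal is correct and matches the paper's approach.
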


\begin{proof}
	For $\phi \in L^2(G)$,
	\begin{eqnarray*}
	|\pi(f)\phi| & = & \left(\int \int |\pi(f) \phi(x, y)|^2 \,d\lambda_x(y) \,d\mu(x) \right)^{1/2} \\
	& = & \left(\int \int |\pi_x(f) \phi(x, y)|^2 \,d\lambda_x(y) \,d\mu(x) \right)^{1/2} \\
	& = & \left(\int |\pi_x(f) \phi|_{G^x}|^2 \,d\mu(x) \right)^{1/2} \\
	& \leq & \left(\int |\pi_x(f)|^2 |\phi_{G^x}|^2 \,d\mu(x) \right)^{1/2} \\
	& \leq & \left(\int |\pi_{C^*}(f)|^2 \phi_{G^x}|^2 \,d\mu(x) \right)^{1/2} \\
	& = & |\phi| |\pi_{C^*}(f)|.
	\end{eqnarray*}
\end{proof}

\begin{cor}
	There exists an algebra homomorphism $\Phi: C^*_r(G) \to W^*(G)$ such that $\Phi(\pi_C^*(f))  = \pi(f)$.
\end{cor}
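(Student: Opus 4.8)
The plan is to define $\Phi$ on the dense $*$-subalgebra of $C^*_r(G)$ consisting of the operators $\pi_{C^*}(f)$ with $f$ continuous and compactly supported, and then to extend it by continuity. By construction $C^*_r(G)$ is the norm closure of $\{\pi_{C^*}(f) : f \in C_c(G)\}$, and both $f \mapsto \pi_{C^*}(f)$ and $f \mapsto \pi(f)$ are linear, so the preceding lemma gives, for all $f, g \in C_c(G)$,
\[ |\pi(f) - \pi(g)| = |\pi(f - g)| \leq |\pi_{C^*}(f - g)| = |\pi_{C^*}(f) - \pi_{C^*}(g)|. \]
In particular $\pi_{C^*}(f) = \pi_{C^*}(g)$ forces $\pi(f) = \pi(g)$, so the rule $\Phi_0(\pi_{C^*}(f)) := \pi(f)$ unambiguously defines a linear map $\Phi_0$ on $\{\pi_{C^*}(f) : f \in C_c(G)\}$ that is bounded with operator norm at most $1$; its image lies in $W^*(G)$ since each $\pi(f)$ does and a von Neumann algebra is norm-closed.

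Next I would check that $\Phi_0$ is multiplicative. Both $\pi_{C^*}$ and $\pi$ are the convolution representations attached to the same leafwise Haar system, so interchanging the order of integration in the defining integral formulas shows that $\pi_x(f)\pi_x(g) = \pi_x(f * g)$ for every unit $x$, where $f * g$ is the groupoid convolution of $f$ and $g$; this $f * g$ is again continuous and compactly supported, by left-invariance of the Haar system together with compactness of the supports. Taking the supremum over $x$ yields $\pi_{C^*}(f)\pi_{C^*}(g) = \pi_{C^*}(f * g)$, and repeating the same manipulation inside the integral against $\mu$ yields $\pi(f)\pi(g) = \pi(f * g)$ with the same $f * g$. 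Hence
\[ \Phi_0\bigl(\pi_{C^*}(f)\pi_{C^*}(g)\bigr) = \Phi_0\bigl(\pi_{C^*}(f * g)\bigr) = \pi(f * g) = \pi(f)\pi(g) = \Phi_0\bigl(\pi_{C^*}(f)\bigr)\Phi_0\bigl(\pi_{C^*}(g)\bigr), \]
so $\Phi_0$ is an algebra homomorphism on a dense subalgebra of $C^*_r(G)$ (a parallel computation shows it also preserves the adjoint operation, though only the algebra structure is asserted here).

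Finally, since $\Phi_0$ is bounded and defined on a dense subset of $C^*_r(G)$, it extends uniquely to a bounded linear map $\Phi : C^*_r(G) \to W^*(G)$, with range contained in the norm closure of $\{\pi(f) : f \in C_c(G)\} \subseteq W^*(G)$. Norm-continuity of multiplication in $B(L^2(G))$, together with multiplicativity of $\Phi_0$ on the dense subalgebra, shows by a routine approximation argument that $\Phi$ is itself multiplicative, hence an algebra homomorphism, and $\Phi(\pi_{C^*}(f)) = \pi(f)$ by construction. The only step here that is not purely formal is the Fubini computation identifying $\pi(f)\pi(g)$ and $\pi_{C^*}(f)\pi_{C^*}(g)$ with $\pi(f * g)$ and $\pi_{C^*}(f * g)$; the remainder is the standard principle that a contractive homomorphism on a dense subalgebra extends uniquely to the whole $C^*$-algebra.
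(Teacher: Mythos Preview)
Your proof is correct and follows essentially the same approach as the paper: use the preceding lemma to see that $\pi_{C^*}(f)\mapsto\pi(f)$ is a well-defined contraction on the dense subalgebra $\{\pi_{C^*}(f):f\in C_c(G)\}$, then extend by continuity. The only difference is one of presentation and detail: the paper phrases the extension in terms of Cauchy sequences and simply asserts that ``$\Phi$ is an algebra homomorphism follows from the definition,'' whereas you explicitly verify multiplicativity via $\pi(f)\pi(g)=\pi(f*g)$ and $\pi_{C^*}(f)\pi_{C^*}(g)=\pi_{C^*}(f*g)$ before passing to the closure.
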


\begin{proof}
Let $H \in C^*_r(G)$.  Choose a sequence of continuous, compactly supported functions $(f_n)$ with $\pi_{C^*}(f_n) \to H$.  Since $\pi_{C^*}(f_n)$ is Cauchy, $\pi(f_n)$ is also Cauchy by the preceding lemma, and hence converges.  Let $\Phi(H) = \lim \pi(f_n)$.  To show that this is well-defined, let $(g_n)$ be another such sequence of functions.  Then $\pi_{C^*}(f_n - g_n) \to 0$ and the preceding lemma gives $\lim \pi(f_n) = \lim \pi(g_n)$.  That $\Phi$ is an algebra homomorphism follows from the definition.
\end{proof}

For our results the most important algebra is $W^*(\hat{G})$.  We will first characterize the operators in $W^*(\hat{G})$, then discuss its classification.

\begin{defi}[The diagonal function]
	Define $D: \hat{G} \to \mathbb{C}$ by $D(T, x, y) = 1$ if $x = y$ and $D(T, x, y) = 0$ otherwise.
\end{defi}

It is easily seen that $D \in L^1(\hat{G}) \cap L^2(\hat{G})$ and that $D$ is the identity of leafwise convolution; that is, $\pi(D) = 1$.

\begin{lem}
	Let $f, g \in L^2(\hat{G})$ and $h \in L^1(\hat{G})$.  Then $\langle h * g, f \rangle = \langle h, f * g^* \rangle$.
\end{lem}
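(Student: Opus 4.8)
The plan is to verify the identity by unwinding the definition of leafwise convolution and applying Fubini's theorem. Recall that for $h \in L^1(\hat G)$ and $g \in L^2(\hat G)$, the convolution is $(h * g)(x,z) = \int_y h(x,y) g(y,z)\, d\lambda_x(y)$, and the inner product on $L^2(\hat G)$ is $\langle F, G\rangle = \int\int F(x,z)\overline{G(x,z)}\, d\lambda_x(z)\, d\mu(x)$ with respect to the transverse measure. I would also want the convention for the adjoint $g^*(x,y) = \overline{g(y,x)}$, which is the standard involution on functions on a groupoid; if the paper has not made this explicit I would state it at the start of the proof.

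First I would write out the left-hand side:
\[
\langle h * g, f\rangle = \int \int \left( \int h(x,y) g(y,z)\, d\lambda_x(y) \right) \overline{f(x,z)}\, d\lambda_x(z)\, d\mu(x).
\]
Next I would like to interchange the order of integration in $y$ and $z$. This is where the integrability hypotheses matter: since $h \in L^1(\hat G)$ and $f, g \in L^2(\hat G)$, the triple integral of $|h(x,y)||g(y,z)||f(x,z)|$ is finite — one can see this by fixing $x$, applying Cauchy–Schwarz in $z$ to bound $\int |g(y,z)||f(x,z)|\, d\lambda_x(z) \le \|g_{|G^y}\|_2 \|f_{|G^x}\|_2$, and then using $h \in L^1$ together with the fact that $y$ ranges over the same leaf as $x$ (so the relevant $L^2$ norms are uniformly controlled along the leaf). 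With Fubini justified, I rearrange to
\[
\int \int g(y,z)\left( \int h(x,y)\overline{f(x,z)}\, d\lambda_x(z)\right)\, d\lambda_x(y)\, d\mu(x),
\]
and recognize the inner integral, after writing $\overline{f(x,z)} = \overline{f(x,z)}$ and identifying $g^*(z,y) = \overline{g(y,z)}$, as the convolution $(f * g^*)(x, y)$ evaluated appropriately — more precisely $\overline{(f * g^*)(x,y)} = \int f(x,z) g^*(z,y)\, d\lambda_x(z) = \int f(x,z)\overline{g(y,z)}\, d\lambda_x(z)$, so that the whole expression collapses to $\langle h, f * g^*\rangle$. The bookkeeping of complex conjugates and the direction of the groupoid arrows is the only place to be careful, and doing the two substitutions $(x,y,z) \mapsto (x,z,y)$ consistently resolves it.

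The main obstacle, such as it is, is the Fubini justification: one must confirm that the measure-theoretic setup on the groupoid (the leafwise measures $\lambda_x$ and the transverse measure $\mu$) genuinely supports the interchange for this combination of $L^1$ and $L^2$ data. Given the invariance and $\sigma$-finiteness built into the transverse measure construction of Moore and Schochet cited earlier, together with the uniform bound on vertex degrees (which makes the leafwise counting measures locally finite with uniformly bounded local structure), the absolute integrability estimate above goes through without difficulty. Everything else is a routine change of variables, so the proof should be short.
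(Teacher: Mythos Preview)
Your proposal is correct and follows essentially the same approach as the paper: expand the convolution, interchange the order of integration, and recognize the result as $\langle h, f * g^* \rangle$. The paper's proof is a terse three-line computation that omits the complex conjugates (treating the pairing as bilinear) and does not discuss the Fubini step at all, whereas you are more careful on both counts; but the underlying argument is the same.
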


\begin{proof}
We have
	\begin{eqnarray*}
		\langle h * g, f \rangle & = & \int (h * g)(x, y) f(x, y) \,d\lambda^x(y) \,d\mu(x) \\
		& = & \int h(x, z) g(z, y) f(x, y) \,d\lambda^x(z) \,d\lambda^x(y) \,d\mu(x) \\
		& = & \langle h, f * g^* \rangle.
	\end{eqnarray*}
\end{proof}

\begin{thm}\label{linearoperators}
Let $H$ be a bounded linear operator on $L^2(G)$.  The following are equivalent:
	\begin{enumerate}
		\item $H \in W^*(\hat{G})$.
		\item $H(D) * g = H(g)$ for every $g \in L^2(\hat{G})$.
		\item $H = \pi(h)$ for some $h \in L^2(\hat{G})$.
	\end{enumerate}
\end{thm}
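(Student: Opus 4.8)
The plan is to prove the short loop $(2)\Leftrightarrow(3)$ by direct computation and then close the circle with $(1)\Rightarrow(2)$ and $(3)\Rightarrow(1)$. (I read $H$ as acting on $L^2(\hat G)$, since condition (2) involves $D\in L^2(\hat G)$.) For $(2)\Rightarrow(3)$, put $h:=H(D)$, which lies in $L^2(\hat G)$ because $H$ is bounded and $D\in L^2(\hat G)$; then $H(g)=H(D)*g=h*g=\pi(h)(g)$ for all $g$, so $H=\pi(h)$. For $(3)\Rightarrow(2)$, if $H=\pi(h)$ with $h\in L^2(\hat G)$, then $H(D)=h*D=h$ since $D$ is the identity of leafwise convolution, and associativity yields $H(D)*g=h*g=\pi(h)(g)=H(g)$.

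For $(1)\Rightarrow(2)$, choose $f_n\in L^1(\hat G)$ with $\pi(f_n)\to H$ in the weak operator topology. Since $\pi(f_n)$ is bounded on $L^2(\hat G)$ and $D\in L^2(\hat G)$, the element $\pi(f_n)(D)\in L^2(\hat G)$ equals $f_n$ (apply the convolution formula with $D$), and evaluating the weak convergence at $\phi=D$ shows $f_n\to H(D)$ weakly in $L^2(\hat G)$. Each $\pi(f_n)$ satisfies (2) by associativity. To pass to the limit, fix $g\in L^2(\hat G)$ and let $\eta$ run over the dense subspace of $L^2(\hat G)$ of leafwise-bounded elements (finite essential supremum of leafwise operator norm); for such $\eta$ one has $\eta*g^*\in L^2(\hat G)$ and $H(D)*g$ leafwise trace class, so the preceding lemma and a Fubini argument give
\[ \langle H(g),\eta\rangle = \lim_n \langle f_n*g,\eta\rangle = \lim_n \langle f_n,\eta*g^*\rangle = \langle H(D),\eta*g^*\rangle = \langle H(D)*g,\eta\rangle. \]
By density, $H(g)=H(D)*g$.

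For $(3)\Rightarrow(1)$, given $H=\pi(h)$ bounded with $h\in L^2(\hat G)$, I would truncate $h$ to functions $h_n\in L^1(\hat G)\cap L^2(\hat G)$ supported on finitely many patches over a transversal set of finite measure, with leafwise operator norms at most $\|\pi(h)\|$ and $h_n\to h$ leafwise in the strong operator topology. A dominated-convergence estimate gives $\pi(h_n)\to\pi(h)$ in the strong, hence weak, operator topology; each $\pi(h_n)$ belongs to $W^*(\hat G)$ by definition, so $\pi(h)\in W^*(\hat G)$ since $W^*(\hat G)$ is weakly closed.

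The main obstacle is the limit in $(1)\Rightarrow(2)$: right convolution by a fixed $L^2$ function is only densely defined on $L^2(\hat G)$, so one cannot directly pass $\pi(f_n)(D)*g\to H(D)*g$ through the weak limit. The fix is to exploit the leafwise Hilbert--Schmidt picture --- an $L^2$ function restricts leafwise to a Hilbert--Schmidt operator, a product of two such is trace class, and right convolution by a leafwise-bounded element is a bounded operation --- and then use density of the leafwise-bounded elements; verifying that the Fubini manipulations are legitimate against the transverse measure $\nu$ is where the care is concentrated. The truncation step in $(3)\Rightarrow(1)$ requires similar but lighter bookkeeping.
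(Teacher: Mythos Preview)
Your approach is essentially the paper's: both prove the cycle $(2)\Rightarrow(3)\Rightarrow(1)\Rightarrow(2)$ using the preceding identity $\langle h*g,f\rangle=\langle h,f*g^*\rangle$, and both pass to weak limits through compactly supported approximants for the implications involving (1). The paper's $(3)\Rightarrow(1)$ is lighter than yours---it simply approximates $h$ in $L^2$ norm by compactly supported $h_k$ and uses the identity to get $\langle h_k*g,f\rangle=\langle h_k,f*g^*\rangle\to\langle h,f*g^*\rangle$, so your operator-norm-controlled truncation with leafwise strong convergence is more machinery than needed. For $(1)\Rightarrow(2)$ the paper runs exactly the chain you write down, but pairs the weak $L^2$ convergence $H_k(D)\to H(D)$ against $f*g^*$ for \emph{arbitrary} $f,g\in L^2(\hat G)$ without comment; your observation that right convolution by a fixed $L^2$ element is only densely defined (equivalently, that $f*g^*$ need not lie in $L^2$) is a genuine technical point the paper glosses over, and your restriction to a dense class of leafwise-bounded test functions is a reasonable way to make that step honest.
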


\begin{proof}
	If $H$ satisfies (2), let $h = H(D)$.  For every $g \in L^2(\hat{G})$, we have $H(g) = H(D) * g = h * g$; that is, $H = \pi(h)$.
	
	If $H$ satisfies (3), we can approximate $h$ as a norm, hence weak, limit of compactly supported functions $h_k \in L^2(\hat{G})$.  For every $g, f \in L^2(\hat{G})$,
	\begin{eqnarray*}
		\lim_{k \to \infty} \langle h_k * g, f \rangle & = & \lim_{k \to \infty} \langle h_k, f * g^* \rangle \\
		& = & \langle h, f * g^* \rangle;
	\end{eqnarray*}
	that is, $H$ is a weak limit of the $\pi(h_k)$.
	
	Finally, suppose that $H$ satisfies (1).  Then we can write $H$ as the weak limit of $H_k = \pi(h_k)$ with $h_k$ compactly supported.  Define $h = H(D)$.  Since
	\[ \langle H_k(D) * g, f \rangle = \langle H_k(D), f * g^* \rangle, \]
	weak convergence implies
	\begin{eqnarray*}
		\langle H(D) * g, f \rangle & = & \langle H(D), f * g^* \rangle \\
		& = & \lim_{k \to \infty} \langle H_k(D), f * g^* \rangle \\
		& = & \lim_{k \to \infty} \langle H_k(D) * g, f \rangle \\
		& = & \lim_{k \to \infty} \langle H_k(g), f \rangle \\
		& = & \langle H(g), f \rangle. 
	\end{eqnarray*}
	Because $f$ was arbitrary, this implies $H(D) * g = H(g)$.
\end{proof}

This result is convenient because it allows us to reduce questions about operators in $W^*(\hat{G})$ to questions about convolutions of functions on $L^2(\hat{G})$.  In the following discussion, we will identify a function $f$ with $\pi(f)$.  In addition, if $(A, x, y)$ is a twice-marked decorated discrete finite patch we will write $\chi_{(A, x, y)}$ in place of $\chi_{G^(A, x, y)}$.

Although the structure of $\hat{G}$ itself is essentially symmetric with respect to its first and second argument, $W^*(\hat{G})$ treats the two arguments quite differently, essentially because it is defined using left rather than right convolution.  The following corollary illustrates this.

\begin{cor}
	Suppose that $x \in G^0$, $y, y' \in G^x$, $H^* \in W^*(\hat{G})$, and $\phi(z, y) = \phi(z, y')$ for all $z \in G^x$.  Then $H \phi(x, y) = H \phi(x, y')$.
\end{cor}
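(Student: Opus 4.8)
The plan is to invoke the description of $W^*(\hat{G})$ given by Theorem \ref{linearoperators}: every operator in this algebra acts on $L^2$ by left leafwise convolution, and left convolution visibly involves only the first coordinate of its argument, leaving the second coordinate alone — which is precisely what the corollary asserts.

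First I would reduce to the convolution picture. Since $W^*(\hat{G})$ is a von Neumann algebra it is closed under adjoints, so the hypothesis $H^* \in W^*(\hat{G})$ yields $H \in W^*(\hat{G})$ as well (equivalently, apply Theorem \ref{linearoperators} to $H^*$ to get $H^* = \pi(h)$ and then $H = \pi(h^*)$). By Theorem \ref{linearoperators} we may therefore write $H = \pi(h)$ with $h = H(D) \in L^2(\hat{G})$. Unwinding the definition of $\pi$ and evaluating leafwise over the leaf through $x$, we get
\[ (H\phi)(x, y) = (h * \phi)(x, y) = \int h(x, w)\, \phi(w, y) \, d\lambda^x(w), \]
where $w$ ranges over $G^x$, and the same formula with $y$ replaced by $y'$ computes $(H\phi)(x, y')$.

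Now compare the two integrals. In each, the dependence on $\phi$ enters only through the functions $w \mapsto \phi(w, y)$ and $w \mapsto \phi(w, y')$ on $G^x$, and these coincide by hypothesis (take $z = w$ in the assumption $\phi(z, y) = \phi(z, y')$ for all $z \in G^x$). Hence the two integrands agree pointwise on $G^x$, the integrals are equal, and $H\phi(x, y) = H\phi(x, y')$. The entire content sits in Theorem \ref{linearoperators}; the only points needing a moment's attention are the passage from $H^* \in W^*(\hat{G})$ to $H \in W^*(\hat{G})$ via $*$-closedness, and the observation that the set $G^x$ over which the convolution is taken is exactly the set on which the hypothesis supplies the equality $\phi(\,\cdot\,, y) = \phi(\,\cdot\,, y')$, so that no gap appears. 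I do not expect any real obstacle.
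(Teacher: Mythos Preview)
Your proposal is correct and follows essentially the same route as the paper: invoke Theorem \ref{linearoperators} to write $H = \pi(h)$, then observe that the convolution integral $\int h(x,\cdot)\,\phi(\cdot,y)\,d\lambda$ depends on $\phi$ only through $\phi(\cdot,y)$, which by hypothesis coincides with $\phi(\cdot,y')$. Your explicit remark that $H^* \in W^*(\hat{G})$ forces $H \in W^*(\hat{G})$ by $*$-closedness is a point the paper's proof leaves implicit.
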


\begin{proof}
	By Theorem \ref{linearoperators}, we can write $H = \pi(h)$.  Then

	\begin{eqnarray*}
	H \phi (x, y) & = & \int h(x, z) \phi(z, y) \,d\lambda_x(z) \\
	& = & \int h(x, z) \phi(z, y') \,d\lambda_x(z) \\
	& = & H \phi (x, y').
	\end{eqnarray*}
\end{proof}

In other words, operators in $W^*(\hat{G})$ preserve invariance with respect to the second argument.  (They do not preserve invariance with respect to the first argument; consider the example where $h$ is a diagonal function taking on two different values.)  We might think of operators in $W^*(\hat{G})$ as acting on the first argument only, with the second argument being a convenient way to keep track of the foliated structure.

Our next goal is to classify $W^*(\hat{G})$.

\begin{lem}
Suppose that $f \in L^2(\hat{G})$ and $g \in L^2(\hat{G_0})$.  Define $g_D(x, z) = D(x, z) g(x) = D(x, z) g(z)$.  Then
\[ f * g_D = g_D * f \]
if and only if
\[ g(x) f(x, z) = g(z) f(x, z) \]
for almost every $(x, z) \in \hat{G}$.
\end{lem}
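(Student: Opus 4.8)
The plan is to compute both leafwise convolutions $f * g_D$ and $g_D * f$ explicitly using the integral formula for $\pi(\cdot)$, exploiting the fact that $g_D$ is supported on the diagonal so that each convolution with it collapses to a single evaluation. Concretely, for $(x,z) \in \hat{G}$, we have
\[
(f * g_D)(x, z) = \int_y f(x, y)\, g_D(y, z)\, d\lambda^x(y) = \int_y f(x, y)\, D(y, z)\, g(z)\, d\lambda^x(y) = f(x, z)\, g(z),
\]
since $D(y,z)$ picks out $y = z$ and $g_D(z,z) = g(z)$. Symmetrically,
\[
(g_D * f)(x, z) = \int_y g_D(x, y)\, f(y, z)\, d\lambda^x(y) = \int_y D(x, y)\, g(x)\, f(y, z)\, d\lambda^x(y) = g(x)\, f(x, z).
\]

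Once these two identities are in hand, the equivalence is immediate: $f * g_D = g_D * f$ in $L^2(\hat{G})$ means $f(x,z)\,g(z) = g(x)\,f(x,z)$ for $\lambda^x \times \mu$-almost every $(x,z)$, which is exactly the stated condition. The only subtlety worth spelling out is that $g_D$ genuinely lies in $L^1(\hat{G})$ (so that $\pi(g_D)$ makes sense and the convolution identities are valid): this follows because the diagonal in each fiber $\hat{G}^x$ has finite counting measure locally and $g \in L^2(\hat{G}_0)$, together with the observation already made in the text that $D \in L^1(\hat{G}) \cap L^2(\hat{G})$ and that $\pi(D) = 1$. One should also note that $f * g_D$ and $g_D * f$ are a priori only in $L^2$, so the equality is interpreted as equality of $L^2$ functions, matching the "almost every" in the conclusion.

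I do not expect any real obstacle here — the lemma is essentially a direct unwinding of definitions, and the diagonal support of $g_D$ does all the work by turning convolutions into pointwise multiplication by $g$ (on the left argument from one side, on the right argument from the other). The one place to be slightly careful is justifying the interchange/collapse of the integral against $D$: since $\lambda^x$ is counting measure on $\hat{G}^x$ (up to normalization by vertex weights) and $D(y,z)$ is supported on the single point $y = z$, the integral is really a single term, so no Fubini-type justification beyond integrability of the summand is needed. I would present the two displayed computations, remark on integrability of $g_D$, and then state that equating the results gives the claim.
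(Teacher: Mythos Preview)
Your proposal is correct and follows essentially the same approach as the paper: compute $g_D * f(x,z) = g(x)f(x,z)$ and $f * g_D(x,z) = g(z)f(x,z)$ by collapsing the convolution integral against the diagonal function $D$, then read off the equivalence. The paper's proof is terser (it omits the integrability remarks and the discussion of the a.e.\ interpretation), but the argument is the same.
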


\begin{proof}
We have
\begin{eqnarray*}
	g_D * f (x, z) & = & \int g_D (x, y) f(y, z) \,d\lambda^x(y) \\
	& = & g(x) f(x, z)
\end{eqnarray*}
and, similarly,
\[ f * g_D (x, z) = g(z) f(x, z). \]
\end{proof}

\begin{lem}\label{centerlemma}
Let $f \in L^2(\hat{G})$.  Then $\pi(f) \in Z(\hat{G})$ if and only if $f(x, y) = 0$ and $f(x, x) = f(x, y)$ for almost every $(T, x, y)$ with $x \not= y$.
\end{lem}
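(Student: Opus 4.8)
The plan is to prove the two implications separately, in each case reducing the claim to the leafwise convolution computations already performed in the lemma just above (which identifies $g_D * h$ and $h * g_D$ pointwise) and to the fact from Theorem \ref{linearoperators} that the operators $\pi(h)$, $h \in L^2(\hat G)$, are precisely the elements of $W^*(\hat G)$ and are bounded.

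For the ``if'' direction I would assume $f$ vanishes off the diagonal and that $x \mapsto f(x,x)$ is constant on leaves, so that $f = g_D$ for a bounded $g \in L^2(\hat G_0)$ with $g(x) = g(z)$ whenever $(x,z) \in \hat G$. Then $\pi(f) \in W^*(\hat G)$ by Theorem \ref{linearoperators}, and since $\pi$ is a homomorphism for leafwise convolution it suffices to check $f * h = h * f$ for every $h \in L^2(\hat G)$; but the preceding lemma gives $g_D * h(x,z) = g(x)h(x,z)$ and $h * g_D(x,z) = g(z)h(x,z)$, which agree a.e.\ by leaf-invariance of $g$. Because one-sided multiplication by the fixed bounded operator $\pi(f)$ is weakly continuous and the $\pi(h)$ are weakly dense in $W^*(\hat G)$, it follows that $\pi(f)$ commutes with all of $W^*(\hat G)$, hence lies in $Z(\hat G)$.

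For the converse I would first feed diagonal operators into the commutation hypothesis: for every $g \in L^2(\hat G_0)$ the preceding lemma yields $g(x)f(x,z) = g(z)f(x,z)$ for a.e.\ $(x,z)$. Running this over the countable family $g = \chi_{\hat G_0^{A_j}}$, with $A_j$ enumerating decorated discrete finite patches, produces a single conull set on which $(g_j(x) - g_j(z)) f(x,z) = 0$ for all $j$; since these patches separate the points of $\hat G_0$, this forces $f(x,z) = 0$ for $x \neq z$, i.e.\ $f$ is supported on the diagonal. Next I would commute $\pi(f)$ with a general $\pi(h)$: now that $f$ is diagonal, $f * h(x,z) = f(x,x)h(x,z)$ and $h * f(x,z) = f(z,z)h(x,z)$, so $f(x,x) = f(z,z)$ holds a.e.\ on $\{h \neq 0\}$; taking $h = \chi_{\hat G^{A_j}}$ over twice-marked patches large enough to witness every pair $(x,z)$ lying in a common leaf shows $x \mapsto f(x,x)$ is leaf-invariant, giving the stated form of $f$.

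The routine parts are the two pointwise convolution identities (essentially already done) and the weak continuity of one-sided multiplication. The step that I expect to be the main obstacle is the quantifier exchange in the converse: passing from ``for each test function the identity holds off a null set'' to ``off one null set the identity holds for all test functions.'' This is handled by the countability of the set of decorated finite patches together with two separation facts --- that the cylinder sets $\hat G_0^A$ both generate the Borel structure of $\hat G_0$ and separate its points, and that enlarging a twice-marked patch eventually records the position of the second marked vector relative to the first within a leaf --- so that a countable family of indicator test functions already suffices. A minor point to keep in mind throughout is that points of $\hat G_0$ and elements of $\hat G$ are $\mathrm{Aut}(T)$-orbits, but since automorphisms are built into the uniqueness of $g_{A,T}$ this causes no difficulty.
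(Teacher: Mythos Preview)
Your proposal is correct and follows essentially the same route as the paper: use the preceding lemma on diagonal convolutions for both directions, and in the converse pass through a countable family of patch indicators to force $f$ onto the diagonal before extracting leaf-invariance of $x\mapsto f(x,x)$.

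Two small remarks. First, your weak-density step in the ``if'' direction is redundant: by Theorem~\ref{linearoperators} every element of $W^*(\hat G)$ already equals $\pi(h)$ for some $h\in L^2(\hat G)$, so checking $f*h=h*f$ for all such $h$ is already commutation with the whole algebra. Second, for the leaf-invariance half of the converse the paper shortcuts your countable family of twice-marked patches by simply choosing a single nowhere-zero $g\in L^2(\hat G)$, which immediately gives $h(x)=h(z)$ a.e.\ from $h(x)g(x,z)=h(z)g(x,z)$; your version works too but is slightly more laborious.
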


\begin{proof}
Suppose that $f(x, y) = 0$ and $f(x, x) = f(x, y)$ for almost every $(T, x, y)$ with $x \not= y$.  Then $f = g_D$ for some $g \in L^2(\hat{G_0})$.  For every $h \in L^2(\hat{G})$ and $(x, z) \in \hat{G}$, we have
\[ g(x) h(x, z) = g(z) h(x, z), \]
and the preceding lemma implies $h * g_D = g_D * h$; i.\,e., $f = g_D \in Z(W^*(\hat{G}))$.

Conversely, suppose that $f \in Z(\hat{G})$.  For every $g \in L^2(\hat{G_0})$, the preceding lemma gives
\[ g(x) f(x, z) = g(z) f(x, z) \]
for almost all $(x, z)$.  This in particular holds when $g = \chi_{(A, x)}$ for some discrete decorated finite patch $(A, x)$.  We can choose a countable set of finite patches $(A_k, v_k)$ such that the $\chi_{(A_k, v_k)}$ generate the $\sigma$-algebra of $\hat{G}$.  If $x \not= z$, then there must exist some $k$ such that $\chi_{(A_k, v_k)}(x) \not= \chi_{(A-k, v_k)}(z)$; otherwise $(T, x)$ would be isomorphic to $(T, z)$, and $x$ and $z$ would be two representatives of the same orbit of $\mathrm{Aut}(T)$.  Hence $f(x, z) = 0$ for almost all $(x, z)$ with $x \not= z$, and we may assume that $f = h_D$ for some $h \in L^2(\hat{G_0})$.

If $g \in L^2(\hat{G})$ is arbitrary, Lemma \ref{centerlemma} gives
\[ h(x) g(x, z) = h(z) g(x, z) \]
for almost all $(x, z)$.  By choosing a $g \in L^2(\hat{G})$ that is nowhere zero, this implies that $h(x) = h(z)$ for almost all $(x, z)$; that is, $h$ is constant on almost every leaf.
\end{proof}

\begin{cor}\label{factorergodic}
The algebra $W^*(\hat{G})$ is a factor if and only if $\nu$ is ergodic.
\end{cor}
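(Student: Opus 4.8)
The plan is to characterize the center $Z(W^*(\hat{G}))$ using Lemma \ref{centerlemma}, which says that $\pi(f) \in Z(W^*(\hat{G}))$ if and only if $f = h_D$ for some $h \in L^2(\hat{G}_0)$ that is constant on almost every leaf. The algebra $W^*(\hat{G})$ is a factor precisely when this center consists only of scalar multiples of the identity $\pi(D)$, i.e., when the only such $h$ are (almost everywhere) constant functions. So the corollary reduces to the statement: every $\nu$-measurable leafwise-invariant function on $\hat{G}_0$ is $\nu$-a.e. constant if and only if $\nu$ is ergodic. This is essentially the definition of ergodicity for the groupoid action on $(\hat{G}_0, \nu)$, so the proof is short.

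First I would make the translation explicit. Given $H \in Z(W^*(\hat{G}))$, by Theorem \ref{linearoperators} write $H = \pi(f)$ with $f \in L^2(\hat{G})$, and by Lemma \ref{centerlemma} conclude $f = h_D$ with $h$ constant on almost every leaf. If $\nu$ is ergodic, then by definition every Borel leafwise-invariant function on $\hat{G}_0$ agrees $\nu$-a.e. with a constant $c$; hence $f = c\,D$ and $H = c\,\pi(D) = c\cdot 1$. Thus $Z(W^*(\hat{G})) = \mathbb{C}\cdot 1$ and $W^*(\hat{G})$ is a factor.

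Conversely, suppose $\nu$ is not ergodic. Then there is a leafwise-invariant Borel set $E \subseteq \hat{G}_0$ with $0 < \nu(E) < \nu(\hat{G}_0)$ (or, in the infinite-measure case, with $\nu(E) > 0$ and $\nu(\hat{G}_0 \setminus E) > 0$); replacing $E$ by an intersection with a set of finite positive measure if necessary, we may take $h = \chi_E \in L^2(\hat{G}_0)$, which is leafwise-invariant but not a.e. constant. Then $g_D := h_D$ satisfies the hypotheses of Lemma \ref{centerlemma}, so $\pi(h_D) \in Z(W^*(\hat{G}))$, yet $\pi(h_D)$ is not a scalar multiple of $\pi(D) = 1$ since $h$ is not a.e. constant. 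Hence $W^*(\hat{G})$ is not a factor.

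The only mild subtlety — the main point to be careful about — is the $\sigma$-finiteness/normalization issue in the backward direction: ``leafwise-invariant'' functions need not lie in $L^2$ when $\nu$ is an infinite transverse measure, so one must intersect a hypothetical invariant set of positive measure with a finite-measure piece to produce an honest element of $L^2(\hat{G}_0)$ that still witnesses nontriviality of the center; invariance of $E$ guarantees the intersection, suitably saturated, still does the job. Apart from that bookkeeping, both directions are immediate from Lemma \ref{centerlemma} and the definition of ergodicity.
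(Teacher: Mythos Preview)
Your proposal is correct and follows essentially the same route as the paper: both directions reduce immediately to Lemma \ref{centerlemma}, identifying the center with diagonal operators $h_D$ for leafwise-invariant $h$, and then invoking the definition of ergodicity. Your version is slightly more explicit in producing the witness $h=\chi_E$ for the non-ergodic direction, whereas the paper simply asserts the existence of a non-constant leafwise-invariant $h\in L^2(\hat{G}_0)$; and your $\sigma$-finiteness caveat is unnecessary here since $\tau(1)=1$ forces $\nu(\hat{G}_0)=1$.
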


\begin{proof}
If $\hat{G}$ is ergodic and $f \in Z(\hat{G})$, the above lemma implies that $f(x, y) = 0$ and $f(x, x) = f(x, y)$ for almost every $(T, x, y)$ with $x \not= y$; that is, $f = h_D$ for some $h \in L^2(\hat{G})$, and $h$ is constant on almost every leaf.  Since $G$ is ergodic, this means that that $h$ is constant, and $f$ is a multiple of $D$.

Conversely, if $\hat{G}$ is not ergodic, there exists a function $h \in L^2(\hat{G}_0)$ that is not constant but is constant on almost every leaf.  Then $h_D \in Z(\hat{G})$, but $h_D$ is not a multiple of $D$.
\end{proof}

\subsection{Trace}

Many of our results will involve traces on $W^*(G)$ and $W^*(\hat{G})$.  These traces are closely related to the transverse measure $\nu$.  First we will show:

\begin{thm}
	The transverse measure $\nu$ is invariant; that is, $\nu^r = \nu^s$ where $r$ denotes the range map, $s$ denotes the source map, and
	\[ \nu_r(S) = \int_{G_0} |S \cap r^{-1}(x) \,d\nu(x) \]
	and 
	\[ \nu_s(S) = \int_{G_0} |S \cap s^{-1}(x) \,d\nu(x). \]
\end{thm}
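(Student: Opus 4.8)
The plan is to reduce the equality $\nu_r = \nu_s$ to a check on a generating family of Borel sets and then exploit the symmetry that is built into the approximating measures. Since a transverse measure is determined by its restriction to the complete transversal $\hat{G}_0$, and holonomy between transversals factors through $\hat{G}$, it suffices to prove the identity for the discrete groupoid $\hat{G}$, i.e. $\int_{\hat{G}_0}|S\cap r^{-1}(x)|\,d\nu(x) = \int_{\hat{G}_0}|S\cap s^{-1}(x)|\,d\nu(x)$ for Borel $S\subseteq\hat{G}$. The sets $\hat{G}^A$, with $A=(A,x_A,y_A,\Omega)$ a twice-marked decorated discrete finite patch, generate the Borel $\sigma$-algebra, and their finite unions form a $\pi$-system: the intersection $\hat{G}^A\cap\hat{G}^B$ is a finite union of sets $\hat{G}^C$, one for each combinatorial way the patches $A$ and $B$ can overlap with coinciding marks. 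Both $\nu_r$ and $\nu_s$ are $\sigma$-finite, so by a Dynkin-class argument it is enough to show $\nu_r(\hat{G}^A)=\nu_s(\hat{G}^A)$ for every such $A$.

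The key algebraic observation is that swapping the two marks is an involution on patches that exchanges the roles of $r$ and $s$: if $\bar{A}=(A,y_A,x_A,\Omega)$, then $\hat{G}^{\bar{A}}=(\hat{G}^A)^{-1}$, hence $s^{-1}(x)\cap\hat{G}^A = \big(r^{-1}(x)\cap\hat{G}^{\bar{A}}\big)^{-1}$ and therefore $\nu_s(\hat{G}^A)=\nu_r(\hat{G}^{\bar{A}})$. So the theorem reduces to proving $\nu_r(\hat{G}^A)=\nu_r(\hat{G}^{\bar{A}})$. Now for $x\in\hat{G}_0$ the fiber-count $|r^{-1}(x)\cap\hat{G}^A|$ is the number of $\mathrm{Aut}$-classes of simplicial embeddings $g\colon A\to T$ carrying the marked tangent vector of $x$ to $g(x_A)$ with $D_T\circ g\in\Omega$; grouping these embeddings by the isomorphism type of the marked decorated sub-patch they carve out shows that $x\mapsto|r^{-1}(x)\cap\hat{G}^A|$ is a bounded, nonnegative-integer-valued function equal to a finite sum of indicators $\chi_{\hat{G}_0^B}$ of one-marked decorated finite patches (the bound depending only on $|A|$ and the degree bound $d$), and likewise for $\hat{G}^{\bar{A}}$. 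Hence the functionals $\nu\mapsto\nu_r(\hat{G}^A)$ and $\nu\mapsto\nu_r(\hat{G}^{\bar{A}})$ are continuous for the topology on transverse measures, so it suffices to verify $(\nu_k)_r(\hat{G}^A)=(\nu_k)_r(\hat{G}^{\bar{A}})$ along any approximating sequence.

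For the sphere construction of Definition~\ref{spheremeasure} this is an elementary counting identity: since $\nu_k$ restricts to the uniform measure on the (finitely many) points of $\hat{G}_0$ lying over each finite triangulation $T$,
\[
(\nu_k)_r(\hat{G}^A)=\sum_{T} q_k(T)\,\frac{1}{m(T)}\,\#\{\,\text{$\mathrm{Aut}$-classes of embeddings } g\colon A\to T \text{ with } D_T\circ g\in\Omega\,\},
\]
where $q_k(T)$ is the $\nu_k$-mass carried by $T$ and $m(T)$ is the number of points of $\hat{G}_0$ over $T$, because summing the $r$-fiber count over the marked vector counts each admissible embedding exactly once. The right-hand side is literally unchanged when $A$ is replaced by $\bar{A}$, giving equality for every $k$, hence in the limit.

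For the leaf construction of Definition~\ref{leafmeasure} the same reduction yields $(\nu_k^x)_r(\hat{G}^A)-(\nu_k^x)_r(\hat{G}^{\bar{A}})=\sum_{[g]}\big(f_k([g](x_A))-f_k([g](y_A))\big)$, the sum over admissible embeddings of $A$ into the fixed leaf $\hat{G}^x$; since for each $[g]$ the tangent vectors $[g](x_A)$ and $[g](y_A)$ lie within the bounded distance $\mathrm{diam}(A)$ of one another, the Reiter-type estimate established in the proof that $\hat{G}$ is amenable forces each summand to $0$. The identity then follows provided the limit may be taken inside the sum, and this interchange is the step I expect to be the main obstacle: the measures $f_k$ do not stay tight on a recurrent leaf, so one cannot simply dominate, and one must instead combine the amenability estimate with a careful truncation of the embedding sum — precisely where recurrence of $\hat{G}^x$, and existence of the patch densities $\nu^x$, are used. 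For every other construction in the paper the theorem is the clean combinatorial identity displayed above.
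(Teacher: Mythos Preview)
Your argument for the sphere construction is correct and matches the paper's own proof almost exactly: the paper also reduces to twice-marked finite patches, observes that $\nu_r(\hat G^A)$ and $\nu_s(\hat G^A)$ are the limiting frequencies of $(A,q)$ and $(A,p)$ respectively, and then uses the elementary counting fact that on a finite sphere each unmarked copy of $A$ carries the same number of copies of $p$ as of $q$, together with the uniformity of $\nu_k$ on each sphere, to conclude equality before passing to the limit. Your $\pi$-system/continuity packaging is slightly more explicit than the paper's, but the content is the same.

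Where you diverge is in scope. The paper's proof \emph{only} treats the sphere construction of Definition~\ref{spheremeasure}; it does not attempt to show invariance for measures built from a single leaf as in Definition~\ref{leafmeasure}. Your additional paragraph on the leaf case therefore goes beyond what the paper establishes. Two remarks on that sketch: first, the Reiter estimate proved in the amenability section bounds $\sum_z |g_n(u,z)-g_n(v,z)|$ for fixed $u,v$, whereas your difference $\sum_{[g]}\big(f_k(g(x_A))-f_k(g(y_A))\big)$ has the roles of source and target reversed, so the amenability argument does not apply directly (you would need reversibility of the walk, or a separate estimate); second, as you yourself flag, the interchange of limit and infinite sum is genuinely delicate on a recurrent leaf and is not justified in the paper either. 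So for the leaf case your proposal is an outline with a real gap, but that gap is not one the paper closes.
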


\begin{proof}
Suppose that $A$ is a finite triangulation with marked tangent vectors $p$ and $q$.  For each $x = (T, v) \in G^0$,
\[ G_0^A \cap r^{-1}(x) \]
is the number of times $(A, q)$ appears in $(T, v)$.  This is 1 if $v$ is contained in a marked patch that looks like $(A, q)$ in $T$, and 0 otherwise.  Furthermore,
\[ \nu_r(G_0^A) = \int_X |G_0^A \cap r^{-1}(x)| \,d\nu(x) \]
is simply the probability of the marked patch $(A, q)$ appearing in a randomly chosen marked triangulation in $G_0$.  Similarly, $\nu_s(G_0^A)$ is the probability of $(A, p)$ appearing in a randomly chosen marked triangulation.

Suppose that a sphere triangulation contains $k$ distinct (though not necessarily disjoint) copies of $A$.  Each copy of $A$ must contain exactly $|Aut(A)|$ copies of $p$ (that is, tangent vectors $p'$ such that $(A, p')$ is isomorphic to $(A, p)$), and exactly $|Aut(A)|$ copies of $q$.  Furthermore, no two distinct copies of $A$ can share a copy of $p$ (respectively, $q$), since knowing the location of $p$ specifies the location of all other faces of $A$.  Since the marked tangent vector is chosen uniformly, $(A, p)$ and $(A, q)$ have the same probability of appearing in the sphere triangulation, and taking limits gives
\[ \nu_r(G_0^A) = \nu_s(G_0^A). \]
Because sets of the form $G_0^A$ generate the $\sigma$-algebra of transversals, $\nu_r = \nu_s$, and $\nu$ is invariant.

As proven in Moore and Schochet \cite{moore} (Theorem 6.30), the invariance of $\nu$ implies the existence of a trace on $W^*(G)$, which we will denote $\tau$, given by
\[ \tau(H) = \int \mathrm{Tr}(H_\ell) \,d\nu(\ell), \]
where $H_\ell$ is the restriction of $H$ to the leaf $\ell$.
\end{proof}

\begin{thm}
Let $u: G \to \mathbb{R}_{\geq 0}$ such that, for each $x$, $u|_{G_x}$ is compactly supported and has total mass 1.  Then
\[ \tau(H) = \int \mathrm{Tr}(u(x, *) (H_{G_x})) \,d\nu(x), \]
independently of $u$.
\end{thm}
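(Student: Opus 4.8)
The plan is to reduce the identity to a statement about the diagonal of the convolution kernel. Write $H=\pi(h)$ with $h\in L^2(G)$ (as in Theorem \ref{linearoperators}, the analogue for $G$ holding by the same argument; for a completely general $H\in W^*(G)$ one takes $h$ continuous and compactly supported, a weakly dense family, and passes to the limit at the end by weak continuity of both sides), and set $h_0(a)=h(\Delta(a))$, a function on $G_0$. Let $\mu$ be the measure on $G_0$ assembled from the transverse measure $\nu$ and the leafwise Riemannian volumes $\lambda_\ell$, so that $\mu=\int\lambda_\ell\,d\nu(\ell)$; this is what the ``$d\nu(x)$'' in the statement abbreviates. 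The trace formula of the previous theorem reads $\tau(H)=\int\mathrm{Tr}(H_\ell)\,d\nu(\ell)$, where $\mathrm{Tr}(H_\ell)$ is the leafwise local trace $\int_\ell h_0\,d\lambda_\ell$, so $\tau(H)=\int_{G_0}h_0\,d\mu$. It therefore suffices to prove
\[ \int_{G_0}\mathrm{Tr}\bigl(u(x,*)\,H_{G_x}\bigr)\,d\mu(x)=\int_{G_0}h_0\,d\mu, \]
since the right-hand side contains no $u$, so independence of $u$ follows for free.

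Next I would identify the integrand pointwise. For $\mu$-a.e.\ $x$ the fibre operator $H_{G_x}$ is leafwise convolution by $h$ on $L^2(G_x)$, which after the identification $G_x\cong\ell_x$ with the leaf through $x$ has kernel $(a,b)\mapsto h(a,b)$ and diagonal $h_0$. Regarding $u(x,*)$ as a compactly supported probability density $u_x$ on $\ell_x$ (this being the content of the normalization hypothesis) acting as a multiplication operator, one checks that $u(x,*)H_{G_x}$ is trace class for a.e.\ such $x$ — split off $\sqrt{u_x}$ and use $h\in L^2$ together with the compact support of $u_x$ to exhibit it as a product of Hilbert--Schmidt operators — and since only the diagonal of its kernel contributes,
\[ \mathrm{Tr}\bigl(u(x,*)H_{G_x}\bigr)=\int_{\ell_x}u_x(a)\,h_0(a)\,d\lambda_{\ell_x}(a), \]
the leafwise average of $h_0$ against $u_x$. (Putting $u_x$ on the left or the right of $H_{G_x}$ gives the same value, since in either case only $h(a,a)$ survives.)

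Now integrate over $x$, disintegrate $\mu$ over leaves, and swap the two leafwise integrations on a single leaf:
\begin{eqnarray*}
\int_{G_0}\mathrm{Tr}\bigl(u(x,*)H_{G_x}\bigr)\,d\mu(x) & = & \int\int_\ell\int_\ell u_x(a)\,h_0(a)\,d\lambda_\ell(a)\,d\lambda_\ell(x)\,d\nu(\ell) \\
& = & \int\int_\ell h_0(a)\Bigl(\int_\ell u_x(a)\,d\lambda_\ell(x)\Bigr)\,d\lambda_\ell(a)\,d\nu(\ell).
\end{eqnarray*}
For each fixed $a$, as $x$ runs over the leaf of $a$ the values $u_x(a)$ sweep out the restriction of $u$ to a single fibre of $G$ through $a$, so the parenthesized inner integral is precisely the total $u$-mass of that fibre, which is $1$ by the normalization hypothesis on $u$. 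Hence the double inner integral collapses, leaving $\int\int_\ell h_0\,d\lambda_\ell\,d\nu(\ell)=\int_{G_0}h_0\,d\mu=\tau(H)$, as wanted.

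The step I expect to need the most care is this last one: one must verify that the fibre of $G$ through $a$ over which $u$ ends up being integrated after the Fubini swap is exactly the fibre on which the hypothesis normalizes $u$ — i.e.\ that the bookkeeping of the range versus source leg of the groupoid is done consistently (here one uses that $W^*(G)$ is built from left convolution, so $H_{G_x}$ acts through the first argument and the $u$-weight attached to a diagonal point $a$ is $u$ evaluated at the $G$-element from $x$ to $a$). It is easy to get the two legs crossed. The remaining points are routine: absolute convergence, from $h\in L^2$ and the compact support of $u_x$, which licenses both the Fubini swap and the trace-class claim; and the reduction of a general $H\in W^*(G)$ to kernel operators by approximation.
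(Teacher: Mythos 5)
Your overall route is the same as the paper's: write the leafwise trace density as the diagonal $h_0$ of the convolution kernel, observe $\tau(H)=\int h_0\,d\mu$, identify $\mathrm{Tr}(u(x,*)H_{G_x})$ as the $u$-weighted average of $h_0$ over the fibre at $x$, and then exchange the two integrations over $G$ so that the normalization of $u$ collapses the extra integral. The paper runs this computation in the opposite direction (it inserts $\int u\,d\lambda^y=1$ under $\tau(H)$ first and then re-fibres), but the content is identical, and your remarks about trace-class justification and approximation of general $H$ by kernel operators are fine.

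The one place your write-up does not close is exactly the step you flagged, and it is not merely bookkeeping. After your Fubini swap the inner integral is $\int_\ell u(x,a)\,d\lambda_\ell(x)$ with the \emph{second} slot frozen at $a$, i.e.\ the mass of $u$ on the fibre $\{(x,a):x\in\ell\}$, whereas the hypothesis (and the expression $\mathrm{Tr}(u(x,*)H_{G_x})$ itself) normalizes $u$ on fibres of the opposite type, $\{(x,y):y\in\ell\}$ with the \emph{first} slot frozen. These are genuinely different families: for $u(x,a)=\mathbf{1}[a=f(x)]$ with $f$ a leaf-preserving map, each fibre of the first kind has $u$-mass $1$ while the fibre through $a$ of the second kind has mass $|f^{-1}(a)|$, which need not be $1$. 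So "which is $1$ by the normalization hypothesis" does not follow from plain leafwise Fubini; you must either read the hypothesis as normalizing $u$ on the fibration opposite to the one carrying the trace (this is how the paper's own proof uses it: its second line inserts $\int u(x,y)\,d\lambda^y(x)=1$ over $G^y$, while its last line traces over $G_x$), or explicitly invoke the invariance of the transverse measure, $\nu_r=\nu_s$, established in the immediately preceding theorem. That invariance is the ingredient that lets one trade the disintegration of the measure on $G$ over one leg for the disintegration over the other, and it is the only nontrivial input in the paper's argument; your proposal never cites it. With that appeal added (and the normalization attached to the correct leg), your argument matches the paper's.
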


\begin{proof}
We note that
\begin{eqnarray*}
	\tau(H) & = & \int \mathrm{Tr}((H_{G^y})(y)) \,d\nu(y) \\
	& = & \int \int u(x, y) \mathrm{Tr}((H_{G^y})(y) \,d\lambda^y(x)) \,d\nu(y) \\
	& = & \int \int u(x, y) \mathrm{Tr}((H_{G_x})(y) \,d\lambda_x(y)) \,d\nu(x) \\
	& = & \int \int \mathrm{Tr}(u(x, y) (H_{G_x})(y) \,d\lambda_x(y)) \,d\nu(x) \\
	& = & \int \mathrm{Tr}(u(x, *) (H_{G_x})) \,d\nu(x).
\end{eqnarray*}that is, $\tau$ can also be given by $\int \mathrm{Tr}(u(x, *) (H_{G_x})) \,d\nu(x)$, independently of $u$.
\end{proof}

We can also define, in an analogous manner, a trace (also denoted $\tau$) on $W^*(\hat{G})$.  In this case, the construction can be made simpler and more explicit.

\begin{defi} [Trace on $W^*(\hat{G})$]
For $H \in W^*(\hat{G})$, let
\[ \tau(H) = \int H(\delta_x)(x) \,d\nu(x). \]
\end{defi}

\begin{thm}
	The function $\tau$ is a trace with total mass 1.
\end{thm}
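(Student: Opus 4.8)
The plan is to verify directly that $\tau(H) = \int H(\delta_x)(x)\,d\nu(x)$ satisfies the three defining properties of a normalized trace: linearity, positivity, the tracial identity $\tau(HK) = \tau(KH)$, and normalization $\tau(1) = 1$. Linearity is immediate since $H \mapsto H(\delta_x)(x)$ is linear for each fixed $x$ and integration preserves linearity. Normalization is equally quick: by the remark following the definition of $D$, $\pi(D) = 1$, and $\pi(D)(\delta_x)(x) = (D * \delta_x)(x) = \delta_x(x) = 1$ (or more carefully, using Theorem \ref{linearoperators} to write the identity as $\pi(D)$), so $\tau(1) = \int 1\,d\nu(x) = \nu(\hat G_0) = 1$, using that $\nu$ is a probability measure. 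For positivity, given $H \geq 0$ write $H = \pi(h)$ via Theorem \ref{linearoperators} and note $H = K^*K$ for some $K = \pi(k) \in W^*(\hat G)$; then $H(\delta_x)(x) = \langle H \delta_x, \delta_x\rangle_{L^2(\hat G_x)} = \|K\delta_x\|^2 \geq 0$, so the integrand is nonnegative and $\tau(H) \geq 0$.

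The substantive step is the tracial identity. Here I would use Theorem \ref{linearoperators} to represent $H = \pi(h)$ and $K = \pi(k)$ with $h, k \in L^2(\hat G)$, and first reduce to the case where $h$ and $k$ are compactly supported (approximating in $L^2$ and using weak/normal continuity of $\tau$, which follows from its representation as an integral against $\nu$ — boundedness of $\tau$ on compactly supported convolvers being guaranteed by the amenability argument bounding the kernels, or more simply by noting $H(\delta_x)(x) = h(x,x)$ so $\tau(\pi(h)) = \int h(x,x)\,d\nu(x)$). With this observation $\tau(\pi(h)) = \int_{\hat G_0} h(x,x)\,d\nu(x)$, the trace becomes the "integral of the diagonal," and the identity to prove is
\[ \int (h * k)(x,x)\,d\nu(x) = \int (k * h)(x,x)\,d\nu(x), \]
i.e.
\[ \int \int h(x,y) k(y,x)\,d\lambda^x(y)\,d\nu(x) = \int \int k(x,y) h(y,x)\,d\lambda^x(y)\,d\nu(x). \]

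The key to this equality is the invariance of the transverse measure $\nu$, established in the preceding theorem ($\nu_r = \nu_s$). The left-hand side is an integral over $\hat G$ of the function $(x,y) \mapsto h(x,y)k(y,x)$ against the measure obtained by fibering over $\hat G_0$ via the range map and using counting measure $\lambda^x$ on each fiber; the right-hand side, after the change of variables $(x,y) \mapsto (y,x)$ (which is exactly the groupoid inversion and swaps range and source), is the integral of the same function against the measure fibered via the source map. Invariance $\nu_r = \nu_s$ says these two measures on $\hat G$ agree, so the two integrals are equal. I would present this carefully using Fubini/Tonelli to justify interchanging the sums and integrals — legitimate once $h, k$ are compactly supported so that all fibers involved are finite sums — and invoking the invariance theorem as the crucial input.

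The main obstacle I anticipate is not any single hard inequality but rather bookkeeping: making the "change of variables is groupoid inversion, which intertwines $r$ and $s$" argument fully rigorous in the discrete-groupoid setting, including handling the $\mathrm{Aut}(T)$-orbit quotients so that counting is done with the correct multiplicities (the same care that appeared in the proof of invariance of $\nu$, where copies of $p$ and $q$ in a sphere triangulation were counted with factors of $|\mathrm{Aut}(A)|$). I would therefore structure the proof so that the measure-theoretic identity is stated as equality of two explicitly defined measures on $\hat G$ and then cite the invariance theorem, rather than manipulating iterated integrals by hand, keeping the automorphism subtleties localized to that one citation. The approximation step reducing to compactly supported kernels should be stated but can be kept brief, since $\tau$ is manifestly normal from its integral representation.
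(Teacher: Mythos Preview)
Your approach is essentially the same as the paper's: both expand $\tau(HK)$ as a double integral over the groupoid and use the invariance $\nu_r = \nu_s$ (the swap $d\lambda_x(y)\,d\nu(x) \leftrightarrow d\lambda_y(x)\,d\nu(y)$) to recognize the result as $\tau(KH)$. The paper is much terser---it performs the computation in the $H(\delta_x)(y)$ notation rather than passing to kernels $h,k$, does not mention positivity at all, and does not bother with the approximation by compactly supported functions you propose; so your extra care about those points is more than the paper demands, but the key idea is identical.
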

\begin{proof}
	We note that
	\begin{eqnarray*}
	\tau(HJ) & = & \int HJ(\delta_x)(x) \,d\nu(x) \\
	& = & \int \int J(\delta_x)(y) H(\delta_y)(x) \,d\lambda_x(y) \,d\nu(x) \\
	& = & \int \int H(\delta_y)(x) J(\delta_x)(y) \,d\lambda_y(x) \,d\nu(y) \\
	& = & \int JH(\delta_y)(y) \,d\nu(y) \\
	& = & \tau(JH),
	\end{eqnarray*}
	and hence $\tau$ satisfies the definition of a trace.  That $\tau(1) = 1$ follows from the definition.
\end{proof}

We can now classify $W^*(\hat{G})$, subject to the conditions that we used to guarantee amenability and ergodicity.

\begin{thm}\label{big1}
Suppose that $\nu$ is such that, for a.e. $x$, $G^x$ is recurrent and $\nu^x = \nu$.  If $\nu(x) > 0$ for some (equivalently, a.e.) $x$, then $W^*(\hat{G})$ is a hyperfinite type $I_{|\hat{G}_0|}$ factor.  Otherwise, $W^*(\hat{G})$ is a hyperfinite type $II_1$ factor.
\end{thm}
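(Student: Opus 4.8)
The plan is to combine three structural facts about $W^*(\hat G)$ — that it is finite, that it is a factor, and that it is hyperfinite — and then to split into cases according to whether the probability measure $\nu$ has atoms.

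\emph{Finiteness.} The trace $\tau(H)=\int H(\delta_x)(x)\,d\nu(x)$ constructed above is a normal tracial state with $\tau(1)=\nu(\hat G_0)=1$, and I claim it is faithful: writing $H=K^*K$ with $K=\pi(k)$ (Theorem \ref{linearoperators}), a short computation gives $\tau(K^*K)=\int \|k(\cdot,x)\|_{L^2(\hat G^x)}^2\,d\nu(x)$, so $\tau(H)=0$ forces $k=0$ and hence $H=0$. A von Neumann algebra admitting a faithful normal tracial state is finite; in particular $W^*(\hat G)$ cannot be of type $I_\infty$, $II_\infty$, or $III$.

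\emph{Factor and hyperfiniteness.} Since $\nu^x=\nu$, the ergodicity theorem above shows $\nu$ is ergodic, so $W^*(\hat G)$ is a factor by Corollary \ref{factorergodic}; together with finiteness it is either $M_n(\mathbb C)$ for some finite $n$ or a type $II_1$ factor. Because $G^x$ is recurrent for a.e.\ $x$, the amenability theorem furnishes a Reiter sequence $(g_n)$ along the leaves. The plan is to use $(g_n)$ to exhaust $W^*(\hat G)$ by an increasing family of finite-dimensional $*$-subalgebras with $\sigma$-weakly dense union — cutting down leafwise operators by the finite-rank approximations associated with $g_n$ and organizing them over a countable generating family of decorated finite patches — i.e.\ to invoke the groupoid form of the Connes--Feldman--Weiss theorem (amenable measured groupoid $\Rightarrow$ injective $\Rightarrow$ AFD von Neumann algebra). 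I expect this to be the main obstacle: turning the abstract Reiter property into an explicit exhaustion compatible with the transverse measure is the one genuinely technical step, while the rest is bookkeeping.

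\emph{The dichotomy.} Suppose first that $\nu(\{x_0\})>0$ for some $x_0$. Invariance of $\nu$ (that is, $\nu_r=\nu_s$, proved above) forces all points of the leaf $[x_0]$ to carry equal mass, and ergodicity forces $\nu$ to be concentrated on this single leaf; since $\nu$ is a probability measure the leaf is therefore finite, and $\nu(\{x\})>0$ for a.e.\ $x$, which is the ``equivalently''. Restricted to its finite support, $\hat G$ is — up to the rare automorphisms, which do not affect the conclusion for a factor — the pair groupoid on a finite set, so $W^*(\hat G)\cong M_{|\hat G_0|}(\mathbb C)$, a type $I_{|\hat G_0|}$ factor, which is trivially hyperfinite. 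If instead $\nu$ is diffuse, then for every $\varepsilon>0$ there is a Borel set $S\subset\hat G_0$ with $0<\nu(S)<\varepsilon$; the leafwise multiplication operator $\chi_S\cdot D$ is then a nonzero projection in $W^*(\hat G)$ with $\tau(\chi_S D)=\nu(S)<\varepsilon$. Hence $W^*(\hat G)$ has nonzero projections of arbitrarily small trace and therefore no minimal projection, which rules out type $I_n$ for finite $n$; being a finite hyperfinite factor that is not of type $I_n$, it is the hyperfinite type $II_1$ factor, completing the argument.
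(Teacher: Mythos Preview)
Your approach matches the paper's: ergodicity gives a factor via Corollary~\ref{factorergodic}, amenability gives hyperfiniteness via the Connes--Feldman--Weiss theorem (the paper cites Anantharaman-Delaroche--Renault for the groupoid version), and the values of the trace $\tau$ pin down the type. The paper's proof is three sentences and leaves both the finiteness and the atomic/diffuse dichotomy entirely to the reader, so your explicit treatment of faithfulness of $\tau$ and of the two cases is a reasonable expansion rather than a different route.

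One remark: your anticipated ``main obstacle'' is not one. The implication \emph{amenable measured groupoid $\Rightarrow$ hyperfinite von Neumann algebra} is precisely the content of the theorem you name, and the paper simply cites it; you should do the same rather than attempt to build the AFD exhaustion from the Reiter sequence by hand. Once that citation is in place, the rest of your outline --- faithfulness of $\tau$, the invariance-plus-ergodicity argument forcing a finite single-leaf support in the atomic case, and the arbitrarily-small-trace projections $\chi_S D$ ruling out type $I_n$ in the diffuse case --- is correct and complete.
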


\begin{proof}
Since $\hat{G}$ is amenable, $W^*(\hat{G})$ is hyperfinite by Delaroche and Renault \cite{delaroche}.  Since $\hat{G}$ is ergodic, $W^*(\hat{G})$ is a factor by corollary \ref{factorergodic}.  The type of $W^*(\hat{G})$ follows by considering the possible values of $\tau$.
\end{proof}

Next we introduce a class of operators that will be particularly well-behaved.
\begin{defi}[Finite hopping range]
Suppose that $H \in W^*(\hat{G})$ and there exists a radius $r$ such that $H_{G^x}(\delta_x)$ is supported in $B_r(x)$ for all $x$, and $B_r(x) \cong B_r(y)$ implies $H_{G^x}(\delta_x) = H_{G^y}(\delta_y)$ where we identify $B_r(x)$ with $B_r(y)$.  Then we say that $H$ has hopping range $r$.

Similarly, suppose that $H \in W^*(G)$ and there exists a radius $r > 1$ with $H_{G^x}(\delta_x)$ supported in $B_r(x)$ for all $x$.  Suppose that, for all $x$ and $y$ with $B_r(x) \cong B_r(y)$ and $f$ supported on $B_1(x)$, then $H_{G^x}(f) = H_{G^y}(f)$ (where we define $f$ on $G^y$ via the isomorphism between $G^x$ and $G^y$).  Then we say that $H$ has hopping range $r$.
\end{defi}

\begin{thm}\label{traceaverage}
If $\nu^x = \nu$ for almost all $x$ and $H \in W^*(\hat{G})$ has finite hopping range, then $\tau(H)$ is the limit of the leafwise trace averaged along a random walk on an almost arbitrary leaf; that is,
\[ \tau(H) = \lim_{n \to \infty} \mathrm{Tr}(\nu^x_n(G^A) H_{G^{x_A}}) \]
for almost all $x$.
\end{thm}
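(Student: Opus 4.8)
The plan is to relate the right-hand side, which is a spatial average of a leafwise trace, to the trace $\tau(H)$ defined in the previous section via $\tau(H) = \int H(\delta_x)(x)\,d\nu(x)$, and to exploit the ergodicity established earlier together with the finite hopping range of $H$. First I would unpack $\mathrm{Tr}(\nu^x_n(\hat G^A) H_{\hat G^{x_A}})$: by Definition \ref{leafmeasure}, $\nu^x_n(\hat G^A) = \int_{\hat G^A \cap \hat G^x} f_n\,d\lambda^x$ where $f_n$ is the $n$-step random-walk distribution from $x$, so summing over all finite patches $A$ this quantity is really $\sum_{y \in \hat G^x} f_n(y)\, (H_{\hat G^x}\delta_y)(y) = \int f_n(y)\, H(\delta_y)(y)\,d\lambda^x(y)$, i.e. the leafwise local trace density of $H$ weighted by the random walk. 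Because $H$ has finite hopping range $r$, the number $(H_{\hat G^x}\delta_y)(y)$ depends only on the isomorphism class of $B_r(y)$, hence is a fixed bounded function $h$ of the patch around $y$; so the expression is $\int f_n(y)\, h(B_r(y))\,d\lambda^x(y)$, a Birkhoff-type average of the local observable $h$ along the random walk.

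Next I would invoke the ergodic averaging already proved. Lemma \ref{ergodiclemma} gives, for $f,g \in L^1(\hat G_0)$,
\[ \lim_{n\to\infty} \int f(x)\int P(x \xrightarrow[n]{} y)\, g(y)\,d\lambda^x(y)\,d\mu(x) = \int f\,d\mu \int g\,d\mu; \]
applied with $g = h$ (the bounded local observable, which lies in $L^1$ after suitable normalization / truncation by finitely many patch types) and $f$ a test function, this identifies the weak limit of $y \mapsto \int P(x\xrightarrow[n]{}y) h(y)\,d\lambda^x(y)$ with the constant $\int h\,d\mu$. Under the hypothesis $\nu^x = \nu$ for a.e.\ $x$, the frequencies $\phi(A) = \nu(\hat G^A)$ are well-defined, and $\int h\,d\mu = \sum_A h(A)\,\nu(\hat G^A)$. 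But that sum is exactly $\int H(\delta_x)(x)\,d\nu(x) = \tau(H)$, since $H(\delta_x)(x)$ depends only on the radius-$r$ patch at $x$ through $h$. So for a.e.\ $x$ the averaged leafwise trace converges to $\tau(H)$.

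The main obstacle is passing from the $L^1$/weak convergence delivered by Lemma \ref{ergodiclemma} to genuine pointwise (a.e.\ $x$) convergence of the random-walk average $\int f_n(y) h(y)\,d\lambda^x(y)$. Lemma \ref{ergodiclemma} is phrased as convergence of an integral against a test function $f$, not as an a.e.\ statement about the inner integral. To upgrade it I would use the recurrence hypothesis on $G^x$: on a recurrent leaf the random walk is conservative, so the Cesàro-averaged transition kernel $g_n(x,\cdot) = \frac1n\sum_{k=1}^n P(x\xrightarrow[k]{}\cdot)$ behaves like the Reiter sequence from the amenability proof (Theorem 6), and one can show $\int g_n(x,y) h(y)\,d\lambda^x(y)$ stabilizes because changing the base point of the walk changes this average by a vanishing amount (the same hitting-time estimate used there). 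Combined with ergodicity this forces the common limit to be the $\mu$-average $\tau(H)$. A secondary technical point is justifying the interchange of the (infinite) sum over patch types $A$ with the limit in $n$; this is handled by the uniform bound $\|h\|_\infty < \infty$ coming from finite hopping range and by approximating $h$ by finitely supported sums of $\chi_{\hat G^A}$, exactly as in the density argument at the end of Lemma \ref{ergodiclemma}.
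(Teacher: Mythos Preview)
Your unpacking of the right-hand side and the reduction, via finite hopping range, to a bounded local observable $h$ depending only on the radius-$r$ patch is exactly right and matches the paper. The divergence comes after that: you treat the pointwise (a.e.\ in $x$) convergence of the random-walk averages as the ``main obstacle'' and reach for Lemma~\ref{ergodiclemma} plus a recurrence/Reiter upgrade. But that obstacle is already dissolved by the hypothesis. By Definition~\ref{leafmeasure}, $\nu^x(A)$ is \emph{defined} as $\lim_n \nu^x_n(A)$, so the assumption ``$\nu^x = \nu$ for a.e.\ $x$'' says precisely that for a.e.\ $x$ and every finite patch $A$ one has $\nu^x_n(\hat G^A) \to \nu(\hat G^A)$. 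That is the pointwise statement you are trying to manufacture.

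The paper's proof accordingly skips Lemma~\ref{ergodiclemma} entirely: once $H(\delta_x)(x)$ is seen to depend only on the radius-$r$ patch, one writes $\tau(H) = \int H_{G^{x_A}}(\delta_{x_A})(x_A)\,d\nu(G^A)$ (an integral, or finite sum when $\mathcal D$ is discrete, over patch types), replaces $\nu$ by $\nu^x$ for an a.e.\ $x$, and then replaces $\nu^x$ by $\lim_n \nu^x_n$ by definition. Three lines. Your route is not wrong in spirit, but the detour through an $L^1$-to-pointwise upgrade is both unnecessary and, as sketched, not obviously valid: extracting a genuine pointwise ergodic theorem for random-walk averages from the Reiter asymptotic invariance is a real theorem (cf.\ random-walk ergodic theorems \`a la Nevo--Stein or Lindenstrauss), not something the hitting-time estimate alone delivers. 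The cleaner fix is simply to read the hypothesis for what it is.
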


\begin{proof}
Since $H$ has finite hopping range, $H(\delta_x)(x)$ is constant on $G^A$ for $A$ a patch of radius at least $r$.  It follows that
\[ \tau(H) = \int H_{G^{x_A}}(\delta_{x_A})(x_A) \,d\nu(G^A), \]
where the integral is over all patches $A$ of radius $r$, and $x_A$ is an arbitrary element of $G^A$.  (If $\mathcal{D}$ is discrete, this integral is just a sum.)  If $x$ is chosen such that $\nu^x = \nu$, we can write
\begin{eqnarray*}
\tau(H) & = & \int H_{G^{x_A}}(\delta_{x_A})(x_A) \,d\nu^x(G^A) \\
& = & \lim_{n \to \infty} \int H_{G^{x_A}}(\delta_{x_A})(x_A) \,d\nu^x_n(G^A) \\
& = & \lim_{n \to \infty} \mathrm{Tr}(\nu^x_n(G^A) H_{G^{x_A}}),
\end{eqnarray*}
which is true for almost all $x$.
\end{proof}

This argument can also be applied to any other sequence of measures that converges to $\nu$.  In particular, we have this theorem, whose proof is analogous to Theorem \ref{traceaverage}:

\begin{thm}
	If $\nu = \lim_{n \to \infty} \nu_n$ and $H \in W^*(\hat{G})$ has finite hopping range, then
	\[ \tau(H) = \lim_{n \to \infty} \mathrm{Tr}(\nu_n(G^A) H_{G^{x_A}}). \]
\end{thm}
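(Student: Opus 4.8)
The plan is to repeat the three-step argument used for Theorem~\ref{traceaverage}, with the only change being that the convergence $\nu^x_n \to \nu$ obtained there from random walks and the ergodic average is now supplied directly by the hypothesis $\nu = \lim_n \nu_n$. First I would fix a hopping range $r$ for $H$ and observe, exactly as in the proof of Theorem~\ref{traceaverage}, that finite hopping range forces the diagonal value $H_{G^x}(\delta_x)(x)$ to depend only on the isomorphism type of the decorated ball $B_r(x)$; hence it is constant on $G^A$ for every decorated discrete finite patch $A$ of radius at least $r$, and I would call this common value $h(A)$, noting $|h(A)| \le \|H\|$.

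Next I would write $\tau$ as an integral over patches. Since the sets $G^A$, with $A$ ranging over (isomorphism classes of exact) radius-$r$ decorated patches, partition $\hat{G}_0$ up to a $\nu$-null set, the definition of the trace gives
\[ \tau(H) = \int h(A) \, d\nu(G^A) = \mathrm{Tr}(\nu(G^A) H_{G^{x_A}}), \]
with $x_A$ any chosen representative of $G^A$. The definition of convergence of transverse measures gives $\nu_n(\hat{G}_0^A) \to \nu(\hat{G}_0^A)$ for every such $A$. When $\mathcal{D}$ is discrete there are only finitely many radius-$r$ patch types, because vertex degrees are bounded by $d$, so the integral above is a finite sum $\sum_A h(A)\,\nu(G^A)$, and I can pass the limit inside term by term to conclude
\[ \tau(H) = \lim_{n \to \infty} \sum_A h(A)\,\nu_n(G^A) = \lim_{n \to \infty} \mathrm{Tr}(\nu_n(G^A) H_{G^{x_A}}). \]

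The step I expect to be the main obstacle is the interchange of limit and integral when $\mathcal{D}$ is infinite. In that case, restricting $\nu$ and the $\nu_n$ to radius-$r$ patches produces finite measures on a fixed compact space (a finite disjoint union, over combinatorial types $A_i$, of copies of $\prod_{E(A_i)} \mathcal{D}$) converging setwise, with $h$ a bounded measurable function on it; the conclusion then follows from a standard uniform-integrability argument (e.g.\ via the Vitali--Hahn--Saks theorem), which rules out escape of mass into $\nu$-small Borel sets of decoration configurations carrying a non-vanishing fraction of each $\nu_n$. Since in all of the constructions of $\nu$ we have in mind $\mathcal{D}$ is discrete, only the elementary finite-sum version is needed in practice, and the whole argument is genuinely a routine variant of Theorem~\ref{traceaverage}.
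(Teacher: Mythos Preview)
Your proposal is correct and follows essentially the same route as the paper, which simply states that the proof is analogous to Theorem~\ref{traceaverage}: reduce $\tau(H)$ to an integral of the patch-dependent diagonal value over radius-$r$ patches, then replace $\nu$ by $\nu_n$ and take the limit. You are in fact more careful than the paper about justifying the interchange of limit and integral in the non-discrete $\mathcal{D}$ case; the paper's proof of Theorem~\ref{traceaverage} passes the limit through the integral without comment.
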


\subsection{Density of states}

\begin{defi}[Density of states]
For $H \in W^*(\hat{G})$ and $\phi \in C_c(\mathbb{R})$, define $\kappa_H(\phi) = \tau(\phi(H))$.  We call $\kappa_H$ the density of states of $H$.
\end{defi}

\begin{thm}\label{big2}
If $H$ has finite hopping range, and $\nu'_n$ is a sequence of measures converging to $\nu$, then we can obtain the density of states $\kappa_H(\phi)$ as a limit of the leafwise local trace $\mathrm{Tr} (\phi(H))$ averaged with respect to $\nu'_n$; that is,
\[ \kappa_H(\phi) = \lim_{n \to \infty} \mathrm{Tr} \left(\nu'_n(\hat{G}^A) \phi(H) \right). \]
\end{thm}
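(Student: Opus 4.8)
The plan is to reduce to the case where $\phi$ is a polynomial, for which the variant of Theorem \ref{traceaverage} for an arbitrary convergent sequence of measures applies directly, and then to pass to a general $\phi \in C_c(\mathbb{R})$ by uniform polynomial approximation. Throughout, $H$ is a bounded self-adjoint element of $W^*(\hat{G})$, so $\sigma(H) \subseteq [-M, M]$ with $M = \|H\|$; each leafwise restriction $H_{\hat{G}^x}$ is self-adjoint with $\|H_{\hat{G}^x}\| \leq M$; and the functional calculus respects the leafwise decomposition, $\phi(H)_{\hat{G}^x} = \phi(H_{\hat{G}^x})$. We will also use the elementary bound $|\psi(H)(\delta_x)(x)| \leq \|\psi(H_{\hat{G}^x})\|$ for any bounded Borel $\psi$, which follows from Cauchy--Schwarz, and the inequality $|\tau(\cdot)| \leq \|\cdot\|$ valid because $\tau$ is a trace of total mass $1$.

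First I would treat the case $\phi = p$, a polynomial. Operators of finite hopping range in $W^*(\hat{G})$ form an algebra: the identity $\pi(D)$ has hopping range $0$; and if $H_1, H_2$ have hopping ranges $r_1, r_2$, then $H_2(\delta_x)$ is supported in $B_{r_2}(x)$ and depends, up to isomorphism, only on $B_{r_2}(x)$, while applying $H_1$ enlarges the support by at most $r_1$ and preserves this locality, so $H_1 H_2$ has hopping range $r_1 + r_2$. Hence $p(H)$ has finite hopping range (at most $r\deg p$), and the analogue of Theorem \ref{traceaverage} for a sequence $\nu'_n \to \nu$ gives
\[ \tau(p(H)) = \lim_{n \to \infty} \mathrm{Tr}\left(\nu'_n(\hat{G}^A) p(H)\right). \]

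Now let $\phi \in C_c(\mathbb{R})$ and fix $\epsilon > 0$. By the Stone--Weierstrass theorem there is a polynomial $p$ with $\sup_{[-M,M]} |\phi - p| < \epsilon$, so $\|\phi(H) - p(H)\| < \epsilon$ and, since $\sigma(H_{\hat{G}^x}) \subseteq [-M,M]$, also $\|\phi(H_{\hat{G}^x}) - p(H_{\hat{G}^x})\| < \epsilon$ for every $x$. Consequently $|\kappa_H(\phi) - \tau(p(H))| = |\tau((\phi-p)(H))| \leq \|\phi(H) - p(H)\| < \epsilon$. Reading $\mathrm{Tr}(\nu'_n(\hat{G}^A)\psi(H))$ as the $\nu'_n$-average of the leafwise local trace density $\psi(H_{\hat{G}^x})(\delta_x)(x)$ --- which is exactly the quantity of Theorem \ref{traceaverage} when $\psi(H)$ has finite hopping range --- the elementary bound above, together with the fact that $C := \sup_n \nu'_n(\hat{G}_0) < \infty$ (indeed $\nu'_n(\hat{G}_0) \to \nu(\hat{G}_0) = 1$), gives
\[ \left| \mathrm{Tr}\left(\nu'_n(\hat{G}^A)\phi(H)\right) - \mathrm{Tr}\left(\nu'_n(\hat{G}^A)p(H)\right) \right| \leq \int \left| \left((\phi-p)(H_{\hat{G}^x})\right)(\delta_x)(x) \right| d\nu'_n(x) \leq C\epsilon, \]
a bound uniform in $n$.

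Finally I would combine the three estimates:
\begin{eqnarray*}
\left| \kappa_H(\phi) - \mathrm{Tr}\left(\nu'_n(\hat{G}^A)\phi(H)\right) \right| & \leq & \left| \kappa_H(\phi) - \tau(p(H)) \right| \\
& & + \left| \tau(p(H)) - \mathrm{Tr}\left(\nu'_n(\hat{G}^A)p(H)\right) \right| \\
& & + \left| \mathrm{Tr}\left(\nu'_n(\hat{G}^A)p(H)\right) - \mathrm{Tr}\left(\nu'_n(\hat{G}^A)\phi(H)\right) \right|.
\end{eqnarray*}
The first and third terms are at most $\epsilon$ and $C\epsilon$, uniformly in $n$, while the middle term tends to $0$ by the polynomial case; hence $\limsup_{n\to\infty} |\kappa_H(\phi) - \mathrm{Tr}(\nu'_n(\hat{G}^A)\phi(H))| \leq (1+C)\epsilon$, and letting $\epsilon \to 0$ proves the theorem. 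The step I expect to be the main obstacle is the one hidden in the notation: $\mathrm{Tr}(\nu'_n(\hat{G}^A)\phi(H))$ must be given meaning even though $\phi(H)$ need no longer have finite hopping range --- as the integral against $\nu'_n$ of the leafwise diagonal value of $\phi(H_{\hat{G}^x})$ --- and then the polynomial approximation error must be controlled \emph{uniformly in $n$}; this uniformity is precisely what the estimate $\|\phi(H_{\hat{G}^x}) - p(H_{\hat{G}^x})\| < \epsilon$, holding simultaneously for all $x$, together with the uniform bound on the masses $\nu'_n(\hat{G}_0)$, provides.
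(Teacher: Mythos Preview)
Your proof is correct and follows essentially the same approach as the paper: approximate $\phi$ uniformly on $[-\|H\|,\|H\|]$ by a polynomial $p$, use that $p(H)$ has finite hopping range so the variant of Theorem \ref{traceaverage} applies, and then control the two error terms via the triangle inequality. Your treatment is in fact a bit more careful than the paper's in making explicit the interpretation of $\mathrm{Tr}(\nu'_n(\hat{G}^A)\phi(H))$ for general $\phi$ and in tracking the total mass $C=\sup_n\nu'_n(\hat{G}_0)$, but the strategy is identical.
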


\begin{proof}
Given any continuous $\phi: \mathbb{R} \to \mathbb{R}$ and $\epsilon > 0$, there exists a Weierstrass polynomial $p$ such that $|\phi(t) - p(t)| < \epsilon / 3$ for $|t| < \|H\|$.  Let $\hat{\phi} = \phi - p$.  Now $p(H)$ is a polynomial of $H$ and therefore also has finite hopping range.  We can therefore write

\begin{eqnarray*}
\kappa_H(p) & = & \tau(p(H)) \\
& = & \lim_{n \to \infty} \mathrm{Tr} \left(\nu^x_n(\hat{G}^A) p(H)_{\hat{G}^{x_A}} \right).
\end{eqnarray*}
for almost all $x$.  In other words, for sufficiently large $n$ and almost all $x$, we have
\[ \left| \kappa_H(p) - \mathrm{Tr} \left(\nu'_n(\hat{G}^A) p(H)_{\hat{G}^{x_A}} \right) \right| < \frac{\epsilon}{3}. \]
Furthermore,
\begin{eqnarray*}
\left| \kappa_H(\phi) - \kappa_H(p) \right| & = & \| \tau(\phi(H)) - \tau(p(H)) \| \\
& = & | \tau(\hat{\phi}) | \\
& \leq & \frac{\epsilon}{3}
\end{eqnarray*}
and
\begin{eqnarray*}
\left| \mathrm{Tr} \left(\nu'_n(\hat{G}^A) \phi(H)_{\hat{G}^{x_A}} \right) - \mathrm{Tr} \left(\nu'_n(\hat{G}^A) p(H)_{\hat{G}^{x_A}} \right) \right| & = & \left| \mathrm{Tr} \left(\nu'_n(\hat{G}^A) \hat{\phi}(H)_{\hat{G}^{x_A}} \right) \right| \\
& \leq & \left| \mathrm{Tr} \left(\nu'_n(\hat{G}^A) \epsilon / 3 \right) \right| \\
& \leq & \frac{\epsilon}{3},  
\end{eqnarray*}
and we conclude that
\[ \left| \kappa_H(\phi) - \mathrm{Tr} \left( \nu'_n(\hat{G}^A) \phi(H)_{\hat{G}^{x_A}} \right) \right| \leq \epsilon. \]
It follows that
\[ \kappa_H(\phi) = \lim_{n \to \infty} \mathrm{Tr} \left(\nu'_n(\hat{G}^A) \phi(H)_{\hat{G}^{x_A}} \right). \]
\end{proof}

Two natural choices for the $\nu'_n$ are the measures $\nu_n$ on spheres and the measures $\nu^x_n$ obtained from a random walk on a generic leaf (assuming $G$ is ergodic).  Hence we have the following corollaries:

\begin{cor}
If $H$ has finite hopping range and $G$ is ergodic, then we can obtain $\kappa_H(\phi)$ as a limit of $\mathrm{Tr} (\phi(H))$ averaged along a random walk on an almost arbitrary leaf; that is,
\[ \kappa_H(\phi) = \lim_{n \to \infty} \mathrm{Tr} \left( \nu^x_n(\hat{G}^A) \phi(H)_{\hat{G}^{x_A}} \right) \]
for almost all $x$.
\end{cor}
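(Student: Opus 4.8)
The plan is to read this off the general version of Theorem \ref{big2} by feeding in the ($x$-dependent) sequence $\nu'_n = \nu^x_n$, where $\nu^x_n$ is the random-walk distribution after $n$ steps on the leaf $\hat{G}^x$ (Definition \ref{leafmeasure}). Since $H$ already has finite hopping range, everything reduces to one point: for $\nu$-almost every $x$, the patch densities $\nu^x_n(\hat{G}^A)$ converge to $\nu(\hat{G}^A)$ for every decorated discrete finite patch $A$, i.e. $\nu^x = \nu$ a.e. This is the pointwise strengthening of the mean identity already contained in Lemma \ref{ergodiclemma}, and it is the converse of the implication established in the Ergodicity theorem.

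I would prove it as follows. First fix a countable family $A_1, A_2, \ldots$ of decorated finite patches whose indicators $\chi_{\hat{G}^{A_k}}$ generate the Borel $\sigma$-algebra of $\hat{G}_0$ and span a dense subspace of $L^1(\hat{G}_0, \nu)$ (as in Definition \ref{spheremeasure}); since a countable union of null sets is null, it suffices to handle each $A_k$ separately. Fix $k$, put $F = \chi_{\hat{G}^{A_k}}$, and let $P$ be the leafwise random-walk operator, so that $\nu^x_n(\hat{G}^{A_k}) = (P^n F)(x)$. The transverse measure $\nu$ is invariant (Trace subsection), so $P$ is a positive $L^1$-contraction for an $\nu$-equivalent stationary measure; because a bounded $P$-harmonic function is constant on each (recurrent) leaf, the $P$-fixed functions are exactly the leafwise-invariant ones, which reduce to constants by ergodicity. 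A pointwise ergodic theorem for positive $L^1$-contractions (e.g.\ the Dunford--Schwartz theorem) then gives, for $\nu$-a.e.\ $x$, convergence of the time averages $\frac1N\sum_{n=1}^N (P^n F)(x)$ to $\int F\, d\nu = \nu(\hat{G}^{A_k})$; and since the tangent-vector graph contains triangles and hence is not bipartite, the leafwise walk is aperiodic, which upgrades this to convergence of $(P^n F)(x)$ itself. Intersecting over $k$ yields a full-measure set of $x$ with $\nu^x = \nu$.

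For any such $x$, the sequence $\nu'_n := \nu^x_n$ converges to $\nu$, so Theorem \ref{big2} applies directly and gives $\kappa_H(\phi) = \lim_{n\to\infty} \mathrm{Tr}(\nu^x_n(\hat{G}^A)\,\phi(H)_{\hat{G}^{x_A}})$ for $\nu$-almost every $x$, as claimed. The only genuinely non-formal ingredient is the pointwise ergodic theorem step: one must pass from the mean statement of Lemma \ref{ergodiclemma} to an almost-everywhere statement, work with the non-self-adjoint leafwise walk (whose reversing measure weights each tangent vector by its degree in the tangent-vector graph), and account for the difference between averaging over the first $n$ steps and taking exactly $n$. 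If the direct form is inconvenient, a clean fallback is to run the whole argument with the time-averaged exhaustion $g_n = \frac1n\sum_{k=1}^n P^k$ used in the amenability proof, for which the classical pointwise ergodic theorem for positive $L^1$-contractions applies with no aperiodicity hypothesis needed.
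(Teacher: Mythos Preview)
Your overall strategy---feed $\nu'_n = \nu^x_n$ into Theorem \ref{big2}---is exactly the paper's: the corollary is stated there without proof, as an immediate specialization of the theorem. You go further than the paper by trying to verify that ergodicity alone forces $\nu^x_n \to \nu$ for a.e.\ $x$, which the paper simply takes for granted. This is the right thing to worry about, since the ergodicity theorem in the paper runs in the opposite direction (it \emph{assumes} $\nu^x = \nu$ a.e.\ and deduces ergodicity).

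Your Ces\`aro-average argument via a Dunford--Schwartz/Hopf pointwise ergodic theorem is sound in outline: bounded $P$-harmonic functions are constant on recurrent leaves (Liouville property), hence constant a.e.\ by ergodicity, so the ergodic averages of $F$ converge a.e.\ to $\int F\,d\nu$. The step that does not hold up is the upgrade from $\tfrac{1}{N}\sum_{n=1}^N P^nF$ to $P^nF$ itself by appealing to aperiodicity. For a finite-state irreducible aperiodic chain that is the classical convergence theorem, but on an infinite recurrent graph aperiodicity by itself does \emph{not} yield $P^nF(x)\to\int F\,d\nu$ for bounded $F$; one needs a genuine mixing input (a local limit theorem, a spectral gap, or the like), which is not available in this generality. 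Your own hedging on this point is well placed, and your fallback---replace the raw $f_k$ in Definition \ref{leafmeasure} by the time-averaged exhaustion $g_n=\tfrac{1}{n}\sum_{k=1}^n P^k$ already used in the amenability proof---is the clean fix: with that choice of $\nu^x_n$ the pointwise ergodic theorem applies directly and no aperiodicity hypothesis is needed. Since Definition \ref{leafmeasure} explicitly allows an arbitrary unit-mass sequence $(f_k)$, this is a legitimate reading of the corollary and is consistent with what the paper appears to intend.
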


\begin{cor}
If $H$ has finite hopping range, then we can obtain $\kappa_H(\phi)$ as a limit of $\mathrm{Tr} (\phi(H))$ averaged on sphere triangulations; that is,
\[ \kappa_H(\phi) = \lim_{n \to \infty} \mathrm{Tr} \left( \nu_n(\hat{G}^A) \phi(H). \right) \]
\end{cor}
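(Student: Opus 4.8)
The plan is to observe that this is an immediate specialization of Theorem~\ref{big2}. The sequence of measures $\nu_n$ in Definition~\ref{spheremeasure} — the discrete uniform measures on the finite sphere triangulations — is, by construction, a sequence converging to $\nu$ in the sense of the definition of convergence of transverse measures; indeed, this is precisely how the measures $\nu_u$, $\nu_f$, $\nu_s$, and $\nu_h$ were obtained, with the diagonalization argument of Definition~\ref{spheremeasure} guaranteeing convergence (along a subsequence, when necessary). So I would simply set $\nu'_n = \nu_n$ and invoke Theorem~\ref{big2}.

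Concretely, first I would check that the hypotheses of Theorem~\ref{big2} hold: $H$ has finite hopping range by assumption, $\phi \in C_c(\mathbb{R})$ by the definition of the density of states, and $\nu'_n := \nu_n \to \nu$ because $\nu$ is the limit of the $\nu_n$ by construction, i.e. $\nu_n(\hat{G}^A) \to \nu(\hat{G}^A)$ for every decorated discrete finite patch $A$. Then Theorem~\ref{big2} applied to this sequence yields directly
\[ \kappa_H(\phi) = \lim_{n \to \infty} \mathrm{Tr}\!\left(\nu_n(\hat{G}^A)\,\phi(H)_{\hat{G}^{x_A}}\right), \]
which is the asserted formula. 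Nothing new is needed for the passage from polynomials in $H$ (which inherit finite hopping range) to arbitrary continuous $\phi$ via Weierstrass approximation, since that step is already carried out inside the proof of Theorem~\ref{big2}.

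There is essentially no obstacle here: the only points requiring a word of care are that the quantity $\phi(H)_{\hat{G}^{x_A}}$ is independent of the choice of representative $x_A \in \hat{G}^A$ — which is exactly what the finite-hopping-range hypothesis provides, through the same reasoning used in the proof of Theorem~\ref{traceaverage} — and that $\nu_n(\hat{G}^A) \to \nu(\hat{G}^A)$, which is the definition of $\nu_n \to \nu$. If one wishes to make the statement's practical content explicit, I would add a closing remark that, when $\mathcal{D}$ is discrete, $\mathrm{Tr}\!\left(\nu_n(\hat{G}^A)\,\phi(H)\right)$ is literally the normalized eigenvalue-counting expression for $\phi(H)$ restricted to the finite sphere triangulation carrying $\nu_n$, so that this corollary realizes the approximation of the IDS by finite-dimensional computations promised in the introduction.
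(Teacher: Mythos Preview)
Your proposal is correct and matches the paper's approach exactly: the paper states this corollary without proof, prefacing it only with the remark that the measures $\nu_n$ on spheres are a natural choice of $\nu'_n$ in Theorem~\ref{big2}. Your added observations about well-definedness and the practical interpretation are fine elaborations but not required.
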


The latter provides a convenient means of approximating $\kappa_H$, since $\mathrm{Tr} \phi(H)$ on a finite sphere triangulation can be calculated by computing eigenvalues.  We also wish to extend this result to some operators that do not have finite hopping range.  Recall that $D \in L^2(\hat{G})$ denotes the diagonal function.

\begin{lem}
We have
\[ \langle H D, D \rangle = \tau(H). \]
\end{lem}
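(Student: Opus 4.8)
The plan is to reduce both sides of the identity to the same integral over the units, using the leafwise convolution description of operators in $W^*(\hat{G})$. First I would apply Theorem~\ref{linearoperators} to write $H = \pi(h)$ with $h = H(D) \in L^2(\hat{G})$, so that $HD = h$ immediately from the definition of $h$. Then the left-hand side is the $L^2(\hat{G})$-pairing
\[ \langle HD, D\rangle = \langle h, D\rangle = \int \int h(x,y)\,\overline{D(x,y)}\, d\lambda^x(y)\, d\nu(x). \]
Because $\hat{G}$ is discrete, each $\lambda^x$ is counting measure, and since $D(x,y)$ is supported on, and equal to $1$ at, the unit $(x,x)$ of the leaf $\hat{G}^x$, the inner integral collapses to the value $h(x,x)$ of $h$ at that unit. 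Hence $\langle HD, D\rangle = \int h(x,x)\, d\nu(x)$.

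Next I would identify this integral with $\tau(H)$. Under the direct-integral decomposition $L^2(\hat{G}) = \int^{\oplus}_{\hat{G}_0} L^2(\hat{G}^x)\, d\nu(x)$, the diagonal function $D$ is precisely the measurable field $x \mapsto \delta_x$, where $\delta_x \in L^2(\hat{G}^x)$ is the indicator of the unit; and since $H = \pi(h)$ acts leafwise, its $x$-component $H_x = H|_{\hat{G}^x}$ satisfies $(H_x \delta_x)(x) = \int h(x,y)\,\delta_x(y)\, d\lambda^x(y) = h(x,x)$. Comparing with the definition $\tau(H) = \int H(\delta_x)(x)\, d\nu(x)$ then gives $\langle HD, D\rangle = \int h(x,x)\, d\nu(x) = \tau(H)$. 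Equivalently, without naming $h$, one may note that $\langle HD, D\rangle = \int \langle H_x\delta_x, \delta_x\rangle\, d\nu(x)$ and that $\langle H_x\delta_x, \delta_x\rangle = (H_x\delta_x)(x)$ because $\delta_x$ is a unit-mass atom at the unit.

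I expect the only genuine subtlety to be the bookkeeping that the $\delta_x$ appearing in the definition of $\tau$ really is the $x$-component of $D$ in the direct-integral decomposition, together with the verification that $H(\delta_x)$ is computed by the leafwise convolution of Theorem~\ref{linearoperators}. This is exactly where the discreteness of $\hat{G}$ is needed, since that is what makes $\lambda^x$ atomic at the unit and hence $\langle \psi, \delta_x\rangle = \psi(x)$ for $\psi \in L^2(\hat{G}^x)$. Everything else is a routine unwinding of definitions, and no new estimates are required.
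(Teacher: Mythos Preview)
Your proposal is correct and follows essentially the same approach as the paper: expand the $L^2(\hat{G})$ inner product as a double integral, use that $D$ is supported on the diagonal to collapse the leafwise integral to the value $(HD)(x,x)=H(\delta_x)(x)$, and recognize the resulting integral over $\hat{G}_0$ as the definition of $\tau(H)$. Your invocation of Theorem~\ref{linearoperators} to name $h=H(D)$ is a harmless extra step---the paper simply writes $(HD)(x,x)$ directly---and your ``equivalently'' remark is in fact exactly the paper's argument.
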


\begin{proof}
We have
\begin{eqnarray*}
	\langle H D, D \rangle & = & \int \int (H D)(x, y) D(x, y) \,d\lambda_x(y) \,d\mu(x) \\
	& = & \int (H D)(x, x) \,d\mu(x) \\
	& = & \int H(\delta_x)(x) \,d\mu(x),
\end{eqnarray*}
which is precisely our definition of $\tau(H)$ on $\hat{G}$.
\end{proof}

\begin{cor}
Suppose that $H_n, H \in W^*(\hat{G})$ such that $H_n \to H$ weakly; that is,
\[ \langle H_n x, y \rangle = \langle H x, y \rangle \]
for all $x, y \in L^2(\hat{G})$.  Then $\tau(H_n) \to \tau(H)$.
\end{cor}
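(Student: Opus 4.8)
The plan is to reduce the statement immediately to the preceding lemma, which identifies the trace of any operator in $W^*(\hat{G})$ with a single inner product against the diagonal function $D$. Concretely, recall that we observed earlier $D \in L^1(\hat{G}) \cap L^2(\hat{G})$, so $D$ is a legitimate vector in the Hilbert space $L^2(\hat{G})$ on which $W^*(\hat{G})$ acts, and the lemma gives $\tau(K) = \langle K D, D \rangle$ for every $K \in W^*(\hat{G})$. Since the $H_n$ and $H$ all lie in $W^*(\hat{G})$, this identity applies uniformly to all of them.

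The argument then has essentially one step: apply the definition of weak convergence of $H_n$ to $H$ with the particular test vectors $x = y = D$. This yields
\[ \tau(H_n) = \langle H_n D, D \rangle \longrightarrow \langle H D, D \rangle = \tau(H), \]
as desired. (Here I read the hypothesis ``$\langle H_n x, y\rangle = \langle Hx, y\rangle$'' as the intended weak-convergence statement $\langle H_n x, y\rangle \to \langle Hx, y\rangle$ for all $x, y \in L^2(\hat{G})$; if it is meant literally as equality for all $n$, the conclusion is trivial in the same way.)

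There is no real obstacle here; the only point requiring any care is that $D$ be a genuine element of $L^2(\hat{G})$ rather than merely a formal symbol, which was already verified in the remark following the definition of the diagonal function. It is perhaps worth noting explicitly in the write-up that this is exactly why the lemma was stated with the $L^2$ inner product: it packages $\tau$ as evaluation of a weakly continuous functional at a fixed vector, making weak-to-trace continuity automatic. One could also remark that the same reasoning shows $\tau$ is normal, but that is not needed for the corollary as stated.
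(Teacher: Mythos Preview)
Your proposal is correct and matches the paper's approach exactly: the paper states this corollary immediately after the lemma $\tau(H)=\langle HD,D\rangle$ with no separate proof, treating it as the obvious one-line consequence you spell out. Your reading of the hypothesis as $\langle H_n x,y\rangle \to \langle Hx,y\rangle$ is also the intended one.
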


\begin{lem}
Suppose that $G$ is ergodic; that $H_n, H \in W^*(\hat{G})$ such that $H_n \to H$ in norm; and that $\phi \in C_c(\mathbb{R})$.  Then
\[ \kappa_{H_n}(\phi) \to \kappa_H(\phi). \]
\end{lem}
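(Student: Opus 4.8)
The plan is to deduce the statement from the identity $\kappa_H(\phi)=\tau(\phi(H))$ (and its analogue for $H_n$) together with the norm-continuity of the continuous functional calculus and the continuity of $\tau$. Since $H$ and each $H_n$ are self-adjoint elements of the von Neumann algebra $W^*(\hat G)$ and $\phi$ is continuous, $\phi(H)$ and $\phi(H_n)$ again lie in $W^*(\hat G)$, so $\tau(\phi(H))$ and $\tau(\phi(H_n))$ are defined. If I can show $\phi(H_n)\to\phi(H)$ in operator norm, then in particular $\phi(H_n)\to\phi(H)$ weakly, so the corollary just proved (weak convergence of operators implies convergence of their traces) gives $\tau(\phi(H_n))\to\tau(\phi(H))$; alternatively, since $\tau$ is a state, one has directly $|\tau(\phi(H_n))-\tau(\phi(H))|\le\|\phi(H_n)-\phi(H)\|\to 0$. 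Either way this is exactly $\kappa_{H_n}(\phi)\to\kappa_H(\phi)$.

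Thus the real work is to verify $\phi(H_n)\to\phi(H)$ in norm. First, because the operator norm is continuous, $\|H_n\|\le\|H\|+1$ for all large $n$, so $R:=\max\{\|H\|,\ \sup_n\|H_n\|\}$ is finite and the spectrum of $H$ and of every $H_n$ is contained in the fixed compact interval $I:=[-R,R]$. Fix $\epsilon>0$. By the Weierstrass approximation theorem pick a polynomial $p$ with $\sup_{t\in I}|\phi(t)-p(t)|<\epsilon/3$. The isometry property of the continuous functional calculus for self-adjoint operators then gives $\|\phi(H_n)-p(H_n)\|\le\epsilon/3$ and $\|\phi(H)-p(H)\|\le\epsilon/3$. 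For the polynomial itself, each map $A\mapsto A^k$ is norm-continuous on the ball of radius $R$ — a telescoping estimate gives $\|A^k-B^k\|\le kR^{k-1}\|A-B\|$ — so $p(H_n)\to p(H)$ in norm, hence $\|p(H_n)-p(H)\|<\epsilon/3$ for all large $n$. Combining these three estimates with the triangle inequality yields $\|\phi(H_n)-\phi(H)\|<\epsilon$ for all large $n$, which is the desired norm convergence.

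The only mildly delicate point is that the polynomial approximation must be chosen uniformly in $n$; this is precisely what the uniform spectral bound (the spectrum of each $H_n$ lies in $I$) supplies, and from there the argument is the standard $3\epsilon$ estimate for continuity of the functional calculus, followed by the cited corollary on continuity of $\tau$. I do not expect any genuine obstacle here: the ergodicity hypothesis on $G$ is carried over from the surrounding context and is not actually needed for this lemma, and $\phi\in C_c(\mathbb{R})$ is likewise more than enough — continuity on a neighborhood of $I$ suffices.
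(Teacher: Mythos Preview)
Your proof is correct and follows essentially the same route as the paper: establish $\phi(H_n)\to\phi(H)$ in norm via a uniform spectral bound, Weierstrass approximation, and a $3\epsilon$ estimate, then invoke the (weak) continuity of $\tau$ from the preceding corollary. Your version is in fact slightly more careful in justifying the uniform bound and the polynomial step, and your remark that the ergodicity hypothesis plays no role here is accurate.
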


\begin{proof}
We first claim that $\phi(H_n) \to \phi(H)$ in norm.  Suppose that $\epsilon > 0$.  The hypotheses imply $H_n$ is bounded, so there exists $M$ with $|H_n| < M$ for all $n$ and $|H| < M$.  We can choose a Weierstrass polynomial $p$ with $|p - \phi|_\infty < \epsilon / 3M$.  Furthermore, since $H_n \to H$, for sufficiently large $n$ we have $|p(H_n) - p(H)| < \epsilon / 3$, and 
\begin{eqnarray*}
	|\phi(H_n) - \phi(H)| & \leq & |\phi(H_n) - p(H_n)| + |p(H_n) - p(H)| + |p(H) - \phi(H)| \\
	& \leq & \epsilon.
\end{eqnarray*}
Since norm convergence is stronger than weak convergence, it follows from the above corollary and the definition of $\kappa$ that $\kappa_{H_n}(\phi) \to \kappa_H(\phi)$.
\end{proof}

\begin{cor}
Let $H_m$ and $H$ be operators such that $H_m$ has finite hopping range for each $m$, $H_m \to H$ in norm, and $\nu'_n \to \nu$.  Then
\[ \kappa_{H}(\phi) = \lim_{m \to \infty} \lim_{n \to \infty} \mathrm{Tr} \left(\nu'_n(\hat{G}^A) \phi(H_m)_{\hat{G}^{x_A}} \right). \]
\end{cor}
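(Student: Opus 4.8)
The plan is to peel off the two limits one at a time, handling the inner limit with Theorem~\ref{big2} and the outer limit with the norm-continuity of the density of states established just above. Fix $m$ first. Since $H_m$ has finite hopping range and $\nu'_n \to \nu$, Theorem~\ref{big2} applies verbatim to $H_m$ in place of $H$, so that the inner limit exists and
\[ \lim_{n\to\infty}\mathrm{Tr}\!\left(\nu'_n(\hat{G}^A)\,\phi(H_m)_{\hat{G}^{x_A}}\right) = \kappa_{H_m}(\phi) = \tau(\phi(H_m)). \]
Hence the right-hand side of the asserted identity equals $\lim_{m\to\infty}\kappa_{H_m}(\phi)$, and it remains only to identify this limit with $\kappa_H(\phi)$.

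For the outer limit I would invoke the preceding Lemma: since $H_m\to H$ in norm and $\phi\in C_c(\mathbb{R})$, one gets $\phi(H_m)\to\phi(H)$ in norm (approximate $\phi$ by a Weierstrass polynomial on a common ball containing the spectra of all the $H_m$ and of $H$), hence weakly, and therefore $\tau(\phi(H_m))\to\tau(\phi(H))$ by the corollary that weak operator convergence forces convergence of the trace—which itself rests on the identity $\langle HD,D\rangle=\tau(H)$. In other words $\kappa_{H_m}(\phi)\to\kappa_H(\phi)$. Chaining the two steps,
\[ \lim_{m\to\infty}\lim_{n\to\infty}\mathrm{Tr}\!\left(\nu'_n(\hat{G}^A)\,\phi(H_m)_{\hat{G}^{x_A}}\right) = \lim_{m\to\infty}\kappa_{H_m}(\phi) = \kappa_H(\phi), \]
which is the claim. (Note that this step uses only the norm-convergence portion of the preceding Lemma, not its ergodicity hypothesis.)

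There is no substantive obstacle; the work is entirely in checking that the hypotheses of the two cited results transfer to the approximants. The points to verify are: each $H_m\in W^*(\hat{G})$ is self-adjoint, as is implicit in the density-of-states setup, so that $\phi(H_m)$ is defined and the polynomial $p(H_m)$ appearing inside the Weierstrass step of Theorem~\ref{big2} again has finite hopping range; the measures $\nu'_n$ converging to $\nu$ are the same for every $m$, so no uniformity in $m$ is required for the inner limit; and, should one wish to read $\mathrm{Tr}(\nu'_n(\hat{G}^A)\phi(H_m)_{\hat{G}^{x_A}})$ leafwise as in the ergodic corollaries, one fixes a single generic basepoint $x$ at the outset, which is harmless since the double limit does not depend on that choice. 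The only genuine limitation of the argument is that, being an iterated limit, it does not by itself produce a diagonal sequence $(H_{m(k)},\nu'_{n(k)})$ realizing $\kappa_H(\phi)$; extracting one would require a rate of convergence for $\|\phi(H_m)-\phi(H)\|$ in $m$, which is not assumed.
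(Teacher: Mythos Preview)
Your proposal is correct and is exactly the argument the paper intends: the corollary is stated without proof in the paper, immediately after Theorem~\ref{big2} and the norm-continuity lemma, and the implicit reasoning is precisely your two-step chain---apply Theorem~\ref{big2} to each $H_m$ for the inner limit, then the preceding lemma for the outer limit. Your side remark that the ergodicity hypothesis in that lemma is not actually used in its proof is also accurate.
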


\subsection{Jumps of the IDS}

One of the main results of Lenz and Veseli\'{c} \cite{lenz2007} relates compactly supported eigenfunctions to jumps of the IDS.  We now have the framework to prove a similar result in our setting.

\begin{thm}\label{big3}
Suppose that $\mathcal{D}$ is discrete, $\hat{G}$ is ergodic, and that $H \in W^*(\hat{G})$ is $G$-invariant and has finite hopping range.  Suppose also that $t \in \mathbb{R}$.  The following are equivalent:

\begin{enumerate}
	\item The density of states $\kappa_H(t) > 0$.
	\item For some $x$, $H|_{\hat{G}^x}$ has an eigenfunction with eigenvalue $t$ supported on some finite patch $A$ with $\nu(G^A) > 0$. 
	\item For almost all $x$, $\ker (H_{\hat{G}^x} - \lambda I)$ is nontrivial and spanned by compactly supported eigenfunctions.
\end{enumerate}
\end{thm}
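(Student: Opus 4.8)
The plan is to recast the three conditions in terms of two projections in $W^*(\hat{G})$ and prove $(1)\Rightarrow(3)\Rightarrow(2)\Rightarrow(1)$. Since $H$ is self-adjoint (as is implicit in the definition of $\kappa_H$), the eigenprojection $P_t:=\chi_{\{t\}}(H)$ lies in $W^*(\hat{G})$ by the Borel functional calculus, is $G$-invariant because $H$ is, and the definition of $\tau$ gives $\kappa_H(t)=\tau(P_t)=\int\langle(P_t)_{\hat{G}^x}\delta_x,\delta_x\rangle\,d\nu(x)=\int\|(P_t)_{\hat{G}^x}\delta_x\|^2\,d\nu(x)$. Alongside it I would use the $G$-invariant subprojection $P_t^{\mathrm{fin}}\le P_t$ whose range on each leaf $\hat{G}^x$ is the closed span of the compactly supported eigenfunctions of $H_{\hat{G}^x}$ with eigenvalue $t$. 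Since $\mathcal{D}$ is discrete, hence finite, and degrees are bounded, there are only countably many decorated finite patches up to isomorphism; for each patch type $A$ the ``total local eigenprojection'' operator has finite hopping range, so its range projection $E_A$ lies in $W^*(\hat{G})$ by Theorem \ref{linearoperators}, and $P_t^{\mathrm{fin}}=\bigvee_A E_A$ does too. Two facts are used throughout: $\tau$ is faithful, being a normal tracial state on the factor $W^*(\hat{G})$ (Corollary \ref{factorergodic}); and, by $G$-invariance and finite hopping range, a function supported in the $r$-interior of a patch $A$ that is an eigenfunction of $H_{\hat{G}^x}$ with eigenvalue $t$ on one triangulation containing $A$ is such an eigenfunction on every triangulation containing $A$.

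For $(2)\Rightarrow(1)$: if $\nu(G^A)>0$ and some $H_{\hat{G}^{x_0}}$ has an eigenfunction of eigenvalue $t$ supported in the $r$-interior of $A$, then by the invariance $\nu_r=\nu_s$ the set of $x\in\hat{G}_0$ whose triangulation contains $A$ has positive measure, and on each such leaf the transported eigenfunction shows $(P_t)_{\hat{G}^x}\ne0$; hence $P_t\ne0$ and $\kappa_H(t)=\tau(P_t)>0$. For $(3)\Rightarrow(2)$: from (3), $(P_t)_{\hat{G}^x}\ne0$ a.e., so $P_t\ne0$ and $\tau(P_t)>0$, i.e. $\|(P_t)_{\hat{G}^x}\delta_x\|^2>0$ on a positive-measure set; for such $x$, $\delta_x$ is not orthogonal to $\ker(H_{\hat{G}^x}-tI)$, which by (3) is spanned by compactly supported eigenfunctions, so some such eigenfunction $\psi$ satisfies $\psi(x)\ne0$. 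Enlarging $\mathrm{supp}\,\psi$ by an $r$-neighborhood and basing the resulting patch at $x$ yields a based patch type; as there are countably many, one type $A$ occurs for a positive-measure set of these $x$, so $\nu(G^A)>0$ and $A$ supports an eigenfunction — that is (2).

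The content is $(1)\Rightarrow(3)$. Nontriviality of the kernel a.e.\ is immediate: $\tau(P_t)>0$ forces $(P_t)_{\hat{G}^x}\ne0$ on a positive-measure set, a leaf-invariant event, which ergodicity promotes to a.e.\ $x$. The remaining claim is $P_t=P_t^{\mathrm{fin}}$, which, since $P_t^{\mathrm{fin}}\le P_t$ and $\tau$ is faithful, amounts to $\tau(P_t)=\tau(P_t^{\mathrm{fin}})$; the inequality $\le$ is trivial. For the reverse I would follow Lenz and Veseli\'{c} \cite{lenz2007}, using as finite models the sphere triangulations $T_n$ underlying a sequence $\nu_n\to\nu$ (e.g.\ those of Definition \ref{spheremeasure}); by Theorem \ref{big2} the empirical spectral measures $\mu_n=\tfrac1{|T_n|}\sum_j\delta_{\lambda_j(H_{T_n})}$ converge weakly to $\kappa_H$, with distribution functions $N_n\to N$ at continuity points. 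First, a compactly supported eigenfunction of $H$ whose support together with an $r$-neighborhood embeds in $T_n$ is an honest eigenvalue-$t$ eigenfunction of $H_{T_n}$ (using that $T_n$, being a closed surface, contributes no boundary term), so $\tfrac1{|T_n|}\dim\ker(H_{T_n}-tI)\ge\tfrac1{|T_n|}\dim V_n$ where $V_n$ is the span of all such functions, and a dimension count along the exhaustion identifies $\lim\tfrac1{|T_n|}\dim V_n$ with $\tau(P_t^{\mathrm{fin}})$. Second, if $N_n\to N$ uniformly then, the $N_n$ being monotone, $\tfrac1{|T_n|}\dim\ker(H_{T_n}-tI)=N_n(t)-N_n(t^-)\to N(t)-N(t^-)=\kappa_H(t)=\tau(P_t)$; combining with the first point gives $\tau(P_t)\ge\tau(P_t^{\mathrm{fin}})$, closing the loop.

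The hard part is the uniform convergence $N_n\to N$: Theorem \ref{big2} only gives convergence against continuous test functions, which cannot separate the atom at $t$ from nearby continuous spectrum. The uniform statement is the analogue, in the groupoid setting, of the Pastur--Shubin-type approximation theorems Lenz and Veseli\'{c} establish for amenable actions, and I expect to obtain it by running their argument with the Reiter functions furnished by the random walk on a recurrent leaf — equivalently, by amenability of $\hat{G}$ — in place of F{\o}lner sets, which is precisely why recurrence is the natural companion to ergodicity here; finite hopping range is what controls the discrepancy between $H_{T_n}$ and the leafwise operators on large balls. The only other nontrivial point is the dimension identity $\tau(P_t^{\mathrm{fin}})=\lim\tfrac1{|T_n|}\dim V_n$, for which overlaps among copies of a patch must be shown to contribute only $o(|T_n|)$; this is routine von Neumann-dimension bookkeeping once the uniform approximation is in hand.
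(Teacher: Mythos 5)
Your reductions $(2)\Rightarrow(1)$ and $(3)\Rightarrow(2)$ are sound and close in spirit to the paper's (the paper treats $(3)\Rightarrow(2)$ as immediate and proves $(2)\Rightarrow(1)$ by transporting the eigenfunction to the positive-measure set $\hat{G}_0^{B_r(x)}$, exactly as you do). The problem is $(1)\Rightarrow(3)$, where your argument has a genuine gap: the entire weight of the proof rests on the uniform convergence $N_n\to N$ of the finite-volume eigenvalue counting functions, and you explicitly defer this to an unproven analogue of the Lenz--Veseli\'{c} Pastur--Shubin approximation theorem. That uniform statement is not a routine transfer: in their setting it requires a Banach-space-valued ergodic theorem along F{\o}lner sets of a group action, whereas here the only convergence available (Theorem \ref{big2}, and the definition of $\nu_n\to\nu$) is against continuous test functions, which, as you yourself note, cannot see the atom at $t$. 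So the step $\tfrac1{|T_n|}\dim\ker(H_{T_n}-tI)\to\kappa_H(t)$ is precisely the assertion you would need to prove and have not; without it the chain $\tau(P_t)\ge\tau(P_t^{\mathrm{fin}})$ does not close. The secondary identity $\tau(P_t^{\mathrm{fin}})=\lim\tfrac1{|T_n|}\dim V_n$ is also only asserted.

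The paper avoids this entirely with a direct operator-algebraic argument that you should compare against. Given any $P>0$ in $W^*(\hat{G})$ and any $R$, a F{\o}lner exhaustion $\Lambda_n$ of a generic leaf (available by amenability and ergodicity) satisfies $\mathrm{Tr}(\chi_{\Lambda_n}P|_{\Lambda_n})\ge\tfrac12\tau(P)|\Lambda_n|$ while $|\Lambda_n\setminus\Lambda_{n,R}|<\tfrac12\tau(P)|\Lambda_n|$ for large $n$; comparing dimensions forces $\mathrm{ran}(\chi_{\Lambda_n}P|_{\Lambda_n})\cap\ell^2(\Lambda_{n,R})\ne\{0\}$. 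Applying this to $Q=P_t-P_t^{\mathrm{fin}}$ with $R$ twice the hopping range produces a nonzero vector $\chi_A Q(f)$ supported in the $R$-interior of a finite patch, and the finite hopping range of $H$ then shows $H\chi_A Q(f)=\lambda\chi_A Q(f)$, i.e.\ a compactly supported eigenfunction in the range of $Q$ --- a contradiction. This yields $P_t=P_t^{\mathrm{fin}}$ without any uniform spectral approximation on spheres. If you want to salvage your route, you would first have to prove the uniform Pastur--Shubin convergence in this groupoid setting as a separate theorem; the F{\o}lner dimension count is the shorter path and is the one the paper takes.
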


\begin{proof}
The implication $(3) \Rightarrow (2)$ is trivial.  To prove $(2) \Rightarrow (1)$, suppose that (2) holds.  Let $f$ be a compactly supported eigenfunction on $G^x$ with eigenvalue $t$.  Since $H$ has finite hopping range, there exists a radius $r$ such that $Hf|_{\hat{G}^x} = Hf|_{\hat{G}^y}$ for any $y \in \hat{G}_0^{B_r(x)}$.

Define $u(x, y) = \frac{\chi_{B_r(x)}(y)}{|B_r(x)|}$.  For each $y \in \hat{G}_0^{B_r(x)}$, $H$ has $f$ as an eigenfunction, and hence $\mathrm{Tr}(H|_{A_y}) > 0$.  Because $\hat{G}_0^{B_r(x)}$ has positive measure, (1) follows.  It only remains to prove $(1) \Rightarrow (3)$.

For $n \in \mathbb{N}$, let $A_n$ denote the $n$th interior of $A$; that is, the set of $x \in A$ with $B_n(x) \subset A$.

\begin{lem}
Let $P \in W^*(\hat{G})$ with $P > 0$ and let $R > 0$.  Then there exists a finite patch $A$ such that $\mathrm{ran}(\chi_A P|_{\hat{G}^A}) \cap \ell^2(\hat{G}_0^{A_R}) \not= \{0\}$.
\end{lem}

\begin{proof}
Since $P > 0$, $\tau(P) > 0$.  Because $P$ is bounded, we may assume without loss of generality that $|P| = 1$.  By amenability and ergodicity, for almost every $x$, $\hat{G}^x$ has a F{\o}lner exhaustion $\Lambda_n$, and $\lim_{n \to \infty} \frac{\mathrm{Tr}(\chi_{\Lambda_n} P|_{\Lambda_n})}{|\Lambda_n|} = \tau(P)$.  Choose $x$ such that this is true.  For $n$ sufficiently large,
\[ \mathrm{Tr}(\chi_{\Lambda_n} P|_{\Lambda_n}) \geq \frac{1}{2} \tau(P) |\Lambda_n| \]
and, since $\Lambda_n$ is a F{\o}lner exhaustion,
\[ |\Lambda_n \setminus \Lambda_{n, R}| < \frac{1}{2} \tau(P) |\Lambda_n|. \]
This implies
\begin{eqnarray*}
|\Lambda_n \setminus \Lambda_{n, R}| & < & \mathrm{Tr}(\chi_{\Lambda_n} P|_{\Lambda_n}) \\
& \leq & \dim( \mathrm{ran}(\chi_{\Lambda_n} P|_{\Lambda_n}));
\end{eqnarray*}
that is,
\begin{eqnarray*}
\dim( \mathrm{ran}(\chi_{\Lambda_n} P_|{\Lambda_n})) & \geq & \|\Lambda_n\| - \|\Lambda_{n, R}\| \\
& = & \dim(\ell^2(\Lambda_n)) - \dim(\ell^2(\Lambda_{n, R})),
\end{eqnarray*}
implying that
\[ \mathrm{ran}(\chi_{\Lambda_n} P|_{\Lambda_n}) \cap \ell^2(\Lambda_{n, R}) \not= \{0\}. \]
\end{proof}

\begin{lem}
Let $H \in W^*(\hat{G})$ have finite hopping range and let $\lambda \in \mathbb{R}$.  Define $P = E_H(\{\lambda\})$ and let $P_c$ be the projection of $P$ onto the subspace generated by compactly supported eigenfunctions of $H$.  Then $P = P_c$.
\end{lem}

\begin{proof}
Suppose not.  Define $Q = P - P_c$.  Then $Q > 0$.  Define $R$ to be twice the hopping range of $H$ and apply the previous lemma, so that $A$ is a finite patch and we can choose a nonzero $f \in \ell^2(\hat{G}_0^A)$ with $\chi_A Q(f) \in \ell^2(\hat{G}_0^{A_R}) \setminus \{0\}$.  We note that
\begin{eqnarray*}
\lambda \chi_A Q(f) & = & \chi_A \lambda Q(f) \\
& = & \chi_A H Q(f) \\
& = & \chi_A (H \chi_A Q(f)) + \chi_A (H \chi_{A^c} Q(f)).
\end{eqnarray*}
Since $\chi_A Q(f) \in \ell^2(\hat{G}_0^{A_R})$, $H \chi_A Q(f) \in \ell^2(\hat{G}_0^{A_{R/2}})$ since $H$ has hopping range $R/2$.  Again using the hopping range of $H$, $H \chi_{A^c} Q(f) \in \ell^2(\hat{G}_0^{A_{R/2}^c})$.  But $\lambda \chi_A Q(f) \in \ell^2(\hat{G}_0^{A_R})$, and hence the second term is 0, giving us
\begin{eqnarray*}
\lambda \chi_A Q(f) & = & \chi_A (H \chi_A Q(f)) \\
& = & H \chi_A Q(f).
\end{eqnarray*}
This means that $\chi_A Q(f)$ is a compactly supported eigenfunction of $H$, contradicting the definition of $Q$.
\end{proof}

We can now prove $(1) \Rightarrow (3)$.  If (1) is true, then $E_H(\{\lambda\})$ is nontrivial, and hence $\ker (H_{\hat{G}^x} - \lambda I)$ is nontrivial.  The previous lemma implies that $\ker (H_{\hat{G}^x} - \lambda I)$ is spanned by compactly supported eigenfunctions.
\end{proof}

\subsection{Approximate eigenfunctions}

Our goal is to study some properties of operators on $\hat{G}$ by examining them on sphere triangulations.  One way to do so is by relating a property to another property that can be described in terms of finite patches.

\begin{defi}	[Spectrally localized function]
Let $H \in W^*(\hat{G})$, $x \in G^0$, and $f \in L^2(\hat{G}^x)$.  Let $\phi: \mathbb{R} \to [0, 1]$ and $\xi, \delta \in \mathbb{R}$.  We say that $f$ is $(H, \xi, \delta, \epsilon)$-spectrally localized if
\[ \| (1 - \chi_{[\xi - \delta, \xi + \delta]})H(f) \leq \epsilon \|f\|. \| \]
\end{defi}

\begin{defi} [Approximate eigenfunction]
Let $H \in W^*(\hat{G})$ and let $\xi \in \mathbb{C}$.  Let $f \in L^2(G^x) \setminus \{0\}$.  If $\|(Hf - \xi f)\|_2 < \zeta \|f\|_2$, we say that $f$ is a $\zeta$-approximate eigenfunction for $H$ with eigenvalue $\xi$ supported on $A_r$.
\end{defi}

\begin{lem}
Let $H \in W^*{\hat{G}}$ be self-adjoint and let $x \in G^0$ and $\xi \in \mathbb{R}$.  Let $\delta > 0$.  Suppose that $f \in L^2(G^x)$ such that $\|f\|_{L^2(G^x)} = 1$. Then:
\begin{enumerate}
	\item If $f$ is an $\epsilon \delta$-approximate eigenfunction, then $f$ is $(H, \xi, \delta, \epsilon)$-spectrally localized.
	\item If $f$ is $(H, \xi, \delta, \epsilon)$-spectrally localized, then $f$ is a $\delta + \|H\| \epsilon$-approximate eigenfunction.
\end{enumerate}  
\end{lem}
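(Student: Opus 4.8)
The plan is to deduce both implications directly from the spectral theorem applied to the restriction $H|_{G^x}$, which is a bounded self-adjoint operator on $L^2(G^x)$ since $H \in W^*(\hat{G})$ is self-adjoint. Let $E_H$ denote its projection-valued spectral measure, and for the fixed $f$ with $\|f\| = 1$ set $\mu_f(S) = \langle E_H(S) f, f \rangle$; this is a Borel probability measure supported on $\sigma(H) \subseteq [-\|H\|, \|H\|]$. Writing $I = [\xi - \delta, \xi + \delta]$, $P = E_H(I)$, and $Q = 1 - P = E_H(\mathbb{R} \setminus I)$, the Borel functional calculus supplies the identities I will use throughout:
\[ \|(H - \xi) f\|^2 = \int (\lambda - \xi)^2 \, d\mu_f(\lambda), \qquad \|Qf\|^2 = \mu_f(\mathbb{R} \setminus I), \qquad \|Pf\|^2 = \mu_f(I). \]
Note that the operator $1 - \chi_{[\xi-\delta,\xi+\delta]}(H)$ appearing in the definition of spectral localization is exactly $Q$, so that condition reads $\|Qf\| \le \epsilon \|f\|$.

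For part (1), observe that on $\mathbb{R} \setminus I$ we have $(\lambda - \xi)^2 \geq \delta^2$, so the first identity gives $\|(H - \xi) f\|^2 \geq \delta^2 \mu_f(\mathbb{R} \setminus I) = \delta^2 \|Qf\|^2$. If $f$ is an $\epsilon\delta$-approximate eigenfunction, then $\|(H - \xi) f\| < \epsilon\delta$, whence $\|Qf\| < \epsilon = \epsilon \|f\|$, which is precisely the assertion that $f$ is $(H, \xi, \delta, \epsilon)$-spectrally localized. For part (2), I would instead split the integral at $I$:
\[ \|(H - \xi) f\|^2 = \int_I (\lambda - \xi)^2 \, d\mu_f + \int_{\mathbb{R} \setminus I} (\lambda - \xi)^2 \, d\mu_f \leq \delta^2 \mu_f(I) + \|H\|^2 \mu_f(\mathbb{R} \setminus I), \]
using $(\lambda - \xi)^2 \leq \delta^2$ on $I$ for the first term and the bound $|\lambda - \xi| \leq \|H\|$ on the spectrum for the second. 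Since $f$ is $(H, \xi, \delta, \epsilon)$-spectrally localized we have $\mu_f(\mathbb{R} \setminus I) = \|Qf\|^2 \leq \epsilon^2$, and $\mu_f(I) \leq 1$, so $\|(H - \xi) f\|^2 \leq \delta^2 + \|H\|^2 \epsilon^2 \leq (\delta + \|H\| \epsilon)^2$; taking square roots shows $f$ is a $(\delta + \|H\|\epsilon)$-approximate eigenfunction.

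There is no substantive obstacle here: the lemma is bookkeeping around the spectral theorem. The points that merit care are that $H|_{G^x}$ is genuinely a bounded self-adjoint operator, so that the functional calculus is available leafwise (immediate from $H \in W^*(\hat{G})$ together with self-adjointness); the estimate $|\lambda - \xi| \leq \|H\|$ in part (2), which is what one gets when $\xi$ sits at the center of the spectral band and which in general should be read as bounding the operator norm $\|(H - \xi)Q\|$, at worst changing the constant in front of $\|H\|$ harmlessly; and the fact that the ``supported on $A_r$'' clause in the definition of approximate eigenfunction is inherited verbatim from $f$ and plays no role in the estimates. The strict-versus-nonstrict inequalities also match up trivially.
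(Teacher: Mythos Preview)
Your argument is correct and is essentially the paper's own proof, recast through the scalar spectral measure $\mu_f$ rather than through operator inequalities from the functional calculus; the paper reduces to $\xi=0$, uses the pointwise bound $1-\chi_{[-\delta,\delta]}(t)\le |t|/\delta$ for part~(1), and splits $f=\chi_{[-\delta,\delta]}(H)f+(1-\chi_{[-\delta,\delta]})(H)f$ for part~(2), which is exactly your decomposition $Pf+Qf$. Your remark about $|\lambda-\xi|\le\|H\|$ versus $\|H-\xi\|$ is well taken and applies equally to the paper's ``without loss of generality $\xi=0$'' step: the constant in the stated bound is really $\|H-\xi\|$ after unwinding the reduction.
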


\begin{proof}
(1) Assume without loss of generality that $\xi = 0$.  Because $(1 - \chi_{[\xi - \delta, \xi + \delta]})(t) \leq t / \delta$ for all $t$, we have
\begin{eqnarray*}
\| (1 - \chi_{[\xi - \delta, \xi + \delta]}) H\|_{G^x}(f) & \leq & \|Hf\| / \delta \\
& < & \epsilon / \delta.	
\end{eqnarray*}

(2) Again assume without loss of generality that $\xi = 0$.  Because $(1 - \chi_{[\xi - \delta, \xi + \delta]})(t) \leq t / \delta$ for all $t$, we have
\begin{eqnarray*}
\|Hf\| & = & \| H (\phi H(f) + (1 - \chi_{[\xi - \delta, \xi + \delta]}) H(f))\| \\
& \leq & \|H \phi H(f)\| + \|H (1 - \chi_{[\xi - \delta, \xi + \delta]}) H(f) \| \\
& \leq & \delta + \|H\| \epsilon.
\end{eqnarray*}
\end{proof}

\begin{cor}
Let $H \in W^*{\hat{G}}$ be self-adjoint with finite hopping range $r$ and let $\xi \in \mathbb{R}$.  Let $R > r$ and $\delta > 0$.  Then

\begin{eqnarray*}
& & P(f|_{G^x} \mbox{ has a } B_{R - r}- \mbox{supported } \epsilon \delta\mbox{- approximate eigenfunction}) \\
& \leq & P(f|_{G^x} \mbox{ has a } B_{R - r}- \mbox{supported } (H, \xi, \delta, \epsilon)-\mbox{ spectrally localized function}) \\
& \leq & P(f|_{G^x} \mbox{ has a } B_{R - r}- \mbox{supported } \delta + \|H\| \epsilon\mbox{ -approximate eigenfunction}).
\end{eqnarray*}
\end{cor}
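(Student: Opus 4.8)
The plan is to obtain the corollary as an immediate consequence of the preceding lemma, together with monotonicity of the probability measure $P$. That lemma already records the two pointwise implications we need between the notions of ``approximate eigenfunction'' and ``spectrally localized function''; what remains is to repackage them as a nesting of events and to observe that these events are measurable.

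First I would note that the preceding lemma applies to any nonzero candidate witness after normalization. If $g \in L^2(G^x) \setminus \{0\}$ is supported in $B_{R - r}(x)$ and is an $\epsilon\delta$-approximate eigenfunction of $H$ for eigenvalue $\xi$, then $g / \|g\|$ has unit norm, is still supported in $B_{R - r}(x)$, and is still an $\epsilon\delta$-approximate eigenfunction, since both the defining inequality and the support condition are unchanged under scaling; part (1) of the lemma then makes $g / \|g\|$, hence $g$ itself, $(H, \xi, \delta, \epsilon)$-spectrally localized. Symmetrically, part (2) of the lemma turns a $B_{R - r}(x)$-supported $(H, \xi, \delta, \epsilon)$-spectrally localized function into a $B_{R - r}(x)$-supported $(\delta + \|H\| \epsilon)$-approximate eigenfunction, with the same underlying vector and therefore the same support. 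Thus, for every pointed triangulation $x$, the first of the three conditions in the statement implies the second and the second implies the third; writing $E_1, E_2, E_3$ for the corresponding events we get $E_1 \subseteq E_2 \subseteq E_3$.

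Next I would verify that $E_1, E_2, E_3$ lie in the domain of $P$. Because $H$ has finite hopping range $r$ and $R > r$, the action of $H$ on functions supported in $B_{R - r}(x)$ is determined by the decorated isomorphism type of the patch $B_R(x)$, so each of the three properties of ``having a suitable $B_{R-r}$-supported witness'' depends only on $B_R(x)$; hence each $E_i$ is a union of sets of the form $\hat{G}_0^A$ over radius-$R$ patches $A$, and such sets generate the $\sigma$-algebra on which $P$ is defined. For the spectrally localized condition one also uses that, on a fixed finite patch, the operator $(1 - \chi_{[\xi - \delta, \xi + \delta]})(H)$ restricted to the finite-dimensional space of vectors supported in $B_{R - r}$ depends Borel-measurably on the decorations in $\prod_{E(A)} \mathcal{D}$, so the existence of a witness is a Borel condition. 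With $E_1 \subseteq E_2 \subseteq E_3$ measurable, monotonicity of $P$ yields $P(E_1) \le P(E_2) \le P(E_3)$, the asserted chain of inequalities. The inequalities are thus immediate from the lemma; the only point requiring any care is the locality and measurability bookkeeping just described, and even that is routine given the finite hopping range of $H$.
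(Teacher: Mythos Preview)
Your proposal is correct and matches the paper's approach: the paper gives no explicit proof for this corollary, treating it as immediate from the preceding lemma, and you simply supply the details (the pointwise implications yield $E_1 \subseteq E_2 \subseteq E_3$, then apply monotonicity of $P$). The measurability discussion you add is extra care the paper does not spell out; note, though, that only the outer two events are genuinely determined by the radius-$R$ patch (as the paper itself remarks just after the corollary), so your claim that all three $E_i$ are unions of radius-$R$ cylinder sets slightly overreaches for $E_2$, even if measurability of $E_2$ can still be argued separately.
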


Since the left and right sides of the inequality depend only on the patch densities of patches of radius $R$, this gives a means by which we might estimate the frequency of compactly supported, spectrally localized functions.  We might also compute limits as, for example, $R \to \infty$, $\delta = C R^u$, and $\epsilon = D R^v$ for some $C, D > 0$ and $u, v < 0$.

\begin{cor}
Let $u, v \leq 0$ and $C, D > 0$.  Then
\begin{eqnarray*}
	& & \lim_{R \to \infty} P(f|_{G^x} \mbox{ has a } B_{R - r}- \mbox{supported } CD R^{u+v} \mbox{- approximate eigenfunction}) \\
	& \leq & \lim_{R \to \infty} P(f|_{G^x} \mbox{ has a } B_{R - r}- \mbox{supported } (H, \xi, \delta, D R^v)-\mbox{ spectrally localized function}) \\
	& \leq & \lim_{R \to \infty} P(f|_{G^x} \mbox{ has a } B_{R - r}- \mbox{supported } C R^u + \|H\| D R^v\mbox{ -approximate eigenfunction}).
\end{eqnarray*}
\end{cor}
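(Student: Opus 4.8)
The plan is to deduce this statement directly from the preceding corollary by the substitution $\delta = CR^u$ and $\epsilon = DR^v$. For each fixed $R > r$, the preceding corollary applies with these choices of $\delta$ and $\epsilon$: the left-hand tolerance becomes $\epsilon\delta = CDR^{u+v}$ and the right-hand tolerance becomes $\delta + \|H\|\epsilon = CR^u + \|H\|DR^v$, so the three-term chain of probability inequalities is exactly the claimed one with $R$ held fixed. The corollary is then obtained by letting $R \to \infty$; since $u, v \leq 0$ both tolerance parameters stay bounded, and passing to a limit preserves non-strict inequalities. So essentially all of the analytic content is already contained in the preceding corollary, and what remains is bookkeeping about the limiting behaviour.

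The one point that deserves a word of justification is the existence of the three limits, since the events in question are not obviously monotone in $R$ (enlarging $R$ enlarges the support ball, which increases the probability, but simultaneously shrinks the tolerance when $u+v<0$, which decreases it). In the special case $u = v = 0$ the tolerances do not depend on $R$, and the events ``$f|_{G^x}$ has a $B_{R-r}$-supported $\zeta$-approximate eigenfunction'' (and likewise for spectrally localized functions) are nondecreasing in $R$ because $B_{R-r} \subseteq B_{R'-r}$ for $R' > R$; hence each probability is a bounded monotone sequence and converges, to the measure of the corresponding increasing union. For general $u, v \leq 0$ I would phrase the conclusion as holding with $\liminf$ on the left-hand side, any subsequential limit point of the middle term, and $\limsup$ on the right-hand side — this is exactly what a pointwise chain $a_R \leq b_R \leq c_R$ forces, since for a subsequence $R_k$ with $b_{R_k} \to b^{*}$ one has $\liminf_R a_R \leq \liminf_k a_{R_k} \leq b^{*} \leq \limsup_k c_{R_k} \leq \limsup_R c_R$. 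Alternatively, one may invoke the observation recorded after the preceding corollary that each of these probabilities is determined by the patch densities of radius-$R$ patches, so that under the standing hypotheses on $\nu$ the limits exist outright. Under either reading the displayed inequalities persist. I do not anticipate any real obstacle: the statement simply records the behaviour of the preceding corollary in the natural scaling regime $\delta \sim CR^u$, $\epsilon \sim DR^v$, which is the regime relevant to estimating frequencies of compactly supported, spectrally localized functions as $R \to \infty$.
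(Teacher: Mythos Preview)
Your approach is correct and matches the paper's: the paper gives no separate proof of this corollary, presenting it as the immediate consequence of substituting $\delta = CR^u$, $\epsilon = DR^v$ into the preceding corollary and passing to the limit, exactly as you describe. Your additional care about the existence of the limits in fact goes beyond what the paper records; the paper's only remark in this direction is the sentence preceding the corollary that the outer probabilities depend only on patch densities of radius-$R$ patches.
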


Let $A$ be a decorated discrete finite patch.  Suppose that $f$ is an $\eta$-approximate eigenfunction of $H$ with eigenvalue $\xi$ supported on $A_r$.  Let $\delta, \iota > 0$ and $\phi = \chi[\xi - \delta, \xi + \delta]$.

\begin{cor}
Suppose that $B$ is an exact decorated discrete finite patch containing $k$ disjoint copies of $A$.  On each copy of $A$ we have a copy of $f$, which we can denote $f_1, \ldots, f_k$.  Then $f_i$ is $(H, \phi, \eta / \delta)$-spectrally localized, and
\[ \langle f_i, \phi H(f_i) \rangle \geq 1 - \frac{\eta}{\delta}. \]
\end{cor}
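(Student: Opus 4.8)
The plan is to first promote each copy $f_i$ to an $\eta$-approximate eigenfunction of $H$ with eigenvalue $\xi$, and then obtain both assertions from the earlier lemma relating approximate eigenfunctions to spectrally localized functions, together with elementary projection geometry. For the first step, write $g_i : A \to B$ for the simplicial embedding realizing the $i$th copy of $A$ inside $B$ and set $f_i = f \circ g_i^{-1}$, extended by zero (so $\|f_i\|_2 = \|f\|_2 = 1$). Because $f$ is supported on the $r$-interior $A_r$, where $r$ is the hopping range of $H$, the value of $Hf$ at any point within distance $r$ of $\mathrm{supp}\, f$ is determined solely by the decorated combinatorial type recorded in $A$: the two defining clauses of hopping range — that $H\delta_x$ is supported in $B_r(x)$ and that it depends only on the decorated isomorphism type of $B_r(x)$ — are exactly what is needed here. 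Since $B$ is exact, each copy $g_i(A) \subseteq B$ carries the same decorated type as $A$, independently of $i$ and of the triangulation containing $B$; hence $Hf_i$ is supported in $g_i(A)$ and is the $g_i$-transport of $Hf$. Consequently $\|Hf_i - \xi f_i\|_2 = \|Hf - \xi f\|_2 < \eta\|f\|_2 = \eta\|f_i\|_2$, so each $f_i$ is an $\eta$-approximate eigenfunction.

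Applying part (1) of that lemma — which turns an $\epsilon\delta$-approximate eigenfunction into an $(H,\xi,\delta,\epsilon)$-spectrally localized one — with $\epsilon = \eta/\delta$, legitimate since $\eta = (\eta/\delta)\,\delta$, shows that $f_i$ is $(H,\xi,\delta,\eta/\delta)$-spectrally localized; since $\phi = \chi_{[\xi-\delta,\xi+\delta]}$, this is precisely the statement that $f_i$ is $(H,\phi,\eta/\delta)$-spectrally localized, i.e.\ $\|(1-\phi(H))f_i\|_2 \le \eta/\delta$. For the second assertion, $H$ is self-adjoint and $\phi$ is the indicator of a closed interval, so $\phi(H) = E_H([\xi-\delta,\xi+\delta])$ is an orthogonal projection; decomposing $f_i = \phi(H)f_i + (1-\phi(H))f_i$ into orthogonal pieces gives
\[ \langle f_i, \phi(H)f_i \rangle = \|\phi(H)f_i\|_2^2 = \|f_i\|_2^2 - \|(1-\phi(H))f_i\|_2^2 \ge 1 - (\eta/\delta)^2 \ge 1 - \frac{\eta}{\delta} \]
whenever $\eta \le \delta$, while for $\eta > \delta$ the right-hand side is negative and the inequality is trivial because $\phi(H) \ge 0$.

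I expect the only genuine obstacle to be the first step: making rigorous that a copy of an approximate eigenfunction whose support lies in the $r$-interior of a fixed decorated patch is again an approximate eigenfunction. This is exactly where finite hopping range and the exactness of $B$ are both used, since together they guarantee that the computation of $Hf_i$ on a neighborhood of its support sees only data internal to $g_i(A)$, so that it is literally the transported computation of $Hf$. Everything after that is bookkeeping.
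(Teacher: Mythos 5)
Your proof is correct and follows the route the paper clearly intends (the paper states this corollary without proof): transport $f$ to each copy of $A$, use finite hopping range together with the exactness of $B$ and the support of $f$ in the $r$-interior to conclude each $f_i$ is still an $\eta$-approximate eigenfunction, then invoke part (1) of the preceding lemma with $\epsilon = \eta/\delta$ and finish with the orthogonal decomposition $\|f_i\|^2 = \|\phi(H)f_i\|^2 + \|(1-\phi(H))f_i\|^2$. Your bound $1-(\eta/\delta)^2$ is in fact slightly sharper than the stated $1-\eta/\delta$, and you correctly identify the transport step as the only place where hopping range and exactness are genuinely used.
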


\section{Directionally invariant functions and the discrete Laplacian}

Motivated by physics, we are interested in studying properties of the Laplacian and related operators.  In order to work with the discrete spaces $\hat{G}$ and $\hat{G}_0$, we need to consider discrete analogues to these operators. Fabila Carrasco et al.\,\cite{fabila} constructed a discrete magnetic Laplacian on graphs; we will adapt this construction to our setting.

Our choice of using tangent vectors to construct $\hat{G}$ may seem inconvenient when constructing operators.  However, the following mechanism allows us to consider operators on functions that essentially ignore the directions of tangent vectors.

\begin{defi} [Directionally invariant function]
A function $f \in L^p(\hat{G})$ is directionally invariant if $f(x_1, x_2, y_1, y_2) = f(x_1, x_3, y_1, y_3)$ for all $x_k, y_k$ such that $(x_1, x_2, y_1, y_2), (x_1, x_3, y_1, y_3) \in \hat{G}$.  We denote the subspace of directionally invariant functions by $L^p_{DI}(\hat{G})$.
\end{defi}

\begin{lem}
If $f$ and $g$ are directionally invariant functions such that $f * g$ is defined, then $f * g$ is directionally invariant.
\end{lem}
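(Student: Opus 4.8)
The plan is to substitute directly into the formula for the leafwise convolution and then use directional invariance of each factor separately. Recall that, writing tangent vectors in components, the convolution of functions $f$ and $g$ on $\hat{G}$ is
\[ (f * g)(x_1, x_2, z_1, z_2) = \int f(x_1, x_2, y_1, y_2)\, g(y_1, y_2, z_1, z_2)\, d\lambda^x(y), \]
where $y = (y_1, y_2)$ ranges over the directed edges of the triangulation $T$ underlying the point, against the leafwise measure $\lambda^x$ (counting measure, in the discrete case); $f * g$ is defined wherever this integral converges absolutely.

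The first step is to fix, on a triangulation $T$, a vertex $x_1$ with neighbours $x_2, x_3$ and a vertex $z_1$ with neighbours $z_2, z_3$, and to observe that the convolution integrals computing $(f * g)(x_1, x_2, z_1, z_2)$ and $(f * g)(x_1, x_3, z_1, z_3)$ are taken over exactly the same set — the directed edges of $T$ — against exactly the same measure, since the leafwise measure is intrinsic to $T$ and is insensitive to which tangent vectors of $T$ are marked. Thus replacing the head $x_2$ by $x_3$ and the head $z_2$ by $z_3$ changes neither the domain nor the measure of integration.

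The second step is to compare the integrands pointwise in $y$: directional invariance of $f$ gives $f(x_1, x_3, y_1, y_2) = f(x_1, x_2, y_1, y_2)$, and directional invariance of $g$ gives $g(y_1, y_2, z_1, z_3) = g(y_1, y_2, z_1, z_2)$. Hence the integrand of $(f * g)(x_1, x_3, z_1, z_3)$ equals that of $(f * g)(x_1, x_2, z_1, z_2)$, and combined with the first step this yields $(f * g)(x_1, x_3, z_1, z_3) = (f * g)(x_1, x_2, z_1, z_2)$, i.e. $f * g \in L^p_{DI}(\hat{G})$.

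There is no real obstacle; the only point needing care is the observation in the first step that $\lambda^x$ depends only on the underlying triangulation and not on the marked vectors, so that changing heads is harmless. Equivalently, one can phrase the whole argument by noting that a directionally invariant function is of the form $f(x_1, x_2, y_1, y_2) = \tilde{f}(x_1, y_1)$ for a function $\tilde{f}$ on pairs of vertices, so that $(f * g)(x, z) = \sum_{y_1} \deg(y_1)\, \tilde{f}(x_1, y_1)\, \tilde{g}(y_1, z_1)$ manifestly depends only on the tails $x_1$ and $z_1$.
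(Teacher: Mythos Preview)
Your proof is correct and follows exactly the same route as the paper's: write out the convolution integral, use directional invariance of $f$ and $g$ to swap $x_2 \leftrightarrow x_3$ and $z_2 \leftrightarrow z_3$ in the integrand, and observe that the leafwise measure is unchanged. The paper compresses all of this into a three-line display; you have simply made explicit the point (left tacit there) that $\lambda_{(x_1,x_2)} = \lambda_{(x_1,x_3)}$ because both are the counting measure on the same leaf.
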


\begin{proof}
We have
\begin{eqnarray*}
	f * g (x_1, x_2, z_1, z_2) & = & \int f(x_1, x_2, y_1, y_2) g(y_1, y_2, z_1, z_2) \,d\lambda_{(x_1, x_2)}(y_1, y_2) \\
	& = & \int f(x_1, x_3, y_1, y_2) g(y_1, y_2, z_1, z_3) \,d\lambda_{(x_1, x_3)}(y_1, y_2) \\
	& = & f * g (x_1, x_3, z_1, z_3).
\end{eqnarray*}
\end{proof}

\begin{defi}[Directional symmetrization of a function]
	Let $f \in L^2(\hat{G})$.  Define $N(x)$ to be the set of vertices neighboring a vertex $x$.  The directional symmetrization of $f$ is
	\[ \tilde{f}(x_1, x_2, y_1, y_2) = \frac{1}{|N(x_1)| |N(y_1)|} \sum_{\stackrel{x_3 \in N(x_1)}{y_3 \in N(y_1)}} f(x_1, x_3, y_1, y_3). \]
	By construction, $\tilde{f}$ is directionally invariant, and $\tilde{f} = f$ when $f$ is directionally invariant.
\end{defi}

\begin{defi}[Extension of an operator on directionally invariant functions]
	Let $H \in B(L^2_{DI}(\hat{G}))$.  Define
	\[ \tilde{H}(f) = H(\tilde{f}). \]
	Then $\tilde{H}$ is an operator on $L^2(\hat{G})$ that agrees with $H$ on $L^2_{DI}(\hat{G})$ (in particular, eigenfunctions of $H$ are also eigenfunctions of $\tilde{H}$).
\end{defi}

We will construct the discrete Laplacian as an operator on directionally invariant functions.

\begin{defi}[The discrete Laplacian with a potential and magnetic field]
	Let $\mathcal{D} = \mathbb{R}^4$ and denote the components $D(x_1, x_2) = (w(x_1), \bar{w}(x_1, x_2), V(x_1), \alpha(x_1, x_2))$.  Furthermore, choose $\nu$ such that, for almost all $(x_1, x_2)$, $w$ and $V$ depend only on $x_1$, $\bar{w}$ is symmetric, and $\alpha$ is antisymmetric.
	
We think of $w$ as the vertex weight and $\bar{w}$ as the edge weight.  We could construct $\nu$ so that these weights are chosen either deterministically based on the surrounding geometry (one natural choice is $w(x_1) = \deg(x_1)$ and $\bar{w}(x_1, x_2) = 1$) or randomly.
We define
\[ \Delta_{\mathrm{disc}, V, B} f(x, z) = \frac{1}{w(x_1)} \sum_{y_1 \in N(x_1)} \bar{w}(x_1, y_1) \left( e^{i \alpha(x_1, y_1)} f(y_1, z_1) \right) + (V(x_1) - 1) f(x_1, z_1). \]
\end{defi}

When $\alpha$ and $V$ are everywhere 0, this reduces to 
\[ \Delta_{\mathrm{disc}} f(x_1, z_1) = \frac{1}{w(x_1)} \sum_{y_1 \in N(x_1)} \bar{w}(x_1, y_1) \left( f(y_1, z_1) - f(x_1, z_1) \right). \]

In the case where $\alpha = V = 0$, $w(x) = \deg(x)$, and $\bar{w} = 1$ everywhere, Figure \ref{eigen} shows an eigenfunction of $\Delta_{\mathrm{disc}}$.

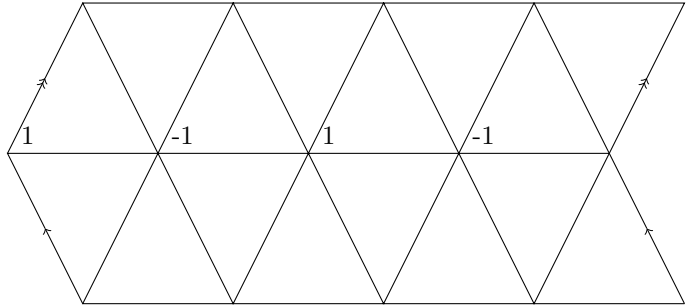
\begin{figure}
\centering
\begin{tikzpicture}
\draw [->] (1, 0) -- (0.5, 1);
\draw (0.5, 1) -- (0, 2) -- (2, 2) -- (1, 0) -- (3, 0) -- (2, 2) -- (4, 2) -- (3, 0) -- (5, 0) -- (4, 2) -- (6, 2) -- (5, 0) -- (7, 0) -- (6, 2) -- (8, 2) -- (7, 0) -- (9, 0);
\draw [->] (9, 0) -- (8.5, 1);
\draw (8.5, 1) -- (8, 2) -- (7, 4) -- (9, 4) -- (8.5, 3);
\draw [->>] (8, 2) -- (8.5, 3);
\draw [->>] (0, 2) -- (0.5, 3);
\draw (0.5, 3) -- (1, 4) -- (2, 2) -- (3, 4) -- (4, 2) -- (5, 4) -- (6, 2) -- (7, 4) -- (1, 4);
\node[above right] at (0, 2) {\,1};
\node[above right] at (2, 2) {\,-1};
\node[above right] at (4, 2) {\,1};
\node[above right] at (6, 2) {\,-1};
\end{tikzpicture}
\caption{Example of a compactly supported eigenfunction of $\Delta_{\mathrm{disc}}$ with eigenvalue $-4/3$.  The function takes the value 0 on all unlabeled vertices.} \label{eigen}
\end{figure}

\section{Questions}

\begin{itemize}
\item Our amenability result used the result of Benjamini and Schramm \cite{benjamini} which requires a bound on the degrees of vertices.  If we allow vertices to have unbounded degrees, can we still ensure amenability?
\item Do any of our results hold in higher dimensions?  (Again, Benjamini and Schramm \cite{benjamini} would not apply.)
\item Do Conjectures \ref{thisconjecture}, \ref{thatconjecture}, and \ref{theotherconjecture} hold?
\item In the case of a measure $\nu$ derived from a generalized substitution tiling, can we prove results similar to those that have been proven for substitution tilings?
\item Can we determine any stronger results about ``convergence'' of eigenvectors; that is, results about properties of eigenvectors or near-eigenvectors of an operator on $G$ by examining analogous operators on spheres?
\item Can we say anything specific about $\Delta_{\mathrm{disc}}$ or any other interesting operator for any particular choice of $\nu$?  In particular, can we prove any results analogous to those of Fabila Carrasco et al.\,\cite{fabila}?
\end{itemize}

\section{Acknowledgements}

This paper is based on a dissertation written under the supervision of Eric Babson.  The author is grateful to him and to Jerry Kaminker for their guidance and support.

\bibliographystyle{jncg}

\bibliography{triangulations8}

\end{document}